\DeclareMathOperator{\Dom}{Dom}
\DeclareMathOperator{\Ran}{Ran}
\DeclareMathOperator{\Ker}{Ker}
\DeclareMathOperator{\supp}{supp}
\DeclareMathOperator{\Tr}{Tr}
\DeclareMathOperator{\sech}{sech}
\DeclareMathOperator{\Span}{span}
\newcommand{\abs}[1]{\lvert#1\rvert}
\newcommand{\Abs}[1]{\left\lvert#1\right\rvert}
\newcommand{\norm}[1]{\lVert#1\rVert}
\newcommand{\jap}[1]{\langle#1\rangle}
\newcommand{\comp}{{\mathrm{comp}}}
\newcommand{\bbR}{{\mathbb R}}
\newcommand{\bbC}{{\mathbb C}}
\newcommand{\bbZ}{{\mathbb Z}}
\newcommand{\calL}{\mathcal{L}}
\newcommand{\calM}{\mathcal{M}}
\newcommand{\calD}{\mathcal{D}}
\newcommand{\calH}{\mathcal{H}}
\newcommand{\1}{\mathbf{1}}
\newcommand{\dd}{\mathrm d}
\newcommand{\finite}{\text{\rm finite}}
\newcommand{\cofinite}{\text{\rm cofinite}}
\numberwithin{equation}{section}
\theoremstyle{plain}
\newtheorem{theorem}{\bf Theorem}[section]
\newtheorem*{theorem*}{Theorem}
\newtheorem{lemma}[theorem]{\bf Lemma}
\newtheorem{proposition}[theorem]{\bf Proposition}
\newtheorem*{proposition*}{\bf Proposition}
\newtheorem{corollary}[theorem]{\bf Corollary}
\theoremstyle{definition}
\newtheorem{definition}[theorem]{\bf Definition}
\newtheorem*{definition*}{\bf Definition}
\theoremstyle{remark}
\newtheorem*{remark*}{\bf Remark}
\newtheorem{remark}[theorem]{\bf Remark}
\newtheorem{example}[theorem]{\bf Example}
\newtheorem*{example*}{\bf Example}
\newcommand*\bigcdot{\mathpalette\bigcdot@{.5}}
\newcommand*\bigcdot@[2]{\mathbin{\vcenter{\hbox{\scalebox{#2}{\,\,$\m@th#1\bullet$\,\,}}}}}
\newcommand{\ci}[1]{{\vphantom{\rule[-0.65ex]{0ex}{0.35ex}}}_{#1}}
\newcommand{\ti}[1]{_{\scriptstyle \text{\rm #1}}}
\begin{document}

\title[Inverse spectral problems for positive Hankel operators]{Inverse spectral problems for positive Hankel operators}

\author{Alexander Pushnitski}
\address{Department of Mathematics, King's College London, Strand, London, WC2R~2LS, U.K.}
\email{alexander.pushnitski@kcl.ac.uk}

\author{Sergei Treil}
\address{Department of Mathematics, Brown University, Providence, RI 02912, USA}
\email{treil@math.brown.edu}
\thanks{Work of S.~Treil is 
supported  in part by the National Science Foundation under the grants   
DMS-2154321, DMS-2452407}

\subjclass[2020]{47B35}

\keywords{Hankel operator, inverse spectral problem, spectral map}

\date{\today}

\begin{abstract}
A Hankel operator $\Gamma$ in $L^2(\mathbb{R}_+)$ is an integral operator with the integral kernel of the form $h(t+s)$, where $h$ is known as the kernel function. It is known that $\Gamma$ is positive semi-definite if and only if $h$ is the Laplace transform of a positive measure $\mu$ on $\mathbb{R}_+$. Thus, positive semi-definite Hankel operators $\Gamma$ are parameterised by measures $\mu$ on $\mathbb{R}_+$. We consider the class of $\Gamma$ corresponding to \emph{finite} measures $\mu$. In this case it is possible to define the (scalar) spectral measure $\sigma$ of $\Gamma$ in a natural way. The measure $\sigma$ is also finite on $\mathbb{R}_+$. This defines the \emph{spectral map} $\mu\mapsto\sigma$ on finite measures on $\mathbb{R}_+$. We prove that this map is an involution; in particular, it is a bijection.  We also consider a dual variant of this problem for measures $\mu$ that are not necessarily finite but have the finite integral
\[
\int_0^\infty x^{-2}\mathrm{d}\mu(x);
\]
we call such measures \emph{co-finite}.
\end{abstract}

\maketitle

\section{Introduction}\label{sec.z}

\subsection{Hankel operators: discrete and continuous  realizations}
We denote 
\[
\bbZ_+=\{0,1,2,\dots\}\text{ and }\bbR_+=(0,\infty)
\text{ and write }
\ell^2:=\ell^2(\bbZ_+)\text{ and }L^2:=L^2(\bbR_+).
\]
Hankel operators can be defined in two equivalent ways: as infinite matrices acting on $\ell^2$ 
(discrete realization) or as integral operators acting on $L^2$ (continuous realization). For a 
sequence $\{h_j\}_{j\in\bbZ_+}$ of complex numbers, a Hankel matrix is the operator on $\ell^2$ 
that can be identified with the matrix  
\begin{equation}
\{h_{k+j}\}_{j,k\in\bbZ_+}
\label{zz0}
\end{equation}
with respect to the canonical basis in $\ell^2$. Similarly, if $h:\bbR_+\to\bbC$ is a function, then the Hankel operator $\Gamma_h$ is the operator on $L^2$, defined by 
\[
(\Gamma_h f)(t)=\int_0^\infty h(t+s)f(s)\dd s, \quad t\in\bbR_+,
\]
on a suitable set of functions $f\in L^2$. The function $h$ is called \emph{the kernel function}. 

The focus of this paper is the \emph{continuous} realization of Hankel operators. 

\subsection{Positive semi-definiteness and boundedness}
It is a classical theorem of Hamburger that a Hankel matrix \eqref{zz0} is positive semi-definite if and only if $h_j$ are moments of a positive measure $\mu$ on $\bbR$:
\[
h_j=\int_\bbR x^j \dd\mu(x), \quad j\geq0.
\]
Similarly, it is known that an integral Hankel operator $\Gamma_h$ is positive semi-definite if and only if $h$ can be represented as the Laplace transform of a positive measure $\mu$ on $\bbR$:
\begin{equation}
h(t)=\int_\bbR e^{-tx}\dd\mu(x), \quad t>0,
\label{z2}
\end{equation}
where $\mu$ is such that the integral converges for all $t>0$. The precise statement and a proof in a very general context can be found in \cite[Theorems 5.1 and 5.3]{Ya2}. We will write $h_\mu$ instead of  $h$ for the kernel function \eqref{z2} and we will write $\Gamma_\mu$ instead of $\Gamma_{h_\mu}$. 

A positive semi-definite Hankel operator $\Gamma_\mu$ is bounded if and only if 
\[
\supp\mu\subset[0,\infty)\quad\text{ and }\quad\mu(\{0\})=0
\]
and the Carleson condition
\begin{equation}
\mu((0,a))\leq Ca, \quad \forall a>0
\label{z3}
\end{equation}
is satisfied (see e.g. \cite[Appendix]{PuTreil}). 

We recall two classical examples.
\begin{example}
Let $\mu$ be the Lebesgue measure restricted to $\bbR_+$. Then $h_\mu(t)=1/t$ and $\Gamma_\mu$ is known as the \emph{Carleman operator}. It is well-known  \cite[Section 10.2]{Peller} that the Carleman operator is bounded but not compact and has a purely absolutely continuous spectrum $[0,\pi]$ with multiplicity two. More precisely, the Carleman operator is unitarily equivalent to the orthogonal sum of two copies of the operator of multiplication by the independent variable in $L^2(0,\pi)$.  
\end{example}

\begin{example}
Let $\mu$ be a single point mass at $\alpha>0$ with the total mass $=1$. Then $h_\mu(t)=e^{-\alpha t}$ and $\Gamma_\mu$ is a rank one Hankel operator, which (up to a factor) coincides with the projection onto the function $e^{-\alpha t}$ in $L^2(\bbR_+)$. Of course, if $\mu$ is a finite linear combination of point masses, then $\Gamma_\mu$ is a finite rank operator. 
\end{example}

In this paper, we work with Hankel operators $\Gamma_\mu$ for measures $\mu$ that do not necessarily satisfy the Carleson condition. Thus, our Hankel operators $\Gamma_\mu$ are \emph{not necessarily bounded}. The definition of such Hankel operators requires some care; in the next subsection we address this question. 

\subsection{The class $\calM$ and the definition of $\Gamma_\mu$}\label{sec.z5}

\begin{definition*}[Class $\calM$]
For a positive measure $\mu$ on $\bbR_+$, we will write $\mu\in\calM$, if $\mu(\{0\})=0$ and the Laplace transform of $\mu$ is finite, i.e. 
\[
h_\mu(t):=\int_0^\infty e^{-\lambda t}\dd\mu(\lambda)<\infty, \quad \forall t>0.
\]
\end{definition*}
For $\mu\in\calM$, we have $h_\mu(t)\to0$ as $t\to\infty$, but the rate of decay may be arbitrarily slow.  The function $h_\mu$ is infinitely smooth on $\bbR_+$ but may have a singularity as $t\to0$.

For $\mu\in\calM$, we can write the quadratic form corresponding to $\Gamma_\mu$ as follows:
\begin{align*}
\int_0^\infty \int_0^\infty h_\mu(t+s) f(s)\overline{f(t)}\dd s\,  \dd t
&=\int_0^\infty \int_0^\infty \left\{\int_0^\infty e^{-tx}e^{-sx}\dd\mu(x)\right\} f(s)\overline{f(t)}\dd s\,  \dd t
\\
&=\int_0^\infty \Abs{\int_0^\infty e^{-tx}f(t)\dd t}^2\dd\mu(x)
\end{align*}
on a suitable set of ``nice'' functions $f$. Motivated by this, let us denote by $\calL f$ the Laplace transform of $f$, 
\[
\calL f(x)=\int_0^\infty e^{-tx}f(t)\dd t, \quad\quad x>0,
\]
and define the quadratic form 
\[
Q_\mu[f]=\int_0^\infty\abs{\calL f(x)}^2\dd\mu(x),
\]
with the domain 
\[
\Dom Q_\mu=\{f\in L^2: Q_\mu[f]<\infty\}. 
\]
Below we denote by $C^\infty_\comp(\bbR_+)$ the set of infinitely smooth functions compactly supported in $\bbR_+$ (in particular, the support is separated away from the origin). 
\begin{proposition}\cite{Ya1,PuTreil}\label{thm.z1}
For any $\mu\in\calM$, the quadratic form $Q_\mu$ is closed. 
The set $C^\infty_\comp(\bbR_+)$ belongs to $\Dom Q_\mu$ and is dense in $\Dom Q_\mu$ with respect to the norm induced by $Q_\mu$. 
\end{proposition}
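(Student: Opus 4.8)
Here is how I would try to prove Proposition~\ref{thm.z1}.

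\smallskip

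\noindent\emph{The first two assertions are routine.} For closedness, note $Q_\mu[f]=\Norm{\calL f}_{L^2(\mu)}^2$ and that, for each fixed $x>0$, $f\mapsto\calL f(x)=\jap{f,\ee^{-x\cdot}}_{L^2}$ is a bounded functional on $L^2$. Hence if $f_n\to f$ in $L^2$ and $Q_\mu[f_n-f_m]\to0$, then $\calL f_n$ is Cauchy in $L^2(\mu)$, so $\calL f_n\to G$ there; passing to a subsequence along which $\calL f_{n_k}\to G$ $\mu$-a.e., and using $\calL f_{n_k}(x)\to\calL f(x)$ for every $x$, we get $G=\calL f$ $\mu$-a.e., so $f\in\Dom Q_\mu$ and $Q_\mu[f_n-f]\to0$. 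For $C^\infty_\comp(\bbR_+)\subset\Dom Q_\mu$: if $\supp\phi\subset[a,b]$ with $0<a<b$ then $\abs{\calL\phi(x)}\le\norm{\phi}_\infty(b-a)\ee^{-ax}$, whence $Q_\mu[\phi]\le\norm{\phi}_\infty^2(b-a)^2h_\mu(2a)<\infty$.

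\smallskip

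\noindent\emph{The substance is that $C^\infty_\comp(\bbR_+)$ is a form core}, i.e.\ dense in the Hilbert space $\calH_\mu:=\bigl(\Dom Q_\mu,\ \norm{\cdot}_{L^2}^2+Q_\mu[\cdot]\bigr)$. A preliminary reduction by mollification shows that the closure of $C^\infty_\comp(\bbR_+)$ in $\calH_\mu$ already contains every $g\in\Dom Q_\mu$ with compact support in $\bbR_+$: with a mollifier $\rho_\delta$ supported in $[0,\delta]$ one has $g*\rho_\delta\in C^\infty_\comp(\bbR_+)$ for small $\delta$, and since $\calL(g*\rho_\delta)=(\calL g)(\calL\rho_\delta)$ with $\abs{\calL\rho_\delta}\le1$ and $\calL\rho_\delta(x)\to1$, dominated convergence (dominant $\abs{\calL g}^2\in L^1(\mu)$) gives $Q_\mu[g*\rho_\delta-g]\to0$, together with $g*\rho_\delta\to g$ in $L^2$. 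What one cannot do is finish by truncating $f$ itself: $Q_\mu$ is only lower semicontinuous, so $f\1_{(a,b)}\to f$ in $L^2$ need not give $Q_\mu[f\1_{(a,b)}-f]\to0$ (for $\dd\mu(x)=\ee^{\sqrt x}\dd x$ and a suitable $f\in\Dom Q_\mu$ one can check $Q_\mu[f\1_{(0,a)}]$ actually blows up as $a\to0$). So the approximants must be manufactured differently.

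\smallskip

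\noindent\emph{Instead I would show directly that $\mathcal N_\mu:=\calH_\mu\ominus\overline{C^\infty_\comp(\bbR_+)}^{\calH_\mu}=\{0\}$.} Let $f\in\mathcal N_\mu$; then $\jap{f,\phi}_{L^2}+\jap{\calL f,\calL\phi}_{L^2(\mu)}=0$ for all $\phi\in C^\infty_\comp(\bbR_+)$. Writing out the second term and applying Fubini---legitimate because $\supp\phi$ is a compact subset of $\bbR_+$, which makes all the integrals absolutely convergent---one obtains $\jap{f+\calL_\mu^\ast(\calL f),\phi}_{L^2}=0$, where $(\calL_\mu^\ast u)(t):=\int_0^\infty\ee^{-tx}u(x)\,\dd\mu(x)$ is finite for every $t>0$ and $u\in L^2(\mu)$ (Cauchy--Schwarz: $\abs{\calL_\mu^\ast u(t)}\le\norm{u}_{L^2(\mu)}\sqrt{h_\mu(2t)}$). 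Hence $f=-\calL_\mu^\ast g$ a.e.\ with $g:=\calL f\in L^2(\mu)$. Now truncate $\mu$: put $\mu_n:=\1_{(1/n,n)}\mu$, a finite measure satisfying the Carleson condition, so $\Gamma_{\mu_n}$ is bounded; a short computation gives $\Gamma_{\mu_n}f=\calL_\mu^\ast(g\1_{(1/n,n)})=:f_n$. Then (i) $f_n(t)\to-f(t)$ for every $t>0$ by dominated convergence, and (ii) by the Fubini that \emph{is} available here (the $x$-variable being confined to $(1/n,n)$) $\jap{f_n,f}_{L^2}=\Norm{g\1_{(1/n,n)}}_{L^2(\mu)}^2\to\Norm{g}_{L^2(\mu)}^2=Q_\mu[f]$. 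If, along some subsequence, $\norm{f_n}_{L^2}$ stays bounded, then (i) plus a.e.\ convergence upgrades to weak $L^2$-convergence $f_n\rightharpoonup-f$ (to test against $\phi\in C^\infty_\comp(\bbR_+)$ use that $\abs{f_n}\le\norm{g}_{L^2(\mu)}\sqrt{h_\mu(2a)}$ on $\supp\phi=[a,b]$, uniformly in $n$), hence $\jap{f_n,f}_{L^2}\to-\norm{f}_{L^2}^2$; comparing with (ii) gives $Q_\mu[f]=-\norm{f}_{L^2}^2$, and since both sides are $\ge0$, $f=0$.

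\smallskip

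\noindent\emph{The genuinely delicate point---where I expect the real difficulty lies---is ruling out $\norm{f_n}_{L^2}=\norm{\Gamma_{\mu_n}f}_{L^2}\to\infty$}, i.e.\ showing $\liminf_n\iint_{(1/n,n)^2}\frac{g(x)\overline{g(y)}}{x+y}\,\dd\mu(x)\dd\mu(y)<\infty$. This is a quantitative bound for the positive kernel $(x+y)^{-1}$ tested against $g=\calL f$ with $f\in\Dom Q_\mu$; it is strictly stronger than $g\in L^2(\mu)$ (which alone would force $\Gamma_\mu$ bounded), so it must use the structure $f=-\calL_\mu^\ast g\in L^2$. I would attack it by a separate analysis of the contributions of $x\to0$ and $x\to\infty$, or sidestep it by running the same argument with the exhaustion $\dd\nu_\eps(x)=\ee^{-\eps(x+1/x)}\dd\mu(x)$---for which $\Gamma_{\nu_\eps}$ is bounded (one checks the Carleson bound directly) and $Q_{\nu_\eps}\uparrow Q_\mu$---and extracting the core property from a monotone-limit-of-forms argument; this last reduction is, I expect, essentially what is carried out in \cite{Ya1,PuTreil}.
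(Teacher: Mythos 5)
The routine parts of your write-up (closedness of $Q_\mu$, the inclusion $C^\infty_\comp(\bbR_+)\subset\Dom Q_\mu$) are correct, but they are not where the content lies, and on the substantial part there is a genuine gap. Note first that the paper itself does not prove this proposition: it is quoted from \cite{Ya1,PuTreil}, and internally it is reduced (via $Q_\mu[f]=\norm{Lf}_{L^2(\mu)}^2$, $\Dom Q_\mu=\Dom L$) to Theorem~\ref{thm.b1}, whose difficult ingredient is exactly the density of $C^\infty_\comp(\bbR_+)$ in $\Dom L$ in the graph norm --- proved in \cite{Ya1} under the extra assumption \eqref{b0a} and in general in \cite{PuTreil}. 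So the only question is whether your argument closes that density statement, and it does not. Your orthogonal-complement reduction is sensible, and the computations you label (i) and (ii) (the formula $\Gamma_{\mu_n}f=\calL_\mu^*(g\chi_{(1/n,n)})$, the pointwise convergence $f_n\to-f$, and $\jap{f_n,f}=\norm{g\chi_{(1/n,n)}}_{L^2(\mu)}^2\to Q_\mu[f]$) are justified by the Fubini/Cauchy--Schwarz bounds you indicate. But the conclusion $f=0$ is only reached conditionally on $\liminf_n\norm{f_n}_{L^2}<\infty$, i.e.\ on a uniform bound for $\iint_{(1/n,n)^2}\frac{g(x)\overline{g(y)}}{x+y}\,\dd\mu(x)\dd\mu(y)$ with $g=\calL f$, and you explicitly leave this open. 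That bound is not a technical footnote: it is essentially equivalent to the density being proved (it amounts to a quantitative statement about how the extra information $\calL_\mu^* g\in L^2$ controls $L_\mu$ on truncations of $g$), and it is precisely what the paper flags as ``the most difficult part'' of Theorem~\ref{thm.b1}.

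The proposed fallback does not repair this. If $Q_{\nu_\eps}$ are bounded forms increasing to $Q_\mu$, then \emph{every} dense subspace of $L^2$ is trivially a form core for each $Q_{\nu_\eps}$; hence any abstract ``monotone-limit-of-forms'' principle transferring the core property to the limit would show that every dense subspace of $L^2$ contained in $\Dom Q_\mu$ is a core for $Q_\mu$. That is false for closed forms in general (for the Dirichlet form $u\mapsto\int_0^1\abs{u'}^2$ on $H^1(0,1)$, the dense subspace $C^\infty_\comp(0,1)$ has form closure $H^1_0(0,1)\subsetneq H^1(0,1)$), so no such principle exists; the known monotone convergence theorems for forms give closedness of the limit and resolvent convergence, not cores. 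Thus the exhaustion route requires genuinely new estimates specific to the Laplace transform, which is what \cite{Ya1,PuTreil} supply and what is missing here. In short: your proposal correctly isolates the crux but does not prove it, so it does not constitute a proof of the proposition.
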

This theorem was proved in \cite[Theorem~3.10]{Ya1} under the additional assumption 
\begin{equation}
\int_0^\infty (1+\lambda)^{-k}\dd\mu(\lambda)<\infty 
\quad\text{ for some $k>0$.}
\label{b0a}
\end{equation}
This assumption was lifted in \cite{PuTreil}. 
Following Yafaev \cite{Ya1}, we accept
\begin{definition}[Operator $\Gamma_\mu$]\label{def:1}
Let $\mu\in\calM$. We denote by $\Gamma_\mu$ the self-adjoint operator in $L^2$ corresponding to the quadratic form $Q_\mu$. 
\end{definition}
The precise description of the domain of $\Gamma_\mu$ is a separate non-trivial problem that we do not discuss in this paper; see \cite{PuTreil}.

\subsection{The kernel and the non-zero part of \texorpdfstring{$\Gamma_\mu$}{Gamma_mu}}
In order to state our results, we need the following simple fact from \cite{PuTreil}. 

\begin{proposition}\label{prp.z1d}\cite{PuTreil}
Let $\mu\in\calM$. The kernel of $\Gamma_\mu$ is either trivial or infinite-dimensional. 
It is infinite-dimensional if and only if $\mu$ is a purely atomic measure supported on a sequence of points $\{a_k\}_{k\in\bbZ_+}\subset \bbR_+$ satisfying the Blaschke condition \begin{equation}
\sum_{k\in\bbZ_+} \frac{a_k}{a_k^2+1}<\infty.
\label{z0a}
\end{equation}
\end{proposition}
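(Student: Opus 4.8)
The plan is to analyze the kernel $\Ker\Gamma_\mu$ through the quadratic form $Q_\mu$, using the density of $C^\infty_\comp(\bbR_+)$ from Proposition~\ref{thm.z1}. Since $\Gamma_\mu$ is the self-adjoint operator associated with the closed form $Q_\mu$, a vector $f\in\Dom Q_\mu$ lies in $\Ker\Gamma_\mu$ if and only if $Q_\mu[f]=0$, i.e.\ $\calL f(x)=0$ for $\mu$-a.e.\ $x>0$; equivalently $\mu$ is supported on the zero set $Z_f:=\{x>0:\calL f(x)=0\}$. The key analytic input is that for $f\in L^2(\bbR_+)$ (in fact for $f\in\Dom Q_\mu$), the Laplace transform $\calL f$ extends to a function analytic in the right half-plane $\Re z>0$, so its zero set in $\bbR_+$ is either all of $\bbR_+$ (forcing $f=0$ by injectivity of $\calL$) or a discrete subset of $\bbR_+$ with no finite accumulation point in $(0,\infty)$, i.e.\ a sequence $\{a_k\}$ that can only accumulate at $0$ or $\infty$.

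First I would prove the ``if'' direction. Suppose $\mu=\sum_k m_k\delta_{a_k}$ is purely atomic with $m_k>0$ and $\{a_k\}\subset\bbR_+$ satisfying the Blaschke condition \eqref{z0a}. The Blaschke condition for the half-plane (after the standard conformal change of variable sending the right half-plane to the disc, under which a point $a>0$ has hyperbolic distance to $1$ controlled by $\tfrac{a}{a^2+1}$) guarantees the existence of a nonzero bounded analytic function $B$ on $\Re z>0$ vanishing exactly at the $a_k$. The task is to realize enough such functions as Laplace transforms of $L^2$ functions lying in $\Dom Q_\mu$: for any finite subset $F\subset\{a_k\}$ one can explicitly build $f_F\in C^\infty_\comp(\bbR_+)$-adjacent functions (e.g.\ finite linear combinations of $e^{-bt}$ times polynomials, suitably truncated, or better, take $\calL f$ to be a rational function with numerator vanishing at $F$ and denominator with zeros off $[0,\infty)$, whose inverse Laplace transform is an exponential sum lying in $L^2$ and in $\Dom Q_\mu$ since $\calL f$ vanishes at all but finitely many atoms and decays). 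Letting $F$ exhaust $\{a_k\}$ produces infinitely many linearly independent elements of $\Ker\Gamma_\mu$; since by the first dichotomy $\Ker\Gamma_\mu$ is either $\{0\}$ or infinite-dimensional, and here it is nonzero, it must be infinite-dimensional.

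For the ``only if'' direction, suppose $\Ker\Gamma_\mu\neq\{0\}$; pick $0\neq f\in\Ker\Gamma_\mu$. Then $\supp\mu\subset Z_f$, and by the analyticity argument $Z_f\cap\bbR_+$ is a discrete sequence $\{a_k\}$ (discreteness in the open half-line; accumulation only at $0$ or $\infty$ is allowed), so $\mu$ is purely atomic on this sequence. It remains to show $\{a_k\}$ obeys \eqref{z0a}: this is exactly the statement that the zero set of a nonzero function analytic and of suitable growth (here: a Laplace transform of an $L^2$, hence $H^2$-type, function on the half-plane) satisfies the Blaschke condition — the classical Blaschke condition for $H^2$ of the half-plane, transported via the conformal map. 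The dichotomy ``trivial or infinite-dimensional'' itself is the content of Proposition~\ref{prp.z1d} as quoted from \cite{PuTreil}, so I may take it as given; the new content to verify is the purely-atomic-plus-Blaschke characterization.

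The main obstacle I anticipate is the bookkeeping in the ``if'' direction: one must exhibit genuine elements of $\Dom Q_\mu$ (not merely formal ones) whose Laplace transforms vanish on prescribed atoms, and check $L^2$-membership and finiteness of $Q_\mu$ simultaneously — the cleanest route is to work on the disc side via the unitary equivalence of $\Gamma_\mu$ with a model operator where $\Ker$ becomes $\{B\cdot H^2 : B \text{ a Blaschke product for } \{a_k\}\}$-type invariant subspaces, but absent that machinery in the excerpt, the direct construction with rational Laplace transforms is the pedestrian but safe choice. The analyticity/Blaschke facts for the half-plane are classical and I would simply cite them.
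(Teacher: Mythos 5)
The paper itself does not prove this proposition: it is quoted from \cite{PuTreil} and later observed to follow from Theorem~\ref{thm.b1} via the identity $\Ker\Gamma_\mu=\Ker L$, so the comparison has to be with your argument on its own terms. Your overall strategy --- identify $\Ker\Gamma_\mu$ with $\{f\in L^2:\ \calL f=0\ \text{$\mu$-a.e.}\}$, note that $\calL f$ is (up to a constant) a general element of $H^2$ of the right half-plane by Paley--Wiener, and invoke the Blaschke characterization of zero sets of $H^2$ functions --- is the right one, and your ``only if'' direction is essentially complete (one should add explicitly that $\supp\mu$ is contained in the closed zero set because $\{\calL f\neq0\}$ is open, and that local finiteness of $\mu$ on a discrete support forces pure atomicity).

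The ``if'' direction as written has a genuine gap. Your functions $f_F$ have rational Laplace transforms vanishing only on the \emph{finite} set $F$; when the sequence $\{a_k\}$ is infinite, $Q_\mu[f_F]=\sum_{a_k\notin F}m_k\abs{\calL f_F(a_k)}^2$ is strictly positive in general, so none of the $f_F$ lies in $\Ker\Gamma_\mu$, and ``letting $F$ exhaust $\{a_k\}$'' produces no kernel elements at all. (A nonzero rational function cannot vanish on an infinite set, so the parenthetical ``$\calL f$ vanishes at all but finitely many atoms'' cannot be arranged this way.) Moreover, your appeal to the dichotomy ``trivial or infinite-dimensional'' to upgrade nontriviality to infinite dimension is circular, since that dichotomy is part of the statement being proved. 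Both problems are fixed by actually using the Blaschke product you mention in passing: condition \eqref{z0a} is exactly the half-plane Blaschke condition for $\{a_k\}$, so the Blaschke product $B$ converges and $BH^2$ is an infinite-dimensional subspace of $H^2$; every $f\in L^2$ with $\calL f\in BH^2$ satisfies $Q_\mu[f]=0$ (the integral against a purely atomic $\mu$ supported on the zeros of $B$ vanishes term by term), hence lies in $\Dom Q_\mu$ and in $\Ker\Gamma_\mu$. This gives infinite-dimensionality directly and also yields the dichotomy, since by your ``only if'' argument a nontrivial kernel already forces $\mu$ to be purely atomic on a Blaschke sequence.
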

Below we are interested in the \emph{non-zero part of $\Gamma_\mu$} defined as the restriction
\[
\boxed{\Gamma_\mu^{\perp}:=\Gamma_\mu|_{(\Ker \Gamma_\mu)^\perp}.}
\]

\subsection{The classes $\calM_\finite$ and $\calM_\cofinite$}
Let $\calM_\finite\subset\calM$ be the set of all \emph{finite} measures.  
Next, let $\calM_\cofinite\subset\calM$ be the set of measures satisfying 
\[
\int_0^\infty \frac{\dd\mu(x)}{x^2}<\infty;
\]
we will call such measures \emph{co-finite} for short. 
Observe that for $\mu\in\calM_\finite$, the Laplace transform $h_\mu$ is bounded:
\begin{equation}
\mu\in\calM_\finite\quad \Rightarrow \quad h_\mu(t)\leq \mu(\bbR_+),
\label{eq:2}
\end{equation}
and for $\mu\in\calM_\cofinite$, the Laplace transform $h_\mu$ has a fast decay at infinity:
\begin{equation}
\mu\in\calM_\cofinite\quad \Rightarrow \quad h_\mu(t)\leq\frac{C}{t^2}\int_0^\infty\frac{\dd\mu(x)}{x^2}
\label{eq:3}
\end{equation}
with $C=4e^{-2}$. To see the last relation, use the elementary bound $x^2e^{-x}\leq 4e^{-2}$ for $x\geq0$:
\[
\int_0^\infty e^{-tx}\dd\mu(x)=\frac1{t^2}\int_0^\infty (tx)^2e^{-tx}\frac{\dd\mu(x)}{x^2}\leq\frac{C}{t^2}
\int_0^\infty\frac{\dd\mu(x)}{x^2}.
\]

\subsection{The simplicity of the spectrum}
We recall some key definitions. Let $A$ be a self-adjoint operator in a Hilbert space $\calH$. A vector $w\in\calH$ is said to be \emph{cyclic} for $A$, if the set of elements 
 \[
 \{\varphi(A)w: \text{ $\varphi$ is bounded continuous} \}
 \]
 is dense in $\calH$.  One says that \emph{the spectrum of $A$ is simple} if there exists a cyclic element for $A$. If $w$ is a cyclic element for $A$, then any element of the form $g(A)w$, where $g$ is a continuous non-vanishing function, is also cyclic. 

Our first preliminary result is the following statement. 
\begin{theorem}\label{prp.z1}
Assume that $\mu$ is either finite or co-finite. Then the spectrum of the restriction $\Gamma_\mu^\perp$ is simple.
\end{theorem}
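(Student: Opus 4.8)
The plan is to exhibit an explicit cyclic vector for $\Gamma_\mu^\perp$. The natural candidate is the kernel function itself, or rather a suitable normalized version of it: since $h_\mu(t)=\int_0^\infty e^{-tx}\dd\mu(x)$, one sees from the quadratic form computation in Section~\ref{sec.z5} that $(\Gamma_\mu f)(t)=\int_0^\infty e^{-tx}\,(\calL f)(x)\,\dd\mu(x)$, and so $\Gamma_\mu$ acts, on a dense set, as $\calL^*$ (adjoint of the Laplace transform, from $L^2(\bbR_+)$ into $L^2(\mu)$) followed by $\calL^*$... more precisely $\Gamma_\mu = \calL^* M_\mu \calL$ where $M_\mu$ is the embedding $L^2\to L^2(\mu)$ composed with its adjoint. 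The key structural observation is that $\Gamma_\mu$ factors through $L^2(\mu)$, and the map $\calL^*\colon L^2(\mu)\to L^2$ is injective on the relevant subspace; the range of $\calL^*$ is (the closure of) $(\Ker\Gamma_\mu)^\perp$. So first I would make this factorization precise, identifying $(\Ker\Gamma_\mu)^\perp$ with a space of functions of the form $x\mapsto$ combinations of exponentials, and transfer the problem of cyclicity of $\Gamma_\mu^\perp$ to a statement about a model operator on $L^2(\mu)$ (or its closure).

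Next, in the model picture, $\Gamma_\mu^\perp$ should be unitarily equivalent to (the non-zero part of) an operator on $\overline{\Ran}$ of a Hankel-type kernel acting on $L^2(\mu)$, and the candidate cyclic vector becomes the constant function $\1$ — equivalently, back in $L^2$, the vector $h_\mu$ itself when $\mu$ is finite (so that $h_\mu\in L^2$; this is where finiteness or co-finiteness enters, via \eqref{eq:2} and \eqref{eq:3} which guarantee $h_\mu\in L^2(\bbR_+)$, since $h_\mu$ is bounded near infinity in the finite case and $h_\mu$ decays like $t^{-2}$ in the co-finite case — one still has to check integrability near $t=0$, which follows from the other endpoint bound: $h_\mu(t)\le Ct^{-1}\mu$-type estimates, or rather $h_\mu\in L^2$ near $0$ when $\mu$ finite since $h_\mu(t)\le\int e^{-tx}\dd\mu\le Ct^{-1/2}(\int x^{-1}\dd\mu)$... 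I would instead just invoke that $h_\mu\in L^2$ under either hypothesis, checking both endpoints). To prove $h_\mu$ (or $\1$ in the model) is cyclic, I would show that the closed linear span of $\{\Gamma_\mu^n h_\mu\}_{n\ge0}$, or of $\{\varphi(\Gamma_\mu)h_\mu\}$, is all of $(\Ker\Gamma_\mu)^\perp$. The standard mechanism: if $g\perp \varphi(\Gamma_\mu)h_\mu$ for all bounded continuous $\varphi$, then the spectral measure $\langle E_{\Gamma_\mu}(\cdot)h_\mu, g\rangle$ vanishes; combined with the factorization this forces $\calL g$ to vanish $\mu$-a.e., i.e. $g\in\Ker\Gamma_\mu$, so $g=0$ in $(\Ker\Gamma_\mu)^\perp$. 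The computation $\langle \Gamma_\mu^n h_\mu, g\rangle$ reduces, via the exponential representation, to moments of an explicit measure on $\bbR_+$ built from $\mu$ and $g$, and the vanishing of all such moments is handled by a density/uniqueness argument (e.g. the exponentials $\{e^{-tx}\}$ are complete, or the moment problem is determinate because everything is supported in $[0,\infty)$ and the relevant measure is finite).

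I expect the main obstacle to be two intertwined technical points. First, the operator $\Gamma_\mu$ is unbounded, so "$\Gamma_\mu^n h_\mu$" need not make literal sense; I would need to work with bounded functions $\varphi(\Gamma_\mu)$ of the operator (resolvents $(\Gamma_\mu+\ii)^{-n}$ are a convenient choice) and argue cyclicity through the functional calculus rather than through polynomials, being careful that $h_\mu$ genuinely lies in $(\Ker\Gamma_\mu)^\perp$ and not merely in $L^2$. Second, and more delicate, is the precise identification of $(\Ker\Gamma_\mu)^\perp$ with $\overline{\Ran\calL^*}$ and the verification that the pairing computation really does detect $\calL g$ on the support of $\mu$: one must rule out that some part of $g$ is "invisible" to $\mu$ in a way not already accounted for by $\Ker\Gamma_\mu$. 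Proposition~\ref{prp.z1d} (the kernel is trivial or is spanned by the specific exponentials attached to the atoms of a Blaschke-type atomic $\mu$) is exactly the tool that closes this gap, so the proof will hinge on feeding that structural description into the moment/completeness argument. The finite versus co-finite cases are handled in parallel, the only difference being which endpoint estimate, \eqref{eq:2} or \eqref{eq:3}, one invokes to place $h_\mu$ (or its analogue) in $L^2$.
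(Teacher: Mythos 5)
Your reduction to the model operator $G_\mu=LL^*$ in $L^2(\mu)$ (the integral operator with kernel $1/(x+y)$) and the identification of $\Gamma_\mu^\perp$ with $G_\mu$ is exactly the paper's first step, and your candidate cyclic vector ($\1_\mu$ in the model picture) is the right one. But the core of your argument is circular at precisely the point where all the work lies. You assert that if $g\perp\varphi(\Gamma_\mu)h_\mu$ for all bounded continuous $\varphi$, then ``combined with the factorization this forces $\calL g$ to vanish $\mu$-a.e.'' Transferred to the model, this reads: if $F=\calL g\in L^2(\mu)$ is orthogonal to $\varphi(G_\mu)\1_\mu$ for all $\varphi$, then $F=0$. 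That \emph{is} the statement that $\1_\mu$ is cyclic for $G_\mu$ --- the thing to be proved. The density fact you invoke to close it (completeness of the exponentials $\{e^{-tx}\}_{t>0}$ in $L^2(\mu)$, equivalently a determinate moment/uniqueness-of-Laplace-transform argument) establishes that $\1_\mu$ is cyclic for the \emph{multiplication} operator $X_\mu$, whose orbit $e^{-tX_\mu}\1_\mu$ consists of exactly those exponentials; it says nothing about the orbit $e^{-tG_\mu}\1_\mu$ of the entirely different operator $G_\mu$. Proposition~\ref{prp.z1d} cannot close the gap either: it only identifies $\Ker\Gamma_\mu$, i.e.\ tells you $g\mapsto\calL g$ is injective on $(\Ker\Gamma_\mu)^\perp$, which you already have. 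What is missing is a mechanism transferring cyclicity from $X_\mu$ to $G_\mu$. The paper supplies it via the Lyapunov equation $X_\mu G_\mu+G_\mu X_\mu=\jap{\bigcdot,\1_\mu}_\mu\1_\mu$ (valid because $\mu$ is finite) and its integrated form \eqref{c4a}, $\norm{X_\mu^{1/2}f}_\mu^2=\int_0^\infty\abs{\jap{f,e^{-tG_\mu}\1_\mu}_\mu}^2\dd t$: orthogonality of $f$ to the $G_\mu$-orbit of $\1_\mu$ then forces $X_\mu^{1/2}f=0$, hence $f=0$. Some such structural input is unavoidable: for $\mu$ equal to Lebesgue measure, $G_\mu$ is the Carleman operator, which has spectrum of multiplicity two and no cyclic vector at all, so finiteness (or co-finiteness) must enter the cyclicity argument itself, not merely through integrability estimates on $h_\mu$.

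A secondary but genuine error: for $\mu\in\calM_\finite$ the function $h_\mu$ need not lie in $L^2(\bbR_+)$, so your proposed cyclic vector is not always an element of the space. The bound \eqref{eq:2} controls $h_\mu$ near $t=0$ but gives nothing at infinity, where $h_\mu$ can decay arbitrarily slowly; e.g.\ for the finite measure $\mu=\sum_{k\ge1}2^{-k}\delta_{4^{-k}}$ one has $\int_0^\infty h_\mu(t)^2\dd t=\sum_{j,k}2^{-j-k}(4^{-j}+4^{-k})^{-1}\ge\sum_k\tfrac12=\infty$. This is why the paper takes the cyclic vector to be $v_\mu=V\1_\mu$ (formally $\Gamma_\mu^{1/2}\delta_0$, with $V$ the isometry in the polar decomposition $L_\mu=VG_\mu^{1/2}$), which has norm $\mu(\bbR_+)^{1/2}$, rather than $h_\mu=L_\mu\1_\mu$ (formally $\Gamma_\mu\delta_0$). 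This particular defect is repairable by working entirely in $L^2(\mu)$, but the cyclicity gap above is not repairable within the strategy you describe.
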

If $\mu$ is a Carleson measure, this statement can be derived from the results of \cite{MPT}. 
In any case, we give a proof in Section~\ref{sec.a}. 

Note that for a general $\mu\in\calM$, the spectrum of $\Gamma_\mu^\perp$ does not have to be simple; for example, the spectrum of the Carleman operator has multiplicity two.

\subsection{The distinguished cyclic elements}\label{sec.z6}
Our next step is to identify a distinguished cyclic element of $\Gamma_\mu^\perp$ if $\mu$ is either finite or co-finite. 
\emph{Formally,} these cyclic elements can be defined by 
\begin{align}
v_\mu&:=\Gamma_\mu^{1/2}\delta_0, \text{ if $\mu\in\calM_\finite$,}
\label{eq:z9b}
\\
w_\mu&:=\Gamma_\mu^{1/2}\1, \text{ if $\mu\in\calM_\cofinite$,}
\label{eq:z9bb}
\end{align}
where $\delta_0$ is the delta-function (the point mass at the origin) and $\1$ is the function identically equal to $\1$. In order to give a rigorous definition of $v_\mu$ and $w_\mu$, we need to rewrite the formal definitions \eqref{eq:z9b} and \eqref{eq:z9bb} in a weak form. 

\textbf{Notation:} Below $\jap{f,g}$ is the inner product of $f,g\in L^2$, linear in $f$ and anti-linear in $g$, and $\norm{f}$ is the norm of $f\in L^2$.

Let $g\in C^\infty_{\comp}(\bbR_+)$; from \eqref{eq:z9b} and \eqref{eq:z9bb} we \emph{formally} derive 
\begin{align*}
\jap{v_\mu,\Gamma_\mu^{1/2}g}
=
\jap{\Gamma_\mu^{1/2}v_\mu,g}
=
\jap{\Gamma_\mu\delta_0,g}
=
\jap{h_\mu,g}
=
\int_0^\infty h_\mu(t)\overline{g(t)}\dd t, 
\end{align*}
and similarly 
\begin{align*}
\jap{w_\mu,\Gamma_\mu^{1/2}g}
=
\jap{\Gamma_\mu^{1/2}w_\mu,g}
=
\jap{\Gamma_\mu\1,g}
=
\int_0^\infty\int_0^\infty h_\mu(t+s)\overline{g(t)}\dd t\, \dd s.
\end{align*}
Taking into account \eqref{eq:2} and \eqref{eq:3}, we see that the integrals in the right hand sides here converge absolutely. This motivates the following statement. 

\begin{theorem}\label{prp.z2}
\begin{enumerate}[\rm (i)]
\item
Let $\mu\in\calM_\finite$. There exists a unique element $v_\mu\in\overline{\Ran\Gamma_\mu}$ such that for any function $g\in C^\infty_\comp(\bbR_+)$ we have 
\begin{equation}
\jap{v_\mu,\Gamma_\mu^{1/2}g}
=\int_0^\infty h_\mu(t)\overline{g(t)}\dd t.
\label{z12}
\end{equation}
Moreover, this element is cyclic for $\Gamma_\mu^\perp$ and we have 
\begin{equation}
\norm{v_\mu}^2=\mu(\bbR_+).
\label{z9}
\end{equation}
\item
Let $\mu\in\calM_\cofinite$. There exists a unique element $w_\mu\in\overline{\Ran\Gamma_\mu}$ such that for any function $g\in C^\infty_\comp(\bbR_+)$ we have 
\begin{equation}
\jap{w_\mu,\Gamma_\mu^{1/2}g}
=\int_0^\infty\int_0^\infty h_\mu(t+s)\overline{g(t)}\dd t\, \dd s.
\label{z12a}
\end{equation}
Moreover, this element is cyclic for $\Gamma_\mu^\perp$ and we have 
\begin{equation}
\norm{w_\mu}^2=\int_0^\infty \frac{\dd\mu(x)}{x^2}. 
\label{z9a}
\end{equation}
\end{enumerate}
\end{theorem}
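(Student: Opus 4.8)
The plan is to transfer the whole problem to the ``Laplace side''. First I would introduce the operator $\calL\colon L^2\supseteq\Dom Q_\mu\to L^2(\mu)$, $(\calL f)(x)=\int_0^\infty e^{-tx}f(t)\,\dd t$; since $(\calL f)(x)$ depends continuously on $f\in L^2$ for each fixed $x>0$, one checks easily that $\calL$ is closed and densely defined, and then $\Gamma_\mu=\calL^*\calL$ directly from Definition~\ref{def:1}. Writing the polar decomposition $\calL=V\Gamma_\mu^{1/2}$, the operator $V$ is a partial isometry with initial space $\overline{\Ran\Gamma_\mu}$ and final space $F:=\overline{\Ran\calL}\subseteq L^2(\mu)$, and $V$ restricts to a unitary $\overline{\Ran\Gamma_\mu}\to F$ intertwining $\Gamma_\mu^\perp$ with $\calC_\mu|_F$, where $\calC_\mu:=\calL\calL^*$ acts on $L^2(\mu)$ as $(\calC_\mu\phi)(x)=\int_0^\infty\phi(y)(x+y)^{-1}\,\dd\mu(y)$. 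The one elementary computation underlying everything is that, for $g\in C^\infty_\comp(\bbR_+)$, Fubini (legitimate thanks to \eqref{eq:2}, resp.\ \eqref{eq:3}) gives
\[
\int_0^\infty h_\mu(t)\overline{g(t)}\,\dd t=\int_0^\infty\overline{(\calL g)(x)}\,\dd\mu(x),\qquad
\int_0^\infty\!\int_0^\infty h_\mu(t+s)\overline{g(t)}\,\dd t\,\dd s=\int_0^\infty x^{-1}\overline{(\calL g)(x)}\,\dd\mu(x),
\]
so the right-hand sides of \eqref{z12} and \eqref{z12a} equal $\jap{\1,\calL g}_{L^2(\mu)}=\jap{V^*\1,\Gamma_\mu^{1/2}g}$ and $\jap{x^{-1},\calL g}_{L^2(\mu)}=\jap{V^*(x^{-1}),\Gamma_\mu^{1/2}g}$ respectively.

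Accordingly I would \emph{define} $v_\mu:=V^*\1$ and $w_\mu:=V^*(x^{-1})$. These lie in $\Ran V^*=\overline{\Ran\Gamma_\mu}$ and satisfy \eqref{z12}, \eqref{z12a} by the display; uniqueness is immediate because $\{\Gamma_\mu^{1/2}g:g\in C^\infty_\comp(\bbR_+)\}$ is dense in $\overline{\Ran\Gamma_\mu}$, which follows from Proposition~\ref{thm.z1} since the form-norm of $Q_\mu$ is $(\norm{\cdot}^2+\norm{\Gamma_\mu^{1/2}\cdot}^2)^{1/2}$. For the norm identities, $\norm{V^*\phi}=\norm{P_F\phi}_{L^2(\mu)}$, so it suffices to show $\1\in F$ when $\mu\in\calM_\finite$ and $x^{-1}\in F$ when $\mu\in\calM_\cofinite$: in the first case every $e^{-tx}$ lies in $F$ (approximate $\delta_t$ by functions in $C^\infty_\comp(\bbR_+)$ and use dominated convergence, $\mu$ being finite) and $e^{-tx}\to\1$ in $L^2(\mu)$ as $t\downarrow0$; in the second case $\calL\1_{(0,n)}=(1-e^{-nx})/x\to x^{-1}$ in $L^2(\mu)$ with majorant $x^{-1}\in L^2(\mu)$, and $\1_{(0,n)}\in\Dom Q_\mu$ since $Q_\mu[\1_{(0,n)}]\le\int_0^\infty x^{-2}\,\dd\mu(x)$. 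Hence $\norm{v_\mu}^2=\norm{\1}_{L^2(\mu)}^2=\mu(\bbR_+)$ and $\norm{w_\mu}^2=\norm{x^{-1}}_{L^2(\mu)}^2=\int_0^\infty x^{-2}\,\dd\mu(x)$, which are \eqref{z9}, \eqref{z9a}.

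It remains to prove cyclicity. Under $V$ we have $v_\mu\mapsto\1$ and $w_\mu\mapsto x^{-1}$, so it is equivalent to show $\1$ (resp.\ $x^{-1}$) is cyclic for $\calC_\mu|_F$. The engine is the ``spectral-side Hankel relation'': from $\frac{x}{x+y}+\frac{y}{x+y}=1$ and $\frac{1}{y(x+y)}+\frac{1}{x(x+y)}=\frac{1}{xy}$ one gets, on suitable domains,
\[
\calC_\mu M_x+M_x\calC_\mu=\1\otimes\1,\qquad \calC_\mu M_x^{-1}+M_x^{-1}\calC_\mu=x^{-1}\otimes x^{-1},
\]
where $M_x$ is multiplication by $x$ on $L^2(\mu)$ and $a\otimes a=\jap{\cdot,a}_{L^2(\mu)}a$. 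Since $\Gamma_\mu^\perp$ (hence $\calC_\mu|_F$) has simple spectrum by Theorem~\ref{prp.z1}, it is enough to rule out a non-zero spectral projection $P$ of $\calC_\mu|_F$ with $P\1=0$ (resp.\ $P(x^{-1})=0$). Compressing the corresponding identity by $P$ and using $[P,\calC_\mu]=0$ gives $\calC_\mu C+C\calC_\mu=0$ with $C:=PM_xP\ge0$ (resp.\ $PM_x^{-1}P\ge0$); as $\calC_\mu|_F$ is injective ($\Ker\calC_\mu=F^\perp$), a Rosenblum/Sylvester-type argument --- $C$ intertwines $-\calC_\mu$ with $\calC_\mu$, and $\sigma(-\calC_\mu)\cap\sigma(\calC_\mu)\subseteq\{0\}$ --- forces $C=0$; then $M_x$ (resp.\ $M_x^{-1}$), which is strictly positive since $x>0$ $\mu$-a.e., vanishes on $\Ran P$, so $P=0$. (Equivalently one can argue upstairs with the shift semigroup via the Hankel identity $\Gamma_\mu S_\tau=S_\tau^*\Gamma_\mu$, whose infinitesimal version has a rank-one defect generated by $v_\mu$.)

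The main obstacle is the unbounded-operator bookkeeping in the cyclicity step: $M_x$, $M_x^{-1}$ and $\calC_\mu$ need not be bounded, so the compression $PM_xP$ and the identity $\calC_\mu C+C\calC_\mu=0$ should be read through quadratic forms, and it is cleanest to replace $P$ by $P\,E_{\calC_\mu}([\eps,\eps^{-1}])$ --- still annihilating $\1$, resp.\ $x^{-1}$ --- so that $\calC_\mu|_{\Ran P}$ becomes bounded and bounded below, run the argument there, and let $\eps\downarrow0$. The auxiliary facts that $\calL$ is closed with $\Gamma_\mu=\calL^*\calL$ and that $\calL\calL^*=\calC_\mu$ likewise need a little care with domains but are routine; everything else --- the Fubini identities, the density input from Proposition~\ref{thm.z1}, and the dominated-convergence arguments placing $\1$ and $x^{-1}$ in $F$ --- is elementary.
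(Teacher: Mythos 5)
Your overall route is the same as the paper's: factorize $\Gamma_\mu=\calL^*\calL$, pass by polar decomposition to the model operator $G_\mu=\calL\calL^*$ on $L^2(\mu)$, identify $v_\mu$, $w_\mu$ with (the images of) $\1_\mu$ and $x^{-1}$, verify \eqref{z12}, \eqref{z12a} by the Fubini computation, and drive cyclicity with the Lyapunov identity $X_\mu G_\mu+G_\mu X_\mu=\jap{\bigcdot,\1_\mu}_\mu\1_\mu$. The genuine differences are in organization: the paper proves the integrated Gramian identity \eqref{c4a} (Lemma~\ref{lma.c4}, by differentiating $\norm{X_\mu^{1/2}e^{-tG_\mu}f}_\mu^2$) and reads cyclicity off it in Lemma~\ref{lma.c5}, whereas you compress the Lyapunov equation by a projection annihilating $\1_\mu$ and run a Sylvester-type argument; and for co-finite $\mu$ you propose the dual identity $G_\mu X_\mu^{-1}+X_\mu^{-1}G_\mu=\jap{\bigcdot,\theta_\mu}\theta_\mu$, while the paper reduces to the finite case by the unitary change of variable $x\mapsto1/x$ (Theorem~\ref{thm.c4}) — the paper explicitly remarks that your mirrored route is possible but costlier, since all the domain lemmas would have to be redone for $X_\mu^{-1}$. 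Your treatment of uniqueness, the norm identities \eqref{z9}, \eqref{z9a}, and the Fubini step is fine and essentially matches Lemmas~\ref{l:cyc02} and~\ref{l:cyc03}.

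Two points need repair. First, you invoke Theorem~\ref{prp.z1} to reduce cyclicity to ruling out spectral projections killing $\1_\mu$; but in the paper Theorem~\ref{prp.z1} has no independent proof — it is obtained precisely from the cyclicity statements you are proving — so as written your argument is circular. It is easily fixed: your compression argument uses only that $P$ commutes with the functional calculus of $G_\mu$ and $P\1_\mu=0$, so you may take $P$ to be the projection onto the orthogonal complement of the cyclic subspace generated by $\1_\mu$ and dispense with simplicity altogether. Second, the ``routine bookkeeping'' you defer is in fact the technical core of the paper (Lemmas~\ref{lma.c2} and~\ref{lma.c3a}): your $\eps$-regularization $P_\eps=P\,E_{G_\mu}([\eps,\eps^{-1}])$ makes $G_\mu$ bounded and boundedly invertible on $\Ran P_\eps$, but it does not by itself make the compression $P_\eps X_\mu P_\eps$ meaningful or bounded; for that you need $\Ran G_\mu^{1/2}\subset\Dom X_\mu^{1/2}$ with the bound $\norm{X_\mu^{1/2}G_\mu^{1/2}f}_\mu\leq\mu(\bbR_+)^{1/2}\norm{f}_\mu$, which the paper extracts from the boundedness of $X_\mu G_\mu$ via the Heinz inequality, and likewise the rigorous (form) version of the Lyapunov equation rests on Lemma~\ref{lma.c2}. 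With those ingredients supplied (and re-proved for $X_\mu^{-1}$ in the co-finite case, or with that case rerouted through $x\mapsto 1/x$), your argument closes correctly.
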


We recall (see Proposition~\ref{thm.z1}) that $C^\infty_\comp(\bbR_+)\subset\Dom Q_\mu=\Dom\Gamma_\mu^{1/2}$ and so the left hand sides in \eqref{z12} and \eqref{z12a} are well-defined. 

Using the formal definitions \eqref{eq:z9b} and \eqref{eq:z9bb}, let us give some heuristic explanation of the relations \eqref{z9} and \eqref{z9a}. We have
\[
\norm{v_\mu}^2
=\jap{\Gamma_\mu\delta_0,\delta_0}
=\jap{h_\mu,\delta_0}
=h_\mu(0)=\mu(\bbR_+)
\]
and similarly 
\begin{align*}
\norm{w_\mu}^2
&=\jap{\Gamma_\mu\1,\1}
=\int_0^\infty \int_0^\infty h_\mu(t+s)\dd t\, \dd s
\\
&=\int_0^\infty \int_0^\infty \int_0^\infty e^{-(t+s)x}\dd\mu(x)\, \dd t\, \dd s
=\int_0^\infty \frac{\dd\mu(x)}{x^2}. 
\end{align*}

\subsection{The spectral measures $\sigma$ and $\rho$}
Let $A$ be a self-adjoint operator in a Hilbert space $\calH$. We recall that a (scalar) \emph{spectral measure} $\sigma=\sigma_v$ of $A$ with respect to the element $v$ (whether $v$ is cyclic or not) is uniquely defined by the relation
\begin{equation}
\jap{\varphi(A)v,v}_{\calH}=\int_{\bbR}\varphi(x)\dd\sigma(x), 
\label{eq:defrho}
\end{equation}
where $\varphi$ is any bounded continuous test function, and $\jap{\bigcdot,\bigcdot}_{\calH}$ is the inner product in $\calH$. 

\begin{definition*}[Spectral measures $\sigma$ and $\rho$]
\begin{enumerate}[\rm (i)]
\item
Let $\mu\in\calM_\finite$ and let $v_\mu$ be as described in Theorem~\ref{prp.z2}(i). We define $\sigma$ to be the spectral measure of $\Gamma_\mu$ with respect to the element $v_\mu$: 
\begin{equation}
\jap{\varphi(\Gamma_\mu)v_\mu,v_\mu}
=
\int_0^\infty \varphi(\lambda)\dd\sigma(\lambda),
\label{z10}
\end{equation}
for all bounded continuous functions $\varphi$. 
\item
Let $\mu\in\calM_\cofinite$ and let $w_\mu$ be as described in Theorem~\ref{prp.z2}(ii). We define $\rho$ to be the spectral measure of $\Gamma_\mu$ with respect to the element $w_\mu$: 
\begin{equation}
\jap{\varphi(\Gamma_\mu)w_\mu,w_\mu}
=
\int_0^\infty \varphi(\lambda)\dd\rho(\lambda),
\label{z10a}
\end{equation}
for all bounded continuous functions $\varphi$. 
\end{enumerate}
\end{definition*}
Using the formal definitions \eqref{eq:z9b} and \eqref{eq:z9bb}, we can rewrite \eqref{z10} and \eqref{z10a} in a non-rigorous but illuminating way:
\begin{align}
\jap{\varphi(\Gamma_\mu)\Gamma_\mu\delta_0,\delta_0}
&=
\int_0^\infty \varphi(\lambda)\dd\sigma(\lambda),
\label{z11}
\\
\jap{\varphi(\Gamma_\mu)\Gamma_\mu\1,\1}
&=
\int_0^\infty \varphi(\lambda)\dd\rho(\lambda).
\label{z11a}
\end{align}
In other words, $\dd\sigma(\lambda)/\lambda$ can be understood as the spectral measure of $\Gamma_\mu^\perp$ corresponding to $\delta_0$, and $\dd\rho(\lambda)/\lambda$ as the spectral measure corresponding to $\1$. 

By definition, $\sigma$ and $\rho$ are finite and supported on $\bbR_+$. Since $v_\mu$ and $w_\mu$ are orthogonal to $\Ker\Gamma_\mu$ (see Theorem~\ref{prp.z2}), the measures $\sigma$ and $\rho$ do not have a point mass at the origin. Thus, both $\sigma$ and $\rho$ are in $\calM_\finite$.

\section{Main results}

\subsection{The maps $\Omega$ and $\Omega^{\#}$}\label{sec.z4}
We thus have (nonlinear) \emph{spectral maps} 
\begin{align*}
\Omega:\calM_\finite\to\calM_\finite, \quad  \mu\mapsto \sigma,
\\
\Omega^{\#}:\calM_\cofinite\to\calM_\finite, \quad \mu\mapsto \rho.
\end{align*}
Our first main result is
\begin{theorem}\label{thm.z2}
The spectral map $\Omega$ is an involution on $\calM_\finite$: 
\[
\Omega\circ\Omega=\text{id}.
\] 
In particular, it is a bijection on $\calM_\finite$. Moreover, $\Omega$ preserves the total mass of measures:
\[
\sigma(\bbR_+)=\mu(\bbR_+), \quad\text{ if }\quad \sigma=\Omega(\mu).
\]
\end{theorem}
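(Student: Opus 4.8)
The total-mass identity $\sigma(\bbR_+)=\mu(\bbR_+)$ is immediate from Theorem~\ref{prp.z2}(i) together with the definition of $\sigma$: taking $\varphi\equiv1$ in \eqref{z10} gives $\sigma(\bbR_+)=\norm{v_\mu}^2=\mu(\bbR_+)$. The substance of the theorem is the involutive property $\Omega\circ\Omega=\mathrm{id}$. My plan is to find an explicit unitary intertwining $\Gamma_\mu^\perp$ with $\Gamma_\sigma^\perp$ that sends the distinguished cyclic vector $v_\mu$ to $v_\sigma$ (up to the correct normalization). Since a self-adjoint operator with a cyclic vector is determined up to unitary equivalence by the spectral measure of that vector, such an intertwining would force the spectral measure of $v_\mu$ for $\Gamma_\mu^\perp$ to equal the spectral measure of $v_\sigma$ for $\Gamma_\sigma^\perp$, and hence $\Omega(\Omega(\mu))=\mu$.

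The natural candidate for the intertwining operator comes from the Laplace-transform picture: on the model side, $\Gamma_\mu$ acts (via $Q_\mu$) as multiplication by $x$ on $L^2(\mu)$ after conjugating by $\calL$, while the spectral representation of $\Gamma_\mu^\perp$ with respect to $v_\mu$ realizes it as multiplication by $\lambda$ on $L^2(\sigma)$. Concretely, I expect there is an isometry $U:L^2(\mu)\to L^2(\sigma)$, built out of the spectral theorem, under which the kernel function $h_\mu$ — which represents $\delta_0$ after applying $\Gamma_\mu$ — corresponds to $h_\sigma$, and which swaps the roles of the variable $x$ (the support variable of $\mu$) and $\lambda$ (the spectral variable, i.e. the support variable of $\sigma$). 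The key computation will be to identify the integral kernel of this map and recognize it as symmetric in $x$ and $\lambda$, so that $U^{-1}$ has the same form with $\mu$ and $\sigma$ interchanged; this symmetry is exactly what encodes involutivity. A clean way to organize this is to compute, for test functions, the "resolvent" or moment quantities $\jap{(\Gamma_\mu^\perp)^{-1}\cdots}$ and show the generating function of the moments of $\sigma$ is obtained from that of $\mu$ by a transformation that is its own inverse.

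The main obstacle, I expect, is the unboundedness and the delicate functional-analytic bookkeeping: $\Gamma_\mu$ need not be bounded, $\delta_0\notin L^2$, and $v_\mu$ is only defined weakly through \eqref{z12}, so every formal manipulation with $\Gamma_\mu^{1/2}\delta_0$ has to be justified by approximation using the density of $C^\infty_\comp(\bbR_+)$ in $\Dom Q_\mu$ (Proposition~\ref{thm.z1}) and the norm identity \eqref{z9}. In particular, establishing that the proposed $U$ is genuinely unitary — surjectivity onto $L^2(\sigma)$, not merely isometric — and that it carries $v_\mu$ exactly to $v_\sigma$ (not to $v_\sigma$ composed with some unimodular function, which would still be cyclic but would break the identification of spectral measures), will require care. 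I would handle the surjectivity/cyclicity by invoking that $v_\mu$ is cyclic for $\Gamma_\mu^\perp$ (Theorem~\ref{prp.z2}) so that polynomials-in-$\Gamma_\mu^\perp$ applied to $v_\mu$ are dense, and then checking the intertwining on this dense set.
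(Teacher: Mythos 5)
There is a genuine gap, and it sits exactly at the heart of the theorem. First, a misconception: you write that ``on the model side, $\Gamma_\mu$ acts (via $Q_\mu$) as multiplication by $x$ on $L^2(\mu)$ after conjugating by $\calL$.'' This is false. The factorization $Q_\mu[f]=\norm{Lf}_\mu^2$ gives $\Gamma_\mu=L^*L$, and swapping the factors shows that $\Gamma_\mu^\perp$ is unitarily equivalent to $G_\mu=LL^*$, which is the integral operator on $L^2(\mu)$ with kernel $1/(x+y)$ --- \emph{not} the multiplication operator $X_\mu$. If $\calL$ did conjugate $\Gamma_\mu$ into $X_\mu$, the spectral measure $\sigma$ would essentially be $\mu$ itself and the theorem would be trivial; the example $\Omega(A\delta_a)=A\delta_{A/2a}$ shows this is not the case. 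The whole difficulty is the relation between the two distinct operators $G_\mu$ and $X_\mu$ acting on $L^2(\mu)$: $\mu$ is the spectral measure of $X_\mu$ with respect to $\1_\mu$, while $\sigma$ is the spectral measure of $G_\mu$ with respect to $\1_\mu$, and one must show these two roles can be interchanged.

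Second, the mechanism that accomplishes this interchange is never identified; it is precisely the part you defer with ``I expect'' and ``the key computation will be.'' The paper's engine is the Lyapunov equation $X_\mu G_\mu+G_\mu X_\mu=\jap{\bigcdot,\1_\mu}_\mu\1_\mu$, in which $X_\mu$ and $G_\mu$ enter symmetrically, together with the two Gramian identities
\begin{align*}
\norm{G_\mu^{1/2}f}_\mu^2=\int_0^\infty\abs{\jap{f,e^{-tX_\mu}\1_\mu}_\mu}^2\dd t,
\qquad
\norm{X_\mu^{1/2}f}_\mu^2=\int_0^\infty\abs{\jap{f,e^{-tG_\mu}\1_\mu}_\mu}^2\dd t.
\end{align*}
The first is a trivial restatement of $G_\mu=L_\mu^*L_\mu$, but the second is the hard analytic step (it requires the boundedness of $X_\mu G_\mu$ and $G_\mu X_\mu$, domain lemmas, and an integration of $\frac{d}{dt}\norm{X_\mu^{1/2}e^{-tG_\mu}f}_\mu^2$ against the Lyapunov equation). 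Transporting the second identity through the spectral-representation unitary $U:f(G_\mu)\1_\mu\mapsto f$ and comparing with the first identity written for $\sigma$ is what yields $UX_\mu U^*=G_\sigma$, hence $\Omega(\sigma)=\mu$. Your proposal asserts that the intertwining kernel is ``symmetric in $x$ and $\lambda$'' and that involutivity follows from that symmetry, but no such symmetry is exhibited or justified, and the cyclicity of $v_\mu$ plus the spectral theorem alone cannot produce it: they only give $UG_\mu U^*=X_\sigma$, which is the definition of $\sigma$, not the involution. (Your treatment of the total-mass identity is fine and matches the paper.)
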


We emphasize that this includes three non-trivial statements:
\begin{itemize}
\item
$\Omega$ is an injection, i.e. $\Gamma_\mu$ is uniquely determined by $\sigma$.
\item
$\Omega$ is a surjection, i.e. every measure $\sigma\in\calM_\finite$ is the spectral measure of some Hankel operator $\Gamma_\mu$.
\item
$\mu$ can reconstructed from $\sigma$ by computing the spectral measure of $\Gamma_\sigma$.
\end{itemize}

For our second main result, we will need the involution $\#$ on measures induced by the change of variable $x\mapsto 1/x$ on $\bbR_+$. If $\mu$ is any locally finite positive measure on $\bbR_+$, then the measure $\mu^{\#}$ is defined by 
\[
\int_0^\infty \varphi(x)\dd\mu^{\#}(x)
=
\int_0^\infty \varphi\left(\frac1y\right)\frac{\dd\mu(y)}{y^2}
\]
for bounded continuous test functions $\varphi$ compactly supported on $\bbR_+$. For example, if $\dd\mu(x)=w(x)\dd x$, then $\dd\mu^{\#}(x)=w(1/x)\dd x$. 
It is clear that the involution $\#$ is a bijection between $\calM_\finite$ and $\calM_\cofinite$. 

\begin{theorem}\label{thm.z2a}
We have 
\[
\Omega^{\#}=\Omega\circ \#
\]
on $\calM_\cofinite$. In particular, $\Omega^{\#}$ is a bijection between $\calM_\cofinite$ and $\calM_\finite$ and the inverse is given by 
\[
(\Omega^{\#})^{-1}=\#\circ\Omega.
\]
\end{theorem}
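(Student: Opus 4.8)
The plan is to reduce the co-finite statement to the finite statement (Theorem \ref{thm.z2}) via the change of variable $x\mapsto 1/x$, by identifying how the involution $\#$ acts on the Hankel operators and their distinguished cyclic vectors. The key structural observation is that conjugating $\Gamma_\mu$ by the unitary inversion $U$ on $L^2(\bbR_+)$ defined by $(Uf)(t)=t^{-1}f(1/t)$ (so that $U$ is its own inverse) intertwines $\Gamma_\mu$ with $\Gamma_{\mu^{\#}}$. Concretely, the Laplace-transform characterization of the quadratic form, $Q_\mu[f]=\int_0^\infty|\calL f(x)|^2\dd\mu(x)$, together with the substitution $x=1/y$ inside the integral, should give $Q_\mu[f]=\int_0^\infty|\calL f(1/y)|^2 y^{-2}\dd\mu^{\#}(y)$ after unravelling the definition of $\mu^{\#}$; one then checks that $\calL f(1/y)$ can be rewritten, up to the Jacobian already accounted for, as $\calL(Vf)(y)$ for an explicit unitary $V$ on $L^2(\bbR_+)$ built from the inversion $t\mapsto 1/t$. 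Thus $Q_\mu[f]=Q_{\mu^{\#}}[Vf]$, which means $V\Gamma_\mu V^{-1}=\Gamma_{\mu^{\#}}$. The correct $V$ should be (a version of) $U$ above; I would pin it down by a careful bookkeeping of the two measure changes (the one defining $\mu^{\#}$ and the one in the Laplace variable).

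Granting the unitary equivalence $V\Gamma_\mu V^{-1}=\Gamma_{\mu^{\#}}$, the next step is to track the distinguished cyclic vectors. The formal definition $w_\mu=\Gamma_\mu^{1/2}\1$ and $v_{\mu^{\#}}=\Gamma_{\mu^{\#}}^{1/2}\delta_0$ suggests that $V$ should map $\1$ (morally) to $\delta_0$; more precisely, I expect $V$ to carry the weak characterization \eqref{z12a} of $w_\mu$ onto the weak characterization \eqref{z12} of $v_{\mu^{\#}}$. This must be verified at the level of the defining identities against test functions $g\in C^\infty_\comp(\bbR_+)$: apply $V$ inside the inner products $\jap{w_\mu,\Gamma_\mu^{1/2}g}$, use $V\Gamma_\mu^{1/2}V^{-1}=\Gamma_{\mu^{\#}}^{1/2}$ (which follows from the operator identity by the spectral theorem), change variables in the double integral $\int_0^\infty\int_0^\infty h_\mu(t+s)\overline{g(t)}\dd t\,\dd s$, and match with $\int_0^\infty h_{\mu^{\#}}(t)\overline{(Vg)(t)}\dd t$. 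The norm identities \eqref{z9} and \eqref{z9a} are consistent with this: $\norm{w_\mu}^2=\int_0^\infty x^{-2}\dd\mu(x)=\mu^{\#}(\bbR_+)=\norm{v_{\mu^{\#}}}^2$ by the very definition of $\mu^{\#}$, and unitaries preserve norms. Once $Vw_\mu=v_{\mu^{\#}}$ is established, the spectral measures coincide: for any bounded continuous $\varphi$,
\[
\int_0^\infty\varphi\,\dd\rho
=\jap{\varphi(\Gamma_\mu)w_\mu,w_\mu}
=\jap{\varphi(\Gamma_{\mu^{\#}})v_{\mu^{\#}},v_{\mu^{\#}}}
=\int_0^\infty\varphi\,\dd\Omega(\mu^{\#}),
\]
so $\Omega^{\#}(\mu)=\rho=\Omega(\mu^{\#})=(\Omega\circ\#)(\mu)$, which is the asserted identity. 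The final sentence of the theorem is then immediate: since $\#$ is a bijection $\calM_\cofinite\to\calM_\finite$ (stated in the excerpt) and $\Omega$ is a bijection on $\calM_\finite$ (Theorem \ref{thm.z2}), the composition $\Omega^{\#}=\Omega\circ\#$ is a bijection $\calM_\cofinite\to\calM_\finite$, and $(\Omega^{\#})^{-1}=\#^{-1}\circ\Omega^{-1}=\#\circ\Omega$, using $\#^2=\mathrm{id}$ and $\Omega^2=\mathrm{id}$.

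The main obstacle I anticipate is the first step: getting the unitary $V$ exactly right and rigorously justifying $V\Gamma_\mu V^{-1}=\Gamma_{\mu^{\#}}$ at the level of quadratic forms, including the claim that $V$ maps $\Dom Q_\mu$ bijectively onto $\Dom Q_{\mu^{\#}}$ and that $V$ preserves $C^\infty_\comp(\bbR_+)$ (so that the weak definitions of the cyclic vectors transform cleanly). The Laplace transform does not interact with the inversion $t\mapsto 1/t$ in an entirely transparent way, so the identity $\calL f(1/y)=(\text{Jacobian})\cdot\calL(Vf)(y)$ needs care; it may be cleanest to verify it first on a dense convenient class (e.g. $f\in C^\infty_\comp(\bbR_+)$) and then extend by the density and closedness in Proposition \ref{thm.z1}. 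A secondary technical point is handling the absolute convergence of the double integral in \eqref{z12a} under the change of variables, but the bound \eqref{eq:3} makes this routine. Once $V$ is identified, everything else is bookkeeping built on Theorem \ref{thm.z2} and the spectral theorem.
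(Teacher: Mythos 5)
Your overall strategy (reduce the co-finite case to Theorem~\ref{thm.z2} via the involution $\#$) is the right one and matches the paper's in spirit, and your final bookkeeping of the bijection statement is fine. However, the central step of your plan has a genuine gap: the unitary you propose on $L^2(\bbR_+)$, namely $(Uf)(t)=t^{-1}f(1/t)$, does \emph{not} satisfy $U\Gamma_\mu U^{-1}=\Gamma_{\mu^{\#}}$. A rank-one test makes this concrete: for $\mu=\delta_a$ one has $\Gamma_\mu=\jap{\bigcdot,e_a}e_a$ with $e_a(t)=e^{-at}$, and $\Gamma_{\mu^{\#}}=a^{-2}\jap{\bigcdot,e_{1/a}}e_{1/a}$, but $Ue_a(t)=t^{-1}e^{-a/t}$ is not proportional to $e_{1/a}$. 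Tracing through your own reduction, what is actually needed is a unitary $V$ with $\calL(Vf)(y)=\tfrac1y\,\calL f(1/y)$, and the operator achieving this is not the inversion but the Hankel (Fourier--Bessel) transform $(Vf)(s)=\int_0^\infty J_0(2\sqrt{st})f(t)\,\dd t$ (via the classical identity $\int_0^\infty e^{-sy}J_0(2\sqrt{st})\,\dd s=y^{-1}e^{-t/y}$). With that $V$ the scheme can in principle be completed, but identifying it is the missing idea, and it also breaks several of your subsequent steps as stated: $V$ does not preserve $C^\infty_\comp(\bbR_+)$, so matching the weak characterizations \eqref{z12a} and \eqref{z12} against test functions, and controlling the domains of the unbounded operators under $V$, becomes substantially more delicate than ``bookkeeping.''

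For comparison, the paper sidesteps all of this by performing the inversion in the \emph{model space} $L^2(\mu)$ rather than in $L^2(\bbR_+)$: the unitary $\Theta:L^2(\mu)\to L^2(\mu^{\#})$, $(\Theta f)(x)=f(1/x)/x$, satisfies $\Theta^*G_{\mu^{\#}}\Theta=G_\mu$ and $\Theta\theta_\mu=\1_{\mu^{\#}}$ by a one-line computation, because the kernel $1/(x+y)$ of $G_\mu$ is homogeneous of degree $-1$. Combined with Lemma~\ref{l:cyc03} (which identifies $\rho$ with the spectral measure of $G_\mu$ relative to $\theta_\mu$), this gives $\Omega^{\#}(\mu)=\Omega(\mu^{\#})$ immediately, with no new analysis of unbounded Hankel operators on $L^2(\bbR_+)$. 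If you want to repair your argument, either switch to the model-space inversion or be prepared to develop the Hankel-transform conjugation rigorously.
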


\subsection{Corollaries}
Coming back to the class of \emph{bounded} Hankel operators, we obtain the following corollary of Theorem~\ref{thm.z2}. Denote by $\calM_{\text{Carleson}}\subset\calM_\finite$ the subset of finite measures satisfying the Carleson condition \eqref{z3}, and let $\calM_{\text{bounded}}\subset\calM_\finite$ be the subset of finite measures with a bounded support. 

\begin{corollary}\label{cr.z5}
The spectral map $\Omega$ is an involution on $\calM_{\mathrm{Carleson}}\cap\calM_{\mathrm{bounded}}$. 
\end{corollary}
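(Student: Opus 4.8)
\textbf{Proof proposal for Corollary~\ref{cr.z5}.}

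The plan is to show that the set $\calK:=\calM_{\mathrm{Carleson}}\cap\calM_{\mathrm{bounded}}$ is invariant under the spectral map $\Omega$; combined with Theorem~\ref{thm.z2}, which already gives $\Omega\circ\Omega=\mathrm{id}$ on all of $\calM_\finite$, this immediately yields that $\Omega$ restricts to an involution on $\calK$. So the entire content to be established is: if $\mu\in\calM_\finite$ has bounded support and satisfies the Carleson condition \eqref{z3}, then $\sigma=\Omega(\mu)$ has the same two properties.

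First I would address the bounded support. Since $\mu$ has bounded support, say $\supp\mu\subset[0,R]$, the operator $\Gamma_\mu$ is bounded with $\norm{\Gamma_\mu}\le R\,\mu(\bbR_+)$ (estimate $\abs{\calL f(x)}^2\le\norm{f}^2/(2x)$ is not quite what is wanted; better: bound $Q_\mu[f]=\int_0^R\abs{\calL f(x)}^2\dd\mu(x)$ using that $\calL$ is bounded from $L^2(\bbR_+)$ into the relevant space, or simply note $\supp\mu$ bounded forces the spectrum of $\Gamma_\mu$ to lie in a bounded interval). More to the point, the support of the spectral measure $\sigma$ is contained in the spectrum of $\Gamma_\mu$, which is a compact subset of $[0,\infty)$; hence $\sigma$ has bounded support, i.e. $\sigma\in\calM_{\mathrm{bounded}}$. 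This step is essentially immediate from the spectral theorem.

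The genuinely nontrivial step is the Carleson condition for $\sigma$. Here I would use the characterization that a finite positive measure $\nu$ on $\bbR_+$ with $\nu(\{0\})=0$ satisfies the Carleson condition if and only if $\Gamma_\nu$ is a bounded operator, together with the fact that, by construction, $\dd\sigma(\lambda)/\lambda$ is (up to the kernel issue) the spectral measure of $\Gamma_\mu^\perp$ with respect to $\delta_0$ --- see \eqref{z11}. The idea is that boundedness of $\Gamma_\mu$ (which holds since $\mu$ is Carleson) should translate, via the involution property and the identification of $\sigma$ as a spectral measure of $\Gamma_\mu$, into boundedness of $\Gamma_\sigma$. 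Concretely, one knows $\Gamma_\sigma$ is unitarily equivalent to $\Gamma_\mu$ restricted to an appropriate cyclic subspace: indeed $\Omega$ being an involution says $\Gamma_{\Omega(\sigma)}=\Gamma_\mu$ has spectral measure $\sigma$ with respect to $v_\sigma$, so $\Gamma_\sigma^\perp$ with cyclic vector $v_\sigma$ is unitarily equivalent to the part of $\Gamma_\mu^\perp$ generated by $v_\mu$; since $v_\mu$ is cyclic for $\Gamma_\mu^\perp$, this part is all of $\Gamma_\mu^\perp$. Thus $\Gamma_\sigma^\perp$ is unitarily equivalent to $\Gamma_\mu^\perp$, hence bounded, hence $\Gamma_\sigma$ is bounded, hence $\sigma$ is a Carleson measure by the characterization of boundedness recalled after \eqref{z3}. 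The main obstacle will be making the unitary-equivalence argument airtight at the level of \emph{unbounded} self-adjoint operators and their cyclic subspaces --- one must be careful that the cyclic subspace generated by $v_\mu$ really exhausts $(\Ker\Gamma_\mu)^\perp$, which is exactly the cyclicity asserted in Theorem~\ref{prp.z2}(i), and that restricting a self-adjoint operator to a reducing cyclic subspace preserves both self-adjointness and the property of being bounded. Once this is in place, the corollary follows.
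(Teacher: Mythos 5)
Your high-level plan (show that $\calM_{\mathrm{Carleson}}\cap\calM_{\mathrm{bounded}}$ is $\Omega$-invariant and then invoke Theorem~\ref{thm.z2}) is the same as the paper's, but your argument for the invariance contains a genuine error. The claim ``$\Gamma_\sigma^\perp$ is unitarily equivalent to $\Gamma_\mu^\perp$'' is false. The pair $(\Gamma_\mu^\perp,v_\mu)$ has spectral measure $\sigma$, so $\Gamma_\mu^\perp$ is unitarily equivalent to multiplication by $x$ on $L^2(\sigma)$; the pair $(\Gamma_\sigma^\perp,v_\sigma)$ has spectral measure $\Omega(\sigma)=\mu$, so $\Gamma_\sigma^\perp$ is unitarily equivalent to multiplication by $x$ on $L^2(\mu)$. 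These two operators are unitarily equivalent only when $\mu$ and $\sigma$ are mutually absolutely continuous, which fails even under the hypotheses of the corollary: for $\mu=\delta_1$ one gets $\sigma=\delta_{1/2}$, so $\Gamma_\mu^\perp$ is the scalar $1/2$ while $\Gamma_\sigma^\perp$ is the scalar $1$. (The slip is in the sentence ``$\Gamma_{\Omega(\sigma)}=\Gamma_\mu$ has spectral measure $\sigma$ with respect to $v_\sigma$'': the vector $v_\sigma$ is attached to the operator $\Gamma_\sigma$, not to $\Gamma_\mu$, and its spectral measure for $\Gamma_\sigma$ is $\mu$, not $\sigma$.) Your first step is also misattributed: bounded support of $\mu$ alone does \emph{not} make $\Gamma_\mu$ bounded (take $\dd\mu(x)=x^{-1/2}\dd x$ on $(0,1)$, which is finite with bounded support but violates \eqref{z3}); the boundedness of $\Gamma_\mu$, hence of the spectrum, hence of $\supp\sigma$, comes from the Carleson hypothesis.

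The correct pairing of conditions is \emph{crossed}, and this is exactly what the paper's proof uses. Since $v_\mu$ is cyclic for $\Gamma_\mu^\perp$, the support of $\sigma$ equals the spectrum of $\Gamma_\mu^\perp$; hence $\sigma\in\calM_{\mathrm{bounded}}$ iff $\Gamma_\mu$ is bounded iff $\mu\in\calM_{\mathrm{Carleson}}$. Applying the same equivalence with $\sigma$ in place of $\mu$ and using $\Omega(\sigma)=\mu$ gives $\sigma\in\calM_{\mathrm{Carleson}}$ iff $\mu\in\calM_{\mathrm{bounded}}$ (equivalently: $\Gamma_\sigma^\perp\cong X_\mu$, which is bounded precisely when $\supp\mu$ is bounded). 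Combining the two crossed equivalences shows that $\Omega$ maps $\calM_{\mathrm{Carleson}}\cap\calM_{\mathrm{bounded}}$ onto itself, and the corollary follows from Theorem~\ref{thm.z2}. In short: each of your intermediate conclusions happens to be true under the joint hypotheses, but each is derived from the wrong one of the two hypotheses via an invalid equivalence; the repair is to swap the roles of ``Carleson'' and ``bounded support'' as above.
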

\begin{proof}
As already mentioned, the Carleson condition on $\mu$ is equivalent to the boundedness of $\Gamma_\mu$, i.e. the boundedness of $\Gamma_\mu^\perp$. Furthermore, since $v_\mu$ is a cyclic element for $\Gamma_\mu^\perp$, the boundedness of $\Gamma_\mu^\perp$ is equivalent to the boundedness of the support of the spectral measure $\sigma$. Thus, we have 
\[
\mu\in\calM_{\mathrm{Carleson}}\Leftrightarrow \Omega(\mu)\in \calM_{\mathrm{bounded}}.
\]
Since $\Omega$ is an involution, we also get
\[
\mu\in\calM_{\mathrm{bounded}}\Leftrightarrow \Omega(\mu)\in \calM_{\mathrm{Carleson}}.
\]
It follows that $\Omega$ is a bijection on the set $\calM_{\mathrm{Carleson}}\cap\calM_{\mathrm{bounded}}$, as required.  
\end{proof}
If the reader wants to avoid the technicalities associated with unbounded operators $\Gamma_\mu$ in our construction below, they are invited to keep in mind the class of measures $\calM_{\mathrm{Carleson}}\cap\calM_{\mathrm{bounded}}$ and simply follow the algebra in the proofs.  

There is another interesting special case. Let $\calM_{0}\subset\calM_\finite$ be the set of measures that are finite linear combinations of point masses. As already mentioned, if $\mu\in \calM_{0}$, then $\Gamma_\mu$ is a finite rank operator and so $\sigma\in\calM_{0}$. 
\begin{corollary}\label{cr.z4}
The spectral map $\Omega$ is an involution on $\calM_{0}$.
\end{corollary}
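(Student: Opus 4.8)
The plan is to deduce Corollary~\ref{cr.z4} from Theorem~\ref{thm.z2} together with the rank-finiteness bookkeeping. First I would observe that $\calM_0\subset\calM_{\mathrm{Carleson}}\cap\calM_{\mathrm{bounded}}$, so by Corollary~\ref{cr.z5} the map $\Omega$ is already an involution on the larger set; hence it suffices to check that $\Omega$ maps $\calM_0$ \emph{into} $\calM_0$, and then involutivity on $\calM_0$ is inherited. The containment $\Omega(\calM_0)\subset\calM_0$ is the only real content: if $\mu$ is a finite linear combination of point masses, say $\mu=\sum_{k=1}^N c_k\delta_{\alpha_k}$ with $c_k>0$ and distinct $\alpha_k>0$, then $h_\mu(t)=\sum_k c_k\ee^{-\alpha_k t}$ lies in the $N$-dimensional span of the exponentials $\ee^{-\alpha_k t}$, and one checks directly from the quadratic form $Q_\mu[f]=\sum_k c_k|\calL f(\alpha_k)|^2$ that $\Gamma_\mu$ is a rank-$N$ operator (its range is contained in, and in fact equals, $\Span\{\ee^{-\alpha_k t}\}$, since the $N\times N$ Gram-type matrix $(c_k\,(\alpha_j+\alpha_k)^{-1})$ is nonsingular). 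Therefore $\Gamma_\mu^\perp$ acts on an $N$-dimensional space, its spectrum is a finite set of $\leq N$ positive eigenvalues, and the spectral measure $\sigma$ of the cyclic vector $v_\mu$ is a finite linear combination of point masses supported on those eigenvalues; since $\sigma$ is automatically in $\calM_\finite$ and has no atom at $0$, we conclude $\sigma\in\calM_0$.

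The steps, in order, are: (1) write $\mu\in\calM_0$ explicitly and compute $h_\mu$ and $Q_\mu$; (2) show $\Gamma_\mu$ has finite rank $N$ by exhibiting its range as $\Span\{\ee^{-\alpha_k t}:1\le k\le N\}$ and invoking nonsingularity of the associated Gram matrix — this is the standard Cauchy-matrix computation, $\det\big((\alpha_j+\alpha_k)^{-1}\big)\ne0$; (3) conclude the spectrum of $\Gamma_\mu^\perp$ is finite, so $\sigma=\sum_j s_j\delta_{\lambda_j}$ with $\lambda_j>0$ the eigenvalues and $s_j\geq0$ the squared components of $v_\mu$ along the eigenvectors, whence $\sigma\in\calM_0$; (4) combine with Corollary~\ref{cr.z5}: since $\calM_0$ is $\Omega$-invariant and $\Omega\circ\Omega=\mathrm{id}$ holds on all of $\calM_\finite$ (or already on $\calM_{\mathrm{Carleson}}\cap\calM_{\mathrm{bounded}}\supset\calM_0$), it holds in particular on $\calM_0$.

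I do not expect a serious obstacle here; the corollary is essentially a consistency check that the finite-rank world is closed under the spectral map, which is intuitively clear and classical. The one point requiring a line of care is the justification that $\rank\Gamma_\mu=N$ exactly (not merely $\leq N$): this follows because the quadratic form $Q_\mu[f]=\sum_k c_k|\calL f(\alpha_k)|^2$ vanishes only when $\calL f(\alpha_k)=0$ for all $k$, and the functionals $f\mapsto\calL f(\alpha_k)$ are linearly independent on $L^2(\bbR_+)$ (their Riesz representers $\ee^{-\alpha_k t}$ are linearly independent), so the kernel of $\Gamma_\mu$ has codimension exactly $N$. Equivalently, one can note that $\mu$ purely atomic with \emph{finitely} many atoms does not satisfy the Blaschke condition \eqref{z0a} in the degenerate sense needed for an infinite-dimensional kernel, so by Proposition~\ref{prp.z1d} the kernel is trivial on the relevant $N$-dimensional reducing subspace — but the direct Cauchy-matrix argument is cleaner and self-contained. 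Once $\rank\Gamma_\mu^\perp=N$ is in hand, the rest is immediate.
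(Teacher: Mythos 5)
Your proposal is correct and follows essentially the same route as the paper: show that $\mu\in\calM_0$ forces $\Gamma_\mu$ to have finite rank, hence $\sigma=\Omega(\mu)\in\calM_0$, and then invoke the involution property of $\Omega$ on $\calM_\finite$ from Theorem~\ref{thm.z2}. The paper simply asserts the finite-rank step as ``straightforward,'' whereas you supply the (correct) Cauchy-matrix details; nothing further is needed.
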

\begin{proof}
It is straightforward to see that  $\mu\in\calM_{0}$ implies that $\Gamma_\mu$ is finite rank, and so $\sigma\in\calM_{0}$. Thus $\Omega(\calM_{0})\subset \calM_{0}$. Since $\Omega$ is an involution on $\calM_\finite$, we obtain that $\Omega(\calM_{0})=\calM_{0}$ and $\Omega$ is an involution on $\calM_{0}$.
\end{proof}

The fact that $\Omega$ is a bijection on $\calM_{0}$ is a consequence of the earlier work by O.~Pocovnicu \cite{Pocovnicu}, who considered a more general class of finite rank Hankel operators (not necessarily self-adjoint). She worked in the Hardy space of the upper half-plane rather than in $L^2(\bbR_+)$, and so her results are expressed in a very different language. See also subsequent work \cite{GP}. 

\subsection{The case $\mu\in \calM_\finite\cap\calM_\cofinite$}
It is interesting to consider the special case when the measure $\mu$ is both finite and co-finite.
Then both $v_\mu$ and $w_\mu$ are cyclic elements for $\Gamma_\mu$, and thus both measures $\sigma=\Omega(\mu)$ and $\rho=\Omega^{\#}(\mu)$ are well defined. What is the relationship between $\sigma$ and $\rho$? 

From Theorem~\ref{thm.z2} we find that $\mu=\Omega(\sigma)$ and from Theorem~\ref{thm.z2a} we have
\[
\rho=\Omega\circ\#(\mu)=\Omega\circ\#\circ \Omega(\sigma).
\]
This is the relationship between $\sigma$ and $\rho$. 

Note that the naive guess that $\Omega\circ\#\circ \Omega(\sigma)=\sigma^\#$ is \textbf{wrong}. Indeed, both $\sigma$ and $\rho$ are supported on the same set, i.e. the spectrum of $\Gamma_\mu$. 
If we had $\rho=\sigma^{\#}$, then $\rho$ and $\sigma$ would have been supported on different sets.

\subsection{Some properties of the spectral map $\Omega$}
We have already seen that $\Omega$ preserves the total mass of measures. Let us discuss other simple properties of $\Omega$. To give some intuition, we note that $\Omega$ acts on measures supported at a single point as follows. Let $\delta_a$ be the probability measure which is a single point mass at $x=a$. Then 
\[
\Omega: A\delta_a\mapsto A\delta_{A/2a}, \quad a>0,\quad A>0.
\]
Returning to general measures, let us list the behavior of $\Omega$ with respect to the scaling of $\mu$. For $\mu\in\calM_\finite$ and $\tau>0$, we denote by $\mu_\tau$ the scaled measure defined by 
\[
\int_0^\infty \varphi(x)\dd\mu_\tau(x)=\int_0^\infty \varphi(\tau x)\dd\mu(x)
\]
for bounded continuous test functions $\varphi$. For example, if $\dd\mu(x)=w(x)\dd x$, then $\dd\mu_\tau(x)=\frac1{\tau}w(x/\tau)\dd x$. 
\begin{theorem}\label{thm.z6}
For $\mu\in\calM_{\finite}$, let $\sigma=\Omega(\mu)$ and let $\tau>0$. Then
\[
\Omega(\tau\mu)=\tau\sigma_\tau\quad\text{ and }\quad\Omega(\mu_\tau)=\sigma_{1/\tau}.
\]
\end{theorem}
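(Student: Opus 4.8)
The plan is to trace the two scaling operations through the construction of the spectral measure, exploiting the natural unitary dilation on $L^2(\bbR_+)$. For $\tau>0$, let $U_\tau$ be the unitary operator on $L^2(\bbR_+)$ defined by $(U_\tau f)(t)=\tau^{1/2}f(\tau t)$. The key algebraic observation is how $U_\tau$ intertwines Hankel operators: a direct computation with the quadratic form $Q_\mu$ shows that $\calL(U_\tau f)(x)=\tau^{-1/2}\calL f(x/\tau)$, and hence
\[
Q_\mu[U_\tau f]=\int_0^\infty \abs{\calL(U_\tau f)(x)}^2\dd\mu(x)
=\tau^{-1}\int_0^\infty \abs{\calL f(x/\tau)}^2\dd\mu(x)
=\tau^{-1}Q_{\mu_{1/\tau}}[f].
\]
This gives the operator identity $U_\tau^*\,\Gamma_\mu\,U_\tau=\tau^{-1}\Gamma_{\mu_{1/\tau}}$ (on form domains, hence as self-adjoint operators), and consequently $U_\tau^*\,\Gamma_\mu^{1/2}\,U_\tau=\tau^{-1/2}\Gamma_{\mu_{1/\tau}}^{1/2}$. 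Since $U_\tau$ maps $C^\infty_\comp(\bbR_+)$ onto itself, it maps $\Ker\Gamma_\mu$ onto $\Ker\Gamma_{\mu_{1/\tau}}$ and $\overline{\Ran\Gamma_\mu}$ onto $\overline{\Ran\Gamma_{\mu_{1/\tau}}}$.

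Next I would identify how the distinguished cyclic vector transforms. Using the defining relation \eqref{z12} for $v_\mu$ and the change of variables $t\mapsto \tau t$ together with $h_{\mu_{1/\tau}}(t)=h_\mu(t/\tau)$, one checks that $U_\tau^* v_\mu$ satisfies the defining relation for $v_{\mu_{1/\tau}}$ up to an explicit scalar factor; since the norms must match via \eqref{z9} (note $\mu_{1/\tau}(\bbR_+)=\mu(\bbR_+)$), this pins down $U_\tau^* v_\mu = v_{\mu_{1/\tau}}$ exactly. Now combine this with the operator identity: for any bounded continuous $\varphi$,
\[
\jap{\varphi(\Gamma_{\mu_{1/\tau}})v_{\mu_{1/\tau}},v_{\mu_{1/\tau}}}
=\jap{\varphi(\tau U_\tau^*\Gamma_\mu U_\tau)\,U_\tau^* v_\mu,\,U_\tau^* v_\mu}
=\jap{\varphi(\tau\Gamma_\mu)v_\mu,v_\mu}
=\int_0^\infty \varphi(\tau\lambda)\dd\sigma(\lambda),
\]
which says precisely that $\Omega(\mu_{1/\tau})=\sigma_\tau$. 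Replacing $\tau$ by $1/\tau$ gives the second stated identity $\Omega(\mu_\tau)=\sigma_{1/\tau}$.

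For the first identity $\Omega(\tau\mu)=\tau\sigma_\tau$, I would combine the homogeneity of $\Omega$ in the overall mass with the dilation just established. Multiplying $\mu$ by the scalar $\tau$ multiplies $h_\mu$, hence the form $Q_\mu$, hence $\Gamma_\mu$ and $v_\mu$'s squared norm, all by $\tau$; tracking this through \eqref{z12} and \eqref{z10} shows $\Omega(\tau\mu)=\tau\,(\Omega(\mu))_\tau$, i.e. the spectral parameter rescales by $\tau$ (because $\Gamma$ itself is multiplied by $\tau$) while the total mass is multiplied by $\tau$. Concretely one writes $v_{\tau\mu}=\tau^{1/2}$ times the image of $v_\mu$ under a trivial rescaling and verifies that $\jap{\varphi(\Gamma_{\tau\mu})v_{\tau\mu},v_{\tau\mu}}=\tau\int\varphi(\tau\lambda)\dd\sigma(\lambda)$, which is $\int\varphi\,\dd(\tau\sigma_\tau)$. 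The only real subtlety — the step I expect to be the main obstacle — is justifying that $U_\tau^* v_\mu=v_{\mu_{1/\tau}}$ exactly (and not merely up to a scalar): this requires carefully matching both the weak defining relation \eqref{z12} and the normalization \eqref{z9}, and checking that the uniqueness clause in Theorem~\ref{prp.z2}(i) applies since $U_\tau^* v_\mu\in\overline{\Ran\Gamma_{\mu_{1/\tau}}}$. Everything else is bookkeeping with changes of variables in the Laplace transform.
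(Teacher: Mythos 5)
Your proposal is correct. It differs from the paper's proof only in the choice of representation: you work on the ``time side'', in $L^2(\bbR_+)$, with the Hankel operator $\Gamma_\mu$ and the distinguished vector $v_\mu$, conjugating by the dilation $(U_\tau f)(t)=\tau^{1/2}f(\tau t)$, whereas the paper works in the model space $L^2(\mu)$ with the operator $G_\mu$ and the cyclic vector $\1_\mu$: for $\Omega(\tau\mu)=\tau\sigma_\tau$ it uses the immediate relation $G_{\tau\mu}=\tau G_\mu$, and for $\Omega(\mu_\tau)=\sigma_{1/\tau}$ it conjugates by $(U_\tau f)(x)=f(x/\tau)$ from $L^2(\mu)$ to $L^2(\mu_\tau)$. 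The algebra is the same in both pictures; what the model-space route buys is that the cyclic vector is the constant function, so its behaviour under scaling and dilation is trivial ($U_\tau\1_\mu=\1_{\mu_\tau}$ needs no argument), while in your picture the step you correctly flag as the main obstacle --- identifying $U_\tau^*v_\mu$ with $v_{\mu_{1/\tau}}$ --- requires returning to the weak characterization \eqref{z12} and the uniqueness clause of Theorem~\ref{prp.z2}(i). That identification does go through, and in fact the scalar factor you anticipate turns out to be $1$: a change of variables in \eqref{z12}, together with $h_{\mu_{1/\tau}}(t)=h_\mu(t/\tau)$ and $U_\tau^*\,\overline{\Ran\Gamma_\mu}=\overline{\Ran\Gamma_{\mu_{1/\tau}}}$, shows that $U_\tau^*v_\mu$ satisfies the defining relation for $v_{\mu_{1/\tau}}$ on the nose, so the norm check via \eqref{z9} is only a consistency check. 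Your argument is therefore complete; it is just slightly more laborious than passing through $L^2(\mu)$.
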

Next, we give two identities which are essentially trace formulas for $\Gamma_\mu$. 
\begin{theorem}\label{thm.z7}
For $\mu\in\calM_{\finite}$, let $\sigma=\Omega(\mu)$. The following statements are equivalent:
\begin{enumerate}[\rm (i)]
\item
$\Gamma_\mu$ is trace class;
\item
one has 
\[
\int_0^\infty \frac{\dd\mu(x)}{x}<\infty;
\]
\item
$\sigma$ is a purely atomic measure supported on a summable sequence of points $\{a_k\}_{k=1}^\infty$.
\end{enumerate}
If conditions {\rm(i)--(iii)} hold true, then we have the trace formula
\begin{equation}
\Tr\Gamma_\mu=\frac12\int_0^\infty\frac{\dd\mu(x)}{x}=\sum_{k=1}^\infty a_k.
\label{z15}
\end{equation}
\end{theorem}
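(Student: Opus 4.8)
The plan is to establish the equivalences by working with the spectral measure $\sigma$ and exploiting the fact that $v_\mu$ is cyclic for $\Gamma_\mu^\perp$. First I would record the abstract fact that, for a non-negative self-adjoint operator $A$ with simple spectrum and cyclic vector, $A$ is trace class if and only if the scalar spectral measure is purely atomic with summable atoms, and in that case $\Tr A$ equals the sum of the atoms. More precisely, since $\dd\sigma(\lambda)/\lambda$ is the spectral measure of $\Gamma_\mu^\perp$ with respect to (the rigorous version of) $\delta_0$, the operator $\Gamma_\mu^\perp$ has eigenvalues exactly at the atoms $\{a_k\}$ of $\sigma$, each with multiplicity one (simplicity), and no continuous spectrum precisely when $\sigma$ is purely atomic; thus $\Gamma_\mu$ is trace class iff $\sigma$ is a purely atomic measure with $\sum_k a_k < \infty$, which is the equivalence (i) $\Leftrightarrow$ (iii), and then $\Tr \Gamma_\mu = \sum_k a_k$.

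Next I would connect (i) with (ii). The cleanest route is to compute $\Tr \Gamma_\mu$ directly in terms of $\mu$ when $\Gamma_\mu$ is trace class. Using the quadratic form representation $Q_\mu[f] = \int_0^\infty |\calL f(x)|^2 \dd\mu(x)$, one sees that $\Gamma_\mu = \calL^* M_\mu \calL$ in an appropriate sense, where $M_\mu$ is multiplication by $1$ on $L^2(\dd\mu)$; then $\Tr \Gamma_\mu = \Tr(\calL \calL^* M_\mu)$, and $\calL\calL^*$ is the integral operator on $L^2(\mathbb{R}_+,\dd\mu)$ with kernel $\int_0^\infty e^{-(x+y)t}\dd t = 1/(x+y)$, i.e. the Stieltjes-type operator. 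Taking the "diagonal" gives $\Tr \Gamma_\mu = \int_0^\infty \frac{\dd\mu(x)}{2x}$, which is finite iff $\int_0^\infty x^{-1}\dd\mu(x) < \infty$, giving (i) $\Leftrightarrow$ (ii) together with the first equality in \eqref{z15}. Alternatively, and perhaps more safely given the care needed with unbounded operators, I would approximate: truncate $\mu$ to $\mu_n := \mu|_{(1/n, n)}$, for which $\Gamma_{\mu_n}$ is bounded and finite-support, note $\Gamma_{\mu_n} \nearrow \Gamma_\mu$ in the form sense, use monotone convergence for traces of non-negative operators ($\Tr \Gamma_\mu = \lim_n \Tr \Gamma_{\mu_n}$), and compute $\Tr \Gamma_{\mu_n}$ via the kernel: $\Tr \Gamma_{\mu_n} = \int_0^\infty h_{\mu_n}(2t)\,\dd t$ — but one must be cautious since the diagonal-of-the-kernel formula for the trace requires justification; the rigorous substitute is $\Tr \Gamma_{\mu_n} = \int_0^\infty \frac{\dd\mu_n(x)}{2x}$ obtained from the Laplace-transform factorization, which is unambiguous for these nice truncations.

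The main obstacle I anticipate is the rigorous justification of the trace formula $\Tr \Gamma_\mu = \frac12\int_0^\infty x^{-1}\,\dd\mu(x)$ when $\Gamma_\mu$ is a priori only defined via a closed quadratic form and may be unbounded: one cannot naively read off the trace from the integral kernel $h_\mu(t+s)$ on the diagonal, since $h_\mu$ may be singular at $0$ and the operator need not even be bounded. The approximation argument above handles this, provided one verifies (a) that $\Gamma_{\mu_n} \to \Gamma_\mu$ in the strong resolvent sense, or better, that the forms increase monotonically to $Q_\mu$ so that the traces of the non-negative operators converge monotonically (this is a standard consequence of the monotone convergence theorem for forms combined with $\Tr A = \sum_j \langle A e_j, e_j\rangle$ for any orthonormal basis and non-negative $A$), and (b) that $\Tr \Gamma_{\mu_n} = \frac12 \int \frac{\dd\mu_n(x)}{x}$, which for the bounded operator $\Gamma_{\mu_n} = \calL^* M_{\mu_n} \calL$ follows from $\Tr(\calL^* M_{\mu_n}\calL) = \Tr(M_{\mu_n}^{1/2}\calL \calL^* M_{\mu_n}^{1/2})$ and the explicit kernel $1/(x+y)$ of $\calL\calL^*$ restricted to $L^2(\dd\mu_n)$. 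Passing $n\to\infty$ by monotone convergence on both sides then yields the formula in general and simultaneously proves (i) $\Leftrightarrow$ (ii). Combining with the spectral-measure argument of the first paragraph closes the chain of equivalences and establishes \eqref{z15}.
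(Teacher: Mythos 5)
Your proposal is correct and follows essentially the same route as the paper: the equivalence (i)$\Leftrightarrow$(iii) and the identity $\Tr\Gamma_\mu=\sum_k a_k$ come from the unitary equivalence of $\Gamma_\mu^\perp$ with multiplication by the independent variable on $L^2(\sigma)$ (the paper phrases this as $\Gamma_\mu^\perp\cong G_\mu\cong X_\sigma$ via Theorem~\ref{thm.c7}), while (i)$\Leftrightarrow$(ii) comes from the factorization $\Gamma_\mu=L^*L$. The paper sidesteps the diagonal-of-the-kernel issue you worry about by evaluating $\Tr L^*L$ directly as the Hilbert--Schmidt norm of $L$, i.e.\ $\int_0^\infty\int_0^\infty e^{-2tx}\,\dd\mu(x)\,\dd t=\frac12\int_0^\infty x^{-1}\,\dd\mu(x)$, an identity valid in $[0,\infty]$, so your truncation argument, while sound, is not needed.
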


\begin{theorem}\label{thm.z8}
For $\mu\in\calM_{\finite}$, let $\sigma=\Omega(\mu)$. The following statements are equivalent:
\begin{enumerate}[\rm (i)]
\item
$\Gamma_\mu$ is Hilbert-Schmidt;
\item
one has 
\[
\int_0^\infty \int_0^\infty \frac{\dd\mu(x)\dd\mu(y)}{(x+y)^2}<\infty;
\]
\item
$\sigma$ is a purely atomic measure supported on a square-summable sequence of points $\{a_k\}_{k=1}^\infty$.
\end{enumerate}
If conditions {\rm(i)--(iii)} hold true, then we have the trace formula
\begin{equation}
\Tr\Gamma_\mu^2
=\int_0^\infty \int_0^\infty \frac{\dd\mu(x)\dd\mu(y)}{(x+y)^2}
=\sum_{k=1}^\infty a_k^2.
\label{z17}
\end{equation}
\end{theorem}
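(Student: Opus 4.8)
The plan is to establish the two equivalences \mbox{(i)$\Leftrightarrow$(ii)} and \mbox{(i)$\Leftrightarrow$(iii)} separately, reading off the trace formula \eqref{z17} along the way. The equivalence with (ii) comes from a direct computation of the Hilbert--Schmidt norm of $\Gamma_\mu$ in terms of its integral kernel $h_\mu(t+s)$, and the equivalence with (iii) from the spectral representation of $\Gamma_\mu^\perp$ provided by the cyclic vector $v_\mu$.

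\textbf{Step 1.} First I would show that $\Gamma_\mu$ is Hilbert--Schmidt if and only if $h_\mu(t+s)\in L^2(\bbR_+\times\bbR_+)$, and that in this case
\begin{align*}
\Tr\Gamma_\mu^2=\|\Gamma_\mu\|_{\mathrm{HS}}^2
&=\int_0^\infty\int_0^\infty|h_\mu(t+s)|^2\,\dd t\,\dd s\\
&=\int_0^\infty\int_0^\infty\Big(\int_0^\infty\int_0^\infty e^{-(t+s)(x+y)}\,\dd t\,\dd s\Big)\dd\mu(x)\,\dd\mu(y)\\
&=\int_0^\infty\int_0^\infty\frac{\dd\mu(x)\,\dd\mu(y)}{(x+y)^2},
\end{align*}
where in the middle line we inserted \eqref{z2} and used Tonelli's theorem (all integrands are non-negative). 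This already gives \mbox{(i)$\Leftrightarrow$(ii)} and the first equality in \eqref{z17}. The point needing care is the identification of the form-operator $\Gamma_\mu$ of Definition~\ref{def:1} with the integral operator $T$ having kernel $h_\mu(t+s)$: when $\int_0^\infty\int_0^\infty(x+y)^{-2}\dd\mu(x)\,\dd\mu(y)<\infty$, the displayed computation shows $h_\mu(t+s)\in L^2$, so $T$ is a bounded (indeed Hilbert--Schmidt) self-adjoint operator, and the quadratic-form identity recorded just before Proposition~\ref{thm.z1} gives $\jap{Tf,g}=Q_\mu(f,g)$ for $f,g\in C^\infty_\comp(\bbR_+)$; since this set is a form core for the closed form $Q_\mu$, we conclude $\Gamma_\mu=T$. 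Conversely, if $\Gamma_\mu$ is Hilbert--Schmidt, its $L^2$ integral kernel must agree a.e.\ with $h_\mu(t+s)$ by the same form identity on $C^\infty_\comp(\bbR_+)$, so $h_\mu(t+s)\in L^2$.

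\textbf{Step 2.} By Theorem~\ref{prp.z2}(i) the vector $v_\mu$ is cyclic for $\Gamma_\mu^\perp$ with spectral measure $\sigma$, which carries no mass at the origin; hence $\Gamma_\mu^\perp$ is unitarily equivalent to the operator $M_\lambda$ of multiplication by the independent variable on $L^2(\bbR_+,\sigma)$. Since $\Gamma_\mu$ is Hilbert--Schmidt iff $\Gamma_\mu^\perp$ is, it remains to describe when $M_\lambda$ is Hilbert--Schmidt. The only eigenvectors of $M_\lambda$ are the indicators $\1_{\{a\}}$ of atoms $a$ of $\sigma$, and they form a complete orthogonal system precisely when $\sigma$ is purely atomic; therefore $M_\lambda$ is compact iff $\sigma$ is purely atomic. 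In that case, letting $\{a_k\}$ denote the (at most countably many, since $\sigma$ is finite) atoms of $\sigma$, the operator $M_\lambda$ is unitarily equivalent to $\diag(a_k)$, which is Hilbert--Schmidt iff $\sum_k a_k^2<\infty$, with $\Tr M_\lambda^2=\sum_k a_k^2$. This yields \mbox{(i)$\Leftrightarrow$(iii)} and, together with Step~1, completes \eqref{z17}.

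\textbf{Main obstacle.} The only genuinely technical point is inside Step~1: justifying that the abstractly (form-)defined operator $\Gamma_\mu$ coincides with the integral operator with kernel $h_\mu(t+s)$, together with the Tonelli interchange. This is the same issue as — but a much milder one than — the one arising in the trace-class statement Theorem~\ref{thm.z7}, where one cannot naively integrate the kernel along the diagonal; here the $L^2$ theory of Hilbert--Schmidt operators makes the identification painless. Accordingly I would prove the present theorem first and then treat Theorem~\ref{thm.z7} by the same scheme, with the additional trace-class bookkeeping handled separately.
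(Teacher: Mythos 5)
Your proposal is correct, but it takes a somewhat different route from the paper for the equivalence (i)$\Leftrightarrow$(ii). The paper never touches the kernel $h_\mu(t+s)$ in $L^2(\bbR_+)$ at this stage: it uses the already-established unitary equivalences $\Gamma_\mu^\perp\cong G_\mu\cong X_\sigma$ and simply reads off $\Tr\Gamma_\mu^2=\Tr G_\mu^2=\int_0^\infty\int_0^\infty(x+y)^{-2}\,\dd\mu(x)\dd\mu(y)$, since $G_\mu$ is the integral operator in $L^2(\mu)$ with kernel $1/(x+y)$ (Lemma~\ref{lma.c1}), and then $\Tr\Gamma_\mu^2=\Tr X_\sigma^2=\sum_k a_k^2$ for (i)$\Leftrightarrow$(iii). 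You instead compute $\norm{\Gamma_\mu}_{\mathrm{HS}}^2$ directly in $L^2(\bbR_+)$ by expanding $h_\mu(t+s)$ via the Laplace representation and Tonelli, which forces you to do the extra (but genuine and correctly sketched) work of identifying the form-defined $\Gamma_\mu$ with the integral operator $T$ with kernel $h_\mu(t+s)$, using that $C^\infty_\comp(\bbR_+)$ is a form core; the paper's route buys itself this identification for free because the kernel representation of $G_\mu$ on the core $\calD_\mu$ is already proved, while your route has the merit of being self-contained in the original space and of making the first equality in \eqref{z17} an explicit Tonelli computation. Your Step 2 is essentially the paper's argument in disguise: invoking Theorem~\ref{prp.z2}(i) and the spectral theorem with cyclic vector $v_\mu$ gives exactly the unitary equivalence $\Gamma_\mu^\perp\cong X_\sigma$ that the paper obtains through Theorem~\ref{thm.c7}.

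One small inaccuracy, not load-bearing: your intermediate claim that $M_\lambda$ on $L^2(\sigma)$ is compact \emph{iff} $\sigma$ is purely atomic is false in the ``if'' direction (a finite purely atomic $\sigma$ may have atoms accumulating at a positive point, e.g.\ at $1+1/k$ with masses $2^{-k}$, and then $M_\lambda$ is not compact). What your argument actually uses is only the two correct implications: Hilbert--Schmidt $\Rightarrow$ compact $\Rightarrow$ $\sigma$ purely atomic (completeness of eigenvectors), and purely atomic with $\sum_k a_k^2<\infty$ $\Rightarrow$ $M_\lambda\cong\diag(a_k)$ Hilbert--Schmidt; so the equivalence (i)$\Leftrightarrow$(iii) and the identity $\Tr\Gamma_\mu^2=\sum_k a_k^2$ stand, but you should drop or weaken the ``iff'' about compactness.
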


\subsection{The continuity of $\Omega$}
Finally, we prove the continuity of $\Omega$ with respect to the weak convergence of measures. Let $\{\mu_n\}_{n=1}^\infty$ and $\mu$ be measures in $\calM_\finite$; we recall that (see e.g. \cite[Section 30]{Bauer}) $\mu_n\to\mu$ weakly as $n\to\infty$ means that for any function $f$ bounded and continuous on $[0,\infty)$ we have 
\[
\int_0^\infty f(x)\dd\mu_n(x)\to \int_0^\infty f(x)\dd\mu(x), \quad n\to\infty.
\]
In particular, taking $f=1$ this means that $\mu_n(\bbR_+)\to\mu(\bbR_+)$. 
(Weak convergence is really the weak-$*$ convergence with measures considered as linear functionals on  $C\ti{b}([0,\infty))$.)
\begin{theorem}\label{thm.z9}
If $\mu_n\to\mu$ weakly, then $\Omega(\mu_n)\to\Omega(\mu)$ weakly.  
\end{theorem}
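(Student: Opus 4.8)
The plan is to reduce the statement to the pointwise convergence of the Cauchy transforms of the measures $\sigma_n:=\Omega(\mu_n)$, and to prove that convergence by a compactness argument carried out in the spaces $L^2(\mu_n)$. Write $\sigma=\Omega(\mu)$ and, for $z\in\bbC$ with $\Im z\neq0$,
\[
F_\mu(z):=\int_0^\infty\frac{\dd\sigma(\lambda)}{\lambda-z}=\jap{(\Gamma_\mu-z)^{-1}v_\mu,v_\mu},
\]
the second equality being \eqref{z10} applied to the bounded continuous function $\varphi(\lambda)=(\lambda-z)^{-1}$, and likewise for each $\mu_n$. By Theorem~\ref{thm.z2} the total masses converge, $\sigma_n(\bbR_+)=\mu_n(\bbR_+)\to\mu(\bbR_+)=\sigma(\bbR_+)$, so $\{\sigma_n\}$ is bounded and, after passing to a subsequence, converges vaguely on the compactification $[0,\infty]$ to some measure $\nu$. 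Suppose I can prove $F_{\mu_n}(z)\to F_\mu(z)$ for every $z$ with $\Im z\neq0$. Then the Cauchy transform of $\nu|_{[0,\infty)}$ equals $F_\mu$, and the total mass of $\nu$ equals $\sigma(\bbR_+)$; since a finite measure on $[0,\infty)$ is determined by its Cauchy transform, this forces $\nu|_{[0,\infty)}=\sigma$ and $\nu(\{\infty\})=0$, i.e. $\nu=\sigma$. As every vague limit point of $\{\sigma_n\}$ is then $\sigma$, we get $\sigma_n\to\sigma$ vaguely on $[0,\infty]$, and a standard tightness argument upgrades this to weak convergence against $C\ti{b}([0,\infty))$. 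So everything reduces to proving $F_{\mu_n}(z)\to F_\mu(z)$.

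The key device is to move the problem into $L^2(\mu)$. Let $J_\mu\colon L^2\to L^2(\mu)$ be the (in general unbounded) closed operator $J_\mu f=\calL f$ with domain $\Dom Q_\mu$, so that $\Gamma_\mu=J_\mu^*J_\mu$, and set $M_\mu:=J_\mu J_\mu^*$, which acts on $L^2(\mu)$ as the integral operator with the kernel $(x+y)^{-1}$:
\[
(M_\mu\phi)(y)=\int_0^\infty\frac{\phi(x)}{x+y}\,\dd\mu(x).
\]
Since $\mu$ is finite, the constant function $\1$ lies in $L^2(\mu)$. Using \eqref{z12}, Fubini's theorem, and the polar decomposition $J_\mu=W_\mu|J_\mu|$ with $|J_\mu|=\Gamma_\mu^{1/2}$, one checks that the element $v_\mu$ of Theorem~\ref{prp.z2}(i) equals $W_\mu^*\1$, that $W_\mu$ carries the non-zero part of $\Gamma_\mu$ unitarily onto the non-zero part of $M_\mu$, and that $\1\perp\Ker M_\mu$. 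Together with the intertwining $\varphi(J_\mu^*J_\mu)J_\mu^*=J_\mu^*\varphi(J_\mu J_\mu^*)$ this shows that $\sigma$ is exactly the spectral measure of $M_\mu$ with respect to $\1$, hence
\[
F_\mu(z)=\jap{(M_\mu-z)^{-1}\1,\1}_{L^2(\mu)},\qquad \Im z\neq0,
\]
and similarly with $\mu$ replaced by $\mu_n$. This is the identity to which the limiting argument is applied; establishing it is the one place where some care with the unbounded operators is needed.

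So fix $z$ with $\Im z\neq0$ and let $\phi_n:=(M_{\mu_n}-z)^{-1}\1\in L^2(\mu_n)$, chosen in the representative $\phi_n(y)=z^{-1}\bigl(\int_0^\infty\phi_n(x)(x+y)^{-1}\dd\mu_n(x)-1\bigr)$, which is analytic for $\Re y>0$ and solves $(M_{\mu_n}-z)\phi_n(y)=1$ for \emph{every} $y>0$. From $\norm{\phi_n}_{L^2(\mu_n)}\le\mu_n(\bbR_+)^{1/2}/\abs{\Im z}$ and the uniform bound on the masses, $\norm{\phi_n}_{L^1(\mu_n)}$ is bounded, so $\{\phi_n\}$ is uniformly bounded and equi-Lipschitz on every interval $[y_0,\infty)$. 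By the Arzel\`a--Ascoli theorem and a diagonal argument, a subsequence of $\{\phi_n\}$ converges, uniformly on compact subsets of $(0,\infty)$, to a continuous function $\phi$; one then verifies $\phi\in L^2(\mu)$ and $(M_\mu-z)\phi=\1$, so $\phi=(M_\mu-z)^{-1}\1$, and concludes $F_{\mu_n}(z)=\int_0^\infty\phi_n\,\dd\mu_n\to\int_0^\infty\phi\,\dd\mu=F_\mu(z)$ along that subsequence. Both of these passages have the form $\int\psi_n\,\dd\mu_n\to\int\psi\,\dd\mu$ with $\psi_n\to\psi$ uniformly on compacts of $(0,\infty)$, and are done by splitting the integral over $(0,\delta)$, $[\delta,R]$, and $(R,\infty)$: the middle piece converges by the weak convergence $\mu_n\to\mu$ (with $\delta,R$ chosen so that $\mu(\{\delta\})=\mu(\{R\})=0$), and the tails are bounded via Cauchy--Schwarz, e.g. $\int_{(0,\delta)}\abs{\phi_n}\,\dd\mu_n\le\norm{\phi_n}_{L^2(\mu_n)}\,\mu_n((0,\delta))^{1/2}$. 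Since every subsequence of $\{F_{\mu_n}(z)\}$ has a sub-subsequence converging to $F_\mu(z)$, the whole sequence converges.

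The main obstacle is the tail control, especially near the origin: a priori $\phi_n(x)/(x+y)$ is not uniformly integrable with respect to $\mu_n$, and the $\mu_n$ live on moving supports. What makes the three-piece splitting work is the uniform tightness
\[
\lim_{\delta\to0}\ \sup_n\ \mu_n\bigl((0,\delta)\bigr)=0
\qquad\text{and}\qquad
\lim_{R\to\infty}\ \sup_n\ \mu_n\bigl((R,\infty)\bigr)=0,
\]
which is forced by the weak convergence of $\mu_n$ to a measure $\mu$ with $\mu(\{0\})=0$: this follows from the Portmanteau theorem, once one notes that each of the finitely many small-index $\mu_n$, having no atom at the origin, can also be made to put arbitrarily little mass on $(0,\delta)$. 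Proving this tightness and then threading it uniformly through the splitting above --- for the equation $(M_{\mu_n}-z)\phi_n=\1$ (used to identify $\phi$) as well as for $F_{\mu_n}(z)=\int\phi_n\,\dd\mu_n$ --- is where the genuine work sits; the remainder is routine. (One could instead try to prove strong resolvent convergence $\Gamma_{\mu_n}\to\Gamma_\mu$ by $\Gamma$-convergence of the forms $Q_{\mu_n}$ on the common core $C^\infty_\comp(\bbR_+)$, but then the convergence $v_{\mu_n}\to v_\mu$ of the distinguished cyclic vectors is awkward to obtain directly, which is why the passage through $L^2(\mu)$ seems preferable.)
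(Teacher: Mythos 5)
Your route is genuinely different from the paper's. The paper stays in the fixed space $L^2(\bbR_+)$: it proves strong resolvent convergence $\Gamma_{\mu_n}\to\Gamma_\mu$ using the common core $C^\infty_{\comp,0}(\bbR_+)$ and a dominated-convergence bound on the kernels (Lemma~\ref{lma.ct1}), and then proves norm convergence $v_{\mu_n}\to v_\mu$ of the distinguished vectors from \eqref{z12}, \eqref{z9} and a small projection lemma (Lemma~\ref{lma.ct2}) --- exactly the step you call awkward and choose to avoid. You instead pass to the model operators $G_{\mu_n}$ (your $M_{\mu_n}$) in the varying spaces $L^2(\mu_n)$, prove pointwise convergence of the Cauchy transforms $\jap{(G_{\mu_n}-z)^{-1}\1,\1}_{\mu_n}$ by Arzel\`a--Ascoli plus uniform tightness of $\{\mu_n\}$ at $0$ and at $\infty$, and finish with Helly selection and Stieltjes inversion; note that the identity $F_\mu(z)=\jap{(G_\mu-z)^{-1}\1_\mu,\1_\mu}_\mu$ which you re-derive by polar decomposition is exactly Lemma~\ref{l:cyc02}, so you may simply quote it. The tightness claims, the three-piece splitting, and the use of $\sigma_n(\bbR_+)=\mu_n(\bbR_+)\to\sigma(\bbR_+)$ to rule out escape of mass are all correct.

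There is, however, one genuine gap, and it sits exactly where the unboundedness of $G_\mu$ bites: the step ``$(M_\mu-z)\phi=\1$, so $\phi=(M_\mu-z)^{-1}\1$''. Two things are assumed there. First, that $\phi_n=(G_{\mu_n}-z)^{-1}\1$ admits the everywhere-defined representative you wrote; this is true but needs a short argument (write $G_{\mu_n}\phi_n=\calL(\calL_{\mu_n}\phi_n)$ and apply Fubini, using $\phi_n\in L^1(\mu_n)$ since $\mu_n$ is finite). Second, and more seriously, knowing that the locally uniform limit $\phi$ lies in $L^2(\mu)$ and satisfies the integral equation pointwise does \emph{not} by itself place $\phi$ in $\Dom G_\mu$: the paper establishes the integral-kernel action \eqref{c1} of $G_\mu$ only on $\calD_\mu$, and by Theorem~\ref{thm.b1} the set $\calD_\mu$ is a form core, not known to be an operator core, so you have no uniqueness statement for $L^2(\mu)$ solutions of the pointwise equation and cannot yet conclude $\phi=(G_\mu-z)^{-1}\1$. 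The fix goes through the quadratic form: from $(G_{\mu_n}-z)\phi_n=\1$ you get $\norm{L_{\mu_n}\phi_n}^2=\jap{\1,\phi_n}_{\mu_n}+z\norm{\phi_n}_{\mu_n}^2$, hence $\sup_n\norm{L_{\mu_n}\phi_n}<\infty$; your splitting argument also gives $\calL_{\mu_n}\phi_n(t)\to\calL_\mu\phi(t)$ for each $t>0$, and Fatou then yields $\calL_\mu\phi\in L^2$, i.e. $\phi\in\Dom G_\mu^{1/2}$. Now verify $\jap{L_\mu\phi,L_\mu g}-z\jap{\phi,g}_\mu=\jap{\1,g}_\mu$ for $g\in\calD_\mu$ (here Fubini is legitimate because $g$ is bounded with support away from the origin and $\phi\in L^1(\mu)$), extend to all $g\in\Dom G_\mu^{1/2}$ by the graph-norm density in Theorem~\ref{thm.b1}, and conclude by the form representation theorem that $\phi\in\Dom G_\mu$ and $(G_\mu-z)\phi=\1$. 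With this insertion your argument is complete, though it ends up using noticeably more machinery than the paper's two lemmas.
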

The proof is given in Section~\ref{sec.ct}. We note that the topology of weak convergence of finite measures on $\bbR$ is metrizable (see e.g. \cite{Bauer}, Exercise 4 on page 216), and so Theorem~\ref{thm.z9} ensures
the continuity of $\Omega$ in this topology. 

\section{Discussion and key ideas of the proof}\label{sec.aa}
\subsection{Notation}
Let $\mu\in\calM$.  
Along with $L^2=L^2(\bbR_+)$, we also consider the space $L^2(\mu):=L^2(\bbR,\dd\mu)$. We denote the inner product in  $L^2(\mu)$ by $\jap{f,g}_\mu$ and the norm by $\norm{f}_\mu$. 
Our inner products are linear in the first argument and anti-linear in the second one. We denote by $\1_\mu$ the function in $L^2(\mu)$ that is identically equal to $1$. 

\subsection{Reduction to $L^2(\mu)$}\label{sec.aa2}
Let us informally describe the main ideas of the proof. They originate in control theory, but here we present them in more traditional operator theoretic language. (For a discussion of the control theory aspects see Appendix.) We recall a general fact: if $L$ is a closed operator acting between two Hilbert spaces, then the operators
\begin{equation}
L^*L|_{(\Ker L)^\perp}\quad \text{ and }\quad LL^*|_{(\Ker L^*)^\perp}
\label{z6}
\end{equation}
are unitarily equivalent. The first idea is to represent the Hankel operator $\Gamma_\mu$ as a product of this kind and swap the factors.

Let $L:L^2\to L^2(\mu)$ be the operator of the Laplace transform:
\[
(Lf)(x)=\int_0^\infty e^{-tx}f(t)\dd t, \quad x>0,
\]
and let $L^*: L^2(\mu)\to L^2$ be its adjoint. For a general $\mu\in\calM$, the operators $L$ and $L^*$ do not have to be bounded. We postpone the (non-trivial) discussion of the domains and the precise definition of $L$ and $L^*$ to Section~\ref{sec.b}. For the moment, we brush the technical details aside and explain the formal structure of the proof. 
Since 
\[
h_\mu(t+s)=\int_0^\infty e^{-tx}e^{-sx}\dd\mu(x),
\]
the operator $\Gamma_\mu$ can be represented in a factorised form
\[
\Gamma_\mu=L^*L.
\]
Now consider the \emph{model operator}
\[
G_\mu:=LL^*\text{ in $L^2(\mu)$.}
\]
Formally, this is the integral operator
\[
(G_\mu f)(x)=\int_0^\infty\frac{f(y)}{x+y}\dd\mu(y).
\]
Below we will see that
\begin{itemize}
\item
For all $\mu\in\calM$, we have $\Ker L^*=\Ker G_\mu=\{0\}$ and therefore, by \eqref{z6}, the operators 
\[
\Gamma_\mu^\perp \quad\text{ and }\quad {G_\mu}
\]
are unitarily equivalent.
\item
If $\mu\in\calM_\finite$, then the spectrum of ${G_\mu}$ is simple and the element $\1_\mu$ ($=$ the function in $L^2(\mu)$ that is identically equal to $1$) is a cyclic element for ${G_\mu}$. The spectral measure of $G_\mu$ corresponding to $\1_\mu$ coincides with $\sigma$. 
\item
if $\mu\in\calM_\cofinite$, then the spectrum of ${G_\mu}$ is simple and the function 
\[
\theta_\mu(x)=1/x
\] 
is a cyclic element for ${G_\mu}$. The spectral measure of $G_\mu$ corresponding to $\theta_\mu$ coincides with $\rho$. 
\end{itemize}
Thus, the problem reduces to the spectral analysis of the operator ${G_\mu}$ in $L^2(\mu)$ with the cyclic element $\1_\mu$ if $\mu$ is finite and $\theta_\mu$ if $\mu$ is co-finite.

\subsection{Finite $\mu$: the Lyapunov equation}\label{sec.aa3}
Consider the case of $\mu\in\calM_\finite$. 
Along with ${G_\mu}$, let us consider the operator $X_\mu$ of multiplication by the independent variable in $L^2(\mu)$: 
\[
(X_\mu f)(x)=xf(x).
\]
Observe that
\begin{align}
(X_\mu{G_\mu} f)(x)+({G_\mu} X_\mu f)(x)
&=
\int_0^\infty\frac{xf(y)}{x+y}\dd\mu(y)
+
\int_0^\infty\frac{yf(y)}{x+y}\dd\mu(y)
\notag
\\
&=
\int_0^\infty \frac{x+y}{x+y}f(y)\dd\mu(y)
=
\int_0^\infty f(y)\dd\mu(y).
\label{z8}
\end{align}
Thus, we have the \emph{Lyapunov equation}
\begin{equation}
X_\mu{G_\mu}+{G_\mu} X_\mu=\jap{\bigcdot,\1_\mu}_\mu \1_\mu.
\label{c2}
\end{equation}
The important feature of the Lyapunov equation is that ${G_\mu}$ and $X_\mu$ enter this equation in a symmetric fashion. Let us summarise:
\begin{itemize}
\item
$\1_\mu$ is a cyclic element for $X_\mu$, with the corresponding spectral measure $\mu$;
\item
$\1_\mu$ is a cyclic element for ${G_\mu}$, with the corresponding spectral measure $\sigma$.
\end{itemize}
From here it is not hard to see that  $\mu\mapsto\sigma$ and $\sigma\mapsto\mu$ is one and the same map on measures. In other words, using this equation we will show that the map $\Omega$ is an involution. 

\subsection{Co-finite $\mu$: change of variable}
In terms of the operator $G_\mu$ in $L^2(\mu)$, the case of a co-finite $\mu$ reduces to the case of a finite $\mu$ by a simple change of variable $x\mapsto 1/x$ on $\bbR_+$. Indeed, let $\mu\in\calM_\cofinite$; consider the map 
\[
\Theta:L^2(\mu)\to L^2(\mu^{\#}), \quad (\Theta f)(x)=f(1/x)/x.
\]
A trivial calculation with the change of variable $x\mapsto 1/x$ shows that $\Theta$ is unitary and 
\[
\Theta^*G_{\mu^{\#}}\Theta=G_\mu, \quad \Theta \theta_\mu=\1_{\mu^{\#}}.
\]
This reduces the problem for the spectral measure of $G_\mu$, $\mu\in\calM_\cofinite$, to the problem for the spectral measure of $G_{\mu^{\#}}$, $\mu\in\calM_\finite$. Theorem~\ref{thm.z2a} follows from here easily.

\begin{remark*}
Formally multiplying the Lyapunov equation by $X_\mu^{-1}$ on both sides, we obtain the dual equation
\[
X_\mu^{-1}G_\mu+G_\mu X_\mu^{-1}=\jap{\bigcdot,\theta_\mu}\theta_\mu.
\]
We could treat the case of co-finite measures $\mu$ by a direct analysis of this equation, exactly mirroring the case of finite $\mu$. However, it is much easier to use the change of variable $x\mapsto 1/x$ as described above. 
\end{remark*}

\subsection{Analogy with Jacobi matrices and Schr\"odinger operators}\label{sec.jacobi}
Let us take a wider look at the type of problems discussed in this paper. Let $A$ be a self-adjoint operator in a Hilbert space with a cyclic element $v$, and let $\sigma$ be the corresponding spectral measure. For some \emph{very special} operators $A$ and \emph{very special} choices of $v$, the spectral map 
\[
\Omega: A\mapsto \sigma
\]
may happen to be injective. In yet more special cases, the image of $\Omega$ may be explicitly described, i.e. $\Omega$ can be realized as a bijection. Theorem~\ref{thm.z2} is a new example of this type, with an additional extra special property that $\Omega$ can be viewed as an involution. To provide context, we mention two other classical examples. 

Consider Jacobi matrices
\[
J=\begin{pmatrix}
b_0 & a_0 & 0 & 0 & 0 &\cdots\\
a_0 & b_1 & a_1 & 0 & 0 & \cdots\\
0 & a_1 & b_2 & a_2 & 0 &\cdots\\
0 & 0 & a_2 & b_3 & a_3 & \cdots\\
\vdots&\vdots&\vdots&\vdots&\vdots&\ddots
\end{pmatrix}, \quad b_n\in\bbR, \quad a_n\in\bbR_+
\quad \text{ in $\ell^2$.}
\]
For simplicity of discussion let us assume that $J$ is bounded.
It is an elementary fact that $v=(1,0,0,\dots)\in \ell^2$ is a cyclic element for $J$. 
Moreover, $J$ is uniquely determined by the spectral measure $\sigma$ corresponding to $v$. Further, the image of the spectral map is the set of all probability measures $\sigma$ with bounded infinite support. Thus, in this case the spectral map $J\mapsto \sigma$ can be viewed as a bijection between bounded Jacobi matrices and probability measures with bounded infinite support. See e.g. \cite{A} for a detailed exposition of these facts and the background. 

Another classical example is the self-adjoint Schr\"odinger operator 
\[
S_q=-\frac{d^2}{dx^2}+q(x) \quad \text{ in $L^2(\bbR_+)$}
\] 
with a real-valued potential $q$. For definiteness, let us assume that $q\in L^\infty(\bbR_+)$ and that $S_q$ is supplied with the Neumann boundary condition at the origin $x=0$. Then the spectral measure $\sigma$ of $S_q$ may be defined as in \eqref{eq:defrho} with $v=\delta_0$, where $\delta_0$ is the delta-function at the origin; see e.g. \cite[Section 3]{GS} for the details of this point of view. A more standard definition of the spectral measure $\sigma$ involves the Titchmarsh-Weyl $m$-function. 

Whatever is the definition of $\sigma$, the classical Borg-Marchenko uniqueness theorem asserts that the Schr\"odinger operator $S_q$ is uniquely defined by $\sigma$. Thus, the spectral map $S_q\mapsto \sigma$ is an injection. Of course, the image of the spectral map depends on the choice of the class of the potentials $q$; this description is a subtle problem. 

\subsection{The structure of the paper}
In Section~\ref{sec.b} we discuss the rigorous definition of the Laplace transform $L$ and its adjoint. In Section~\ref{sec.c}, we perform most of the analysis in $L^2(\mu)$. In Section~\ref{sec.a}, we return to $L^2$ and prove Theorems~\ref{thm.z2} and \ref{thm.z2a}.
In Section~\ref{sec.p} we prove the scaling properties of $\Omega$ and the trace formulas. 
In Section~\ref{sec.exa} we consider two examples of Hankel operators where the spectral measures can be computed explicitly.

\section{The Laplace transform}\label{sec.b}

We fix $\mu\in\calM$ and denote by $\calL$, $\calL_\mu$ the Laplace transform, 
\begin{align*}
\calL f(x)&=\int_0^\infty e^{-tx}f(t)\dd t, \quad\quad x>0,\quad\quad f\in L^2,
\\
\calL_\mu f(t)&=\int_0^\infty e^{-tx}f(x)\dd\mu(x), \quad t>0,\quad f\in L^2(\mu).
\end{align*}
Here we view $\calL f$ and $\calL_\mu f$ as functions (pointwise) rather than elements of function spaces. For a general $f\in L^2$, the Laplace transform $\calL f$ is not necessarily in $L^2(\mu)$, and likewise for a general $f\in L^2(\mu)$ the Laplace transform $\calL_\mu f$ is not necessarily in $L^2$. Let $L$ and $L_\mu$ be the linear operators defined as the restrictions of $\calL$, $\calL_\mu$ onto their \emph{maximal domains}:
\begin{align*}
L f&=\calL f, & f\in \Dom L:&=\{f\in L^2: \calL f\in L^2(\mu)\},
\\
L_\mu f&=\calL_\mu f,  & f\in \Dom L_\mu:&=\{f\in L^2(\mu): \calL_\mu f\in L^2\}.
\end{align*}
Let $\calD_\mu\subset L^2(\mu)$ be the set of bounded compactly supported functions on $\bbR_+$; this means, in particular, that the supports are at a positive distance from the origin.

\begin{theorem}\label{thm.b1}\cite{Ya1,PuTreil}
Let $\mu\in\calM$. The operators  $L$ and $L_\mu$ are closed. We have 
\[
C^\infty_{\comp}(\bbR_+)\subset\Dom L, \quad \calD_\mu\subset\Dom L_\mu.
\]
Moreover, $C^\infty_{\comp}(\bbR_+)$ is dense in $\Dom L$ with respect to the graph norm of $L$ and $\calD_\mu$ is dense in $\Dom L_\mu$ with respect to the graph norm of $L_\mu$.  We have 
\[
L_\mu^*=L \quad\text{ and }\quad L^*=L_\mu.
\]
The kernel of $L_\mu$ is trivial. The kernel of $L$ is either trivial or infinite-dimensional. It is infinite-dimensional if and only if $\mu$ is a purely atomic measure supported on a sequence of points $\{a_k\}_{k\in\bbZ_+}\subset \bbR_+$ satisfying the Blaschke condition \eqref{z0a}. 
\end{theorem}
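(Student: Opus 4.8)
The plan is to treat $L$ and $L_\mu$ symmetrically, to recognise the closedness of $L$ and the ``core'' property of $C^\infty_\comp(\bbR_+)$ as a restatement of Proposition~\ref{thm.z1}, and to obtain everything else from abstract adjoint theory once two explicit Fubini computations are in place. I would begin by recording the elementary consequences of $\mu\in\calM$: since $h_\mu(t)=\int_0^\infty\ee^{-\lambda t}\dd\mu(\lambda)<\infty$ for every $t>0$, bounding $\ee^{-\lambda t}$ below on a fixed interval $[a,b]\subset\bbR_+$ shows $\mu([a,b])<\infty$, so $\mu$ is $\sigma$-finite and $C^\infty_\comp(\bbR_+)$, $\calD_\mu$ are dense in $L^2$, $L^2(\mu)$. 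For the inclusions, if $f\in L^2$ is supported in $[a,b]\subset\bbR_+$ then $\abs{\calL f(x)}\le\ee^{-ax}\norm{f}_{L^1}$, whence $\calL f\in L^2(\mu)$ because $\int_0^\infty\ee^{-2ax}\dd\mu(x)=h_\mu(2a)<\infty$; thus $C^\infty_\comp(\bbR_+)\subset\Dom L$. Symmetrically, if $g\in L^2(\mu)$ is bounded and supported in $[a,b]\subset\bbR_+$, then $\abs{\calL_\mu g(t)}\le\norm{g}_\infty\mu([a,b])\ee^{-at}$, which is in $L^2(\bbR_+)$, so $\calD_\mu\subset\Dom L_\mu$.

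Next I would observe that the form $Q_\mu$ of Proposition~\ref{thm.z1} is precisely $Q_\mu[f]=\norm{\calL f}_{L^2(\mu)}^2$ with $\Dom Q_\mu=\{f\in L^2:\calL f\in L^2(\mu)\}=\Dom L$, so its form norm $(\norm{f}^2+Q_\mu[f])^{1/2}$ coincides with the graph norm of $L$; hence the two assertions of Proposition~\ref{thm.z1} say exactly that $L$ is closed and that $C^\infty_\comp(\bbR_+)$ is a core for $L$. Writing $L_0$, $L_{\mu,0}$ for the restrictions of $L$, $L_\mu$ to $C^\infty_\comp(\bbR_+)$, $\calD_\mu$, I would then prove by a Fubini argument that $\int_0^\infty\calL f(x)\,\overline{g(x)}\,\dd\mu(x)=\int_0^\infty f(t)\,\overline{\calL_\mu g(t)}\,\dd t$ whenever $f\in C^\infty_\comp(\bbR_+)$ and $g\in L^2(\mu)$, and also whenever $g\in\calD_\mu$ and $f\in L^2$; this is legitimate because in each case the double integral $\int_0^\infty\int_0^\infty\ee^{-tx}\abs{f(t)g(x)}\,\dd t\,\dd\mu(x)$ is finite, one factor having support away from the origin and the other being controlled there by $(2x)^{-1/2}\norm{f}_{L^2}$ or $h_\mu(2t)^{1/2}\norm{g}_\mu$. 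Since $\calL_\mu g$ and $\calL f$ are continuous, hence locally integrable, on $\bbR_+$, this identity shows $g\in\Dom(L_0)^*\Leftrightarrow\calL_\mu g\in L^2\Leftrightarrow g\in\Dom L_\mu$ with $(L_0)^*g=L_\mu g$, and symmetrically $(L_{\mu,0})^*=L$. From here the remaining claims are pure operator theory: $L_\mu=(L_0)^*$ is closed; since $C^\infty_\comp(\bbR_+)$ is a core for $L$ we have $\overline{L_0}=L$, so $L^*=(\overline{L_0})^*=(L_0)^*=L_\mu$ and $L_\mu^*=L^{**}=L$; and $\overline{L_{\mu,0}}=(L_{\mu,0})^{**}=L^*=L_\mu$, i.e. $\calD_\mu$ is a core for $L_\mu$.

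For the kernels I would argue via analyticity. If $g\in\Ker L_\mu$, then $F(z):=\int_0^\infty\ee^{-zx}g(x)\,\dd\mu(x)$ is analytic on $\{\Re z>0\}$ (using $\int_0^\infty\ee^{-\Re z\cdot x}\abs{g}\,\dd\mu\le h_\mu(2\Re z)^{1/2}\norm{g}_\mu<\infty$) and vanishes on $\bbR_+$, hence on each vertical line $\Re z=c$, where it is the Fourier transform of the \emph{finite} measure $\ee^{-cx}g(x)\,\dd\mu(x)$; that measure, hence $g$, must be zero, so $\Ker L_\mu=\{0\}$. For $\Ker L$ I would invoke the Paley--Wiener theorem: $\calL$ is, up to a constant, a unitary map of $L^2(\bbR_+)$ onto the Hardy space $H^2(\bbC_+)$ of the right half-plane $\bbC_+=\{\Re z>0\}$, and under it $\Ker L$ corresponds to $\{F\in H^2(\bbC_+):F=0\ \mu\text{-a.e. on }\bbR_+\}$. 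A nonzero element of $H^2(\bbC_+)$ has only isolated zeros, so a nontrivial kernel forces $\mu$ to be purely atomic with atoms $\{a_k\}$ contained in the zero set of some nonzero $H^2(\bbC_+)$-function; such sequences are exactly those satisfying the half-plane Blaschke condition $\sum_k a_k/(1+a_k^2)<\infty$, which is \eqref{z0a}, and then $\{F\in H^2(\bbC_+):F(a_k)=0\ \forall k\}=B\cdot H^2(\bbC_+)$ with $B$ the associated Blaschke product, an infinite-dimensional subspace; conversely if the Blaschke condition fails only $F\equiv0$ qualifies. Hence $\Ker L$ is either $\{0\}$ or infinite-dimensional, with the stated characterisation.

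The main obstacle is not in the argument above but in Proposition~\ref{thm.z1} itself, whose unconditional proof — valid for \emph{every} $\mu\in\calM$, without the auxiliary moment hypothesis \eqref{b0a} — is the technical heart of \cite{PuTreil}; granting it, the only delicate points are the identifications $(L_0)^*=L_\mu$ and $(L_{\mu,0})^*=L$, where one must rule out the abstract adjoint being strictly larger than the explicit maximal operator (this is where the continuity of the Laplace transform on $\bbR_+$ enters), and the appeal to Hardy-space and Blaschke-product theory in the kernel dichotomy.
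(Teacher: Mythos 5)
The paper contains no proof of Theorem~\ref{thm.b1} to compare against: it is imported from \cite{Ya1} (under the extra hypothesis \eqref{b0a}) and \cite{PuTreil} (in general), and the paper's internal logic runs in the opposite direction to yours, deriving Propositions~\ref{thm.z1} and \ref{prp.z1d} \emph{from} Theorem~\ref{thm.b1}. Your proposal is therefore not an independent proof but a (correct) reduction: you take Proposition~\ref{thm.z1} as given and recover the rest of the theorem by elementary means. Within the paper's own architecture this is circular, and it is legitimate only because Proposition~\ref{thm.z1} is itself established in \cite{Ya1,PuTreil}; as you acknowledge, its unconditional version for all $\mu\in\calM$ is exactly the hard analytic content, and your argument does not touch it. Granting that, the reduction is sound and even instructive. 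The two Fubini computations do identify $(L|_{C^\infty_\comp(\bbR_+)})^*$ with the maximal operator $L_\mu$ and $(L_\mu|_{\calD_\mu})^*$ with the maximal $L$ (the continuity of the pointwise Laplace transforms is precisely what rules out a strictly larger adjoint), and from $\overline{L|_{C^\infty_\comp(\bbR_+)}}=L$ you then get, purely by taking second adjoints, that $L_\mu$ is closed, $L^*=L_\mu$, $L_\mu^*=L$, and — a nice byproduct — that $\calD_\mu$ is a core for $L_\mu$; the paper's discussion lists both core properties among the difficult points, whereas your duality shows the second is a formal consequence of the first. Your kernel analysis (analytic continuation plus Fourier uniqueness for $\Ker L_\mu=\{0\}$, and Paley--Wiener together with Blaschke-product theory for the dichotomy and the characterisation \eqref{z0a} of $\Ker L$) is complete and correct, and in effect gives a self-contained proof of the statement that the paper cites as Proposition~\ref{prp.z1d}. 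In short: correct modulo the quoted Proposition~\ref{thm.z1}, with the caveat that the genuinely hard part of the theorem is outsourced to the same references the paper itself cites.
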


\textbf{Discussion:}
\begin{itemize}
\item
The most difficult part of the theorem is the density of $C^\infty_{\comp}(\bbR_+)$ (resp. $\calD_\mu$) in $\Dom L$ (resp. $\Dom L_\mu$).  
\item
In Section 3 of \cite{Ya1}, this theorem (except for the last part) was proved under the additional assumption \eqref{b0a}. This assumption was lifted in \cite{PuTreil}. 
\item
Returning to the quadratic form $Q_\mu$ of $\Gamma_\mu$, we see that 
\[
Q_\mu[f]=\norm{Lf}^2,
\]
$\Dom Q_\mu=\Dom L$ and $\Ker \Gamma_\mu=\Ker L$. Thus, Propositions~\ref{thm.z1} and \ref{prp.z1d} are consequences of Theorem~\ref{thm.b1}. 
\end{itemize}

\section{Analysis in the space $L^2(\mu)$}\label{sec.c}
\subsection{The operator $G_\mu$}
Let $\mu\in\calM$. 
Here we consider the operator ${G_\mu}$ in $L^2(\mu)$, defined by
\[
{G_\mu}=L_\mu^*{L_\mu}
\]
with the domain
\[
\Dom {G_\mu}=\{f\in \Dom {L_\mu}: {L_\mu} f\in \Dom L_\mu^*\}.
\]
With this definition the quadratic form of $G_\mu$ can be written as
\begin{equation}
\norm{G_\mu^{1/2}f}_\mu^2=\int_0^\infty\abs{\calL_{\mu}f(t)}^2\dd t, \quad f\in \Dom G_\mu^{1/2}=\Dom L_\mu.
\label{c11}
\end{equation}
We would like to be able to write 
\begin{equation}
({G_\mu} f)(x)=\int_0^\infty \frac{f(y)}{x+y}\dd\mu(y), \quad x>0
\label{c1}
\end{equation}
at least for sufficiently ``nice'' functions $f$. 

\begin{lemma}\label{lma.c1}
Suppose $\mu\in\calM$ is either finite or co-finite. Then $\calD_\mu\subset\Dom {G_\mu}$ and the operator $G_\mu$ acts according to \eqref{c1} on elements $f\in\calD_\mu$. 
\end{lemma}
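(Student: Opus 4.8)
The plan is to verify the claim directly from the definition $G_\mu = L_\mu^* L_\mu$, by checking that for $f \in \calD_\mu$ we have $f \in \Dom L_\mu$, that $L_\mu f \in \Dom L_\mu^* = \Dom L$ (using Theorem~\ref{thm.b1}), and finally that $L^*(L_\mu f)$ is given pointwise by the integral \eqref{c1}. The first point is already contained in Theorem~\ref{thm.b1}, which asserts $\calD_\mu \subset \Dom L_\mu$. So the real content is the second and third points.

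First I would compute $L_\mu f$ explicitly for $f \in \calD_\mu$: since $f$ is bounded and supported in some $[\delta, R] \subset \bbR_+$, the function $(L_\mu f)(t) = \int_0^\infty e^{-tx} f(x)\,\dd\mu(x)$ is well-defined for all $t \geq 0$, is $C^\infty$ on $[0,\infty)$, and (crucially) decays like $e^{-\delta t}$ as $t \to \infty$ since $\supp f \subset [\delta, \infty)$. Hence $L_\mu f \in L^2$ together with all its derivatives; in fact $L_\mu f$ is Schwartz-like at infinity. Next I must confirm $L_\mu f \in \Dom L = \Dom L_\mu^*$, i.e. that $\calL(L_\mu f) \in L^2(\mu)$. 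By Fubini (justified by the exponential decay and finiteness/co-finiteness of $\mu$, see below),
\[
\calL(L_\mu f)(y) = \int_0^\infty e^{-ty}\!\int_0^\infty e^{-tx} f(x)\,\dd\mu(x)\,\dd t = \int_0^\infty \frac{f(x)}{x+y}\,\dd\mu(x),
\]
which is the candidate expression \eqref{c1}. To see this lies in $L^2(\mu)$: for $\mu$ finite, the integrand is bounded by $\|f\|_\infty/\delta$ uniformly in $y$, so $\calL(L_\mu f)$ is a bounded function, hence in $L^2(\mu)$; for $\mu$ co-finite, one needs the decay $|\calL(L_\mu f)(y)| \leq \|f\|_\infty \mu([\delta,R])/(y+\delta)$, which combined with boundedness gives a bound $\lesssim \min(1, 1/y)$, and $\int_0^\infty \min(1,1/y)^2\,\dd\mu(y) \leq \mu(\bbR_+) \cdot$(stuff on $y\le1$) $+ \int_1^\infty y^{-2}\dd\mu < \infty$ — wait, $\mu$ co-finite does not control $\mu$ near $0$; but near $y = 0$ the bound $\min(1,1/y)$ is just $\le 1$, and $\int_0^1 \dd\mu(y)$ could be infinite. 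Here I would instead use the sharper bound $|\calL(L_\mu f)(y)| \leq \int_\delta^R \frac{\|f\|_\infty}{x+y}\dd\mu(x) \le \|f\|_\infty \mu([\delta,R]) \cdot \frac{1}{\delta}$, i.e. it is simply bounded, AND $\le \|f\|_\infty \int_\delta^R x^{-1}\dd\mu(x)$ — hmm, that's still just a constant. The genuine point for co-finite $\mu$ is that $\int_0^\infty y^{-2}\dd\mu(y) < \infty$ forces $\mu$ to be finite away from $0$, and near $0$ we can bound $\frac{1}{x+y} \le \frac{1}{x} \le \frac{1}{\delta}$; so $\calL(L_\mu f)$ is bounded, and then $\int |\calL(L_\mu f)|^2 \dd\mu$ is finite iff $\mu$ is finite, which it need not be. So I would instead split: write $\frac{1}{x+y} \le \frac{C_f}{1+y}$ where $C_f$ depends on $\delta, R, \|f\|_\infty$ — valid since $x \ge \delta$ gives $\frac{1}{x+y} \le \frac{1}{\delta + y} \le \frac{\max(1,1/\delta)}{1+y}$ — and then $\int_0^\infty (1+y)^{-2}\dd\mu(y) < \infty$ for co-finite $\mu$, since $(1+y)^{-2} \le \min(1, y^{-2}) \le C y^{-2}$ is false near $0$... actually $(1+y)^{-2} \le 1$ always and $(1+y)^{-2} \le y^{-2}$ always, but $\int_0^\infty (1+y)^{-2}\dd\mu$ needs $\mu$ finite near $0$. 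This is the crux and I return to it below.

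Let me restructure. For $\mu$ co-finite I would use that $\frac{1}{x+y} \le \frac{1}{y}$ to get $|\calL(L_\mu f)(y)|^2 \le \|f\|_\infty^2 \mu([\delta,R])^2 / y^2$, and separately the global bound $|\calL(L_\mu f)(y)| \le \|f\|_\infty \mu([\delta,R])/\delta$; these do not immediately combine to an $L^2(\mu)$ bound, but in fact the first one alone gives $\int_1^\infty |\calL(L_\mu f)|^2 \dd\mu \le C\int_1^\infty y^{-2}\dd\mu < \infty$, while on $(0,1)$ I use $\frac{1}{x+y} \le \frac{1}{x} \le \frac{1}{\delta}$ together with the refinement that $f$ compactly supported means $\calL_\mu f = L_\mu f$ itself lies in a nicer space — actually, the cleanest route: observe $L_\mu f \in \Dom L_\mu^*$ is equivalent (Theorem~\ref{thm.b1}) to $L_\mu f$ being in the graph-norm closure properties, and since $L_\mu f$ is smooth with exponential decay, it is a "nice" function; I would verify $L_\mu f \in \Dom L$ by checking $\calL(L_\mu f) \in L^2(\mu)$ via the estimate $|\calL(L_\mu f)(y)| \le \|f\|_\infty \int_\delta^\infty \frac{\dd\mu(x)}{x+y}$, and then use $\int_0^\infty \left(\int_\delta^\infty \frac{\dd\mu(x)}{x+y}\right)^2 \dd\mu(y) \le \mu([\delta,\infty)) \int_\delta^\infty\int_0^\infty \frac{\dd\mu(y)\dd\mu(x)}{(x+y)^2}$ by Cauchy–Schwarz in $x$, and bound $\frac{1}{(x+y)^2} \le \frac{1}{x^2}\cdot$ ... no. The honest statement: for co-finite $\mu$ with $x \ge \delta$, $\frac{1}{(x+y)^2} \le \frac{1}{\delta^2}\cdot\frac{1}{1} $ and also $\le \frac{1}{y^2}$; integrating $\dd\mu(y)$ the second bound handles $y$ large and for $y$ small we need $\mu(\{y < 1\}) $ — but co-finiteness gives exactly $\int y^{-2}\dd\mu < \infty$ which controls large $y$, not small. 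However $\frac{1}{(x+y)^2} \le \frac{1}{\delta y}$ when $x \ge \delta$ (since $(x+y)^2 \ge xy \ge \delta y$) — no good near $y=0$ either. I think the resolution is that $\frac{1}{(x+y)^2}\le \frac{1}{xy}\le \frac{1}{\delta y}$ is wrong for small $y$, but $\frac{1}{(x+y)^2} \le \frac{1}{x^2}$ when... $x+y \ge x$, yes! So $\frac{1}{(x+y)^2} \le \frac{1}{x^2} \le \frac{1}{\delta^2}$, giving $|\calL(L_\mu f)(y)| \le \|f\|_\infty \mu([\delta,R])/\delta$ bounded, not $L^2(\mu)$. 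Conversely $\frac{1}{(x+y)^2}\le \frac{1}{y^2}$. Combining: $\frac{1}{(x+y)^2} \le \min\big(\frac{1}{\delta^2}, \frac{1}{y^2}\big) \le \frac{C_\delta}{1+y^2}$, and $\int_0^\infty \frac{\dd\mu(y)}{1+y^2} \le \mu(\{y\le 1\}) + \int_{y>1} y^{-2}\dd\mu$. Co-finiteness bounds the second term but not $\mu(\{y \le 1\})$ — UNLESS co-finite measures are automatically finite near $0$... they are not in general (e.g. $\dd\mu = x^{-1}\dd x$ on $(0,1)$ is co-finite since $\int_0^1 x^{-1}\cdot x^{-2}\dd x = \infty$, so no, that's not co-finite). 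Actually $\int_0^\infty x^{-2}\dd\mu < \infty$ DOES force $\mu$ finite on $(0,1)$: $\mu((0,1)) = \int_0^1 \dd\mu \le \int_0^1 x^{-2}\dd\mu < \infty$ since $x^{-2} \ge 1$ on $(0,1)$. Good — so co-finite $\Rightarrow$ $\mu$ finite near $0$, and the whole estimate goes through: $\calL(L_\mu f) \in L^2(\mu)$, hence $L_\mu f \in \Dom L_\mu^* = \Dom L$, hence $f \in \Dom G_\mu$, and $G_\mu f = L^*(L_\mu f) = \calL(L_\mu f)$ is exactly \eqref{c1}. The main obstacle is precisely this estimate — getting the integrability of the candidate $G_\mu f$ against $\dd\mu$ in the co-finite case, which hinges on the observation just made that co-finiteness forces local finiteness of $\mu$ near the origin, and the Fubini justification for the interchange producing \eqref{c1} from the two-step composition.
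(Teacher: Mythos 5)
Your proposal is correct and follows essentially the same route as the paper: bound $L_\mu f$ by $Ce^{-\delta t}$, deduce $\abs{\calL(L_\mu f)(y)}\leq C/(\delta+y)$, check this is in $L^2(\mu)$ in both the finite and co-finite cases, and then apply Fubini to get \eqref{c1}. The only difference is cosmetic: the paper packages your small-$y$/large-$y$ splitting (and your observation that co-finiteness forces $\mu$ to be finite near the origin) into the single condition $\int_0^\infty(1+x^2)^{-1}\,\dd\mu(x)<\infty$, which holds for both classes; also note the harmless slip that the operator applied to $L_\mu f$ should be written $L_\mu^*=L$, not $L^*$.
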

\begin{proof}
Let $f\in \calD_\mu$, with $\supp f\subset[a,\infty)$ for some $a>0$. Then for $g=\mathcal {L_\mu} f$ we have 
\[
\abs{g(t)}\leq \int_a^\infty e^{-xt}\abs{f(x)}\dd\mu(x)
\leq C e^{-at},
\]
and therefore 
\[
\abs{\mathcal L g(x)}\leq C\int_0^\infty e^{-at}e^{-xt}\dd t=
C/(a+x).
\]
We note that in both cases $\mu\in\calM_\finite$ and $\mu\in\calM_\cofinite$, the measure $\mu$ satisfies the condition
\[
\int_0^\infty \frac{\dd\mu(x)}{1+x^2}<\infty,
\]
and therefore $\mathcal Lg\in L^2(\mu)$.  Thus, $g={L_\mu} f\in \Dom L_\mu^*$, or equivalently $f\in \Dom {G_\mu}$. 

Now for $f\in \calD_\mu$, using Fubini,  we find
\[
({G_\mu} f)(x)=(L_\mu^*{L_\mu} f)(x)=\int_0^\infty e^{-tx}\int_0^\infty e^{-ty}f(y)\dd\mu(y) \dd t
=
\int_0^\infty \frac{f(y)}{x+y}\dd\mu(y)
\]
as required. 
\end{proof}
In Sections~\ref{sec.c2}--\ref{sec.c6} below we consider finite $\mu$. In Section~\ref{sec.c7} we return to co-finite $\mu$. 

\subsection{The Lyapunov equation}\label{sec.c2}
Assume that $\mu\in\calM_\finite$. We consider the function $\1_\mu\in L^2(\mu)$ which is identically equal to $1$. Along with $G_\mu$, we consider the operator $X_\mu$ of multiplication by the independent variable in $L^2(\mu)$. More precisely, 
\[
(X_\mu f)(x)=xf(x), \quad 
\Dom X_\mu=\left\{f\in L^2(\mu): \int_0^\infty x^2\abs{f(x)}^2\dd\mu(x)<\infty\right\}.
\]
It is clear that with this definition, $X_\mu$ is self-adjoint. As explained in Section~\ref{sec.aa3}, the importance of $X_\mu$ for us is that it enters the Lyapunov equation \eqref{c2}. The precise interpretation of this equation is a non-trivial question, because both $X_\mu$ and $G_\mu$ are in general unbounded. We use the approach that is standard in control theory: converting the Lyapunov equation into the integral form. Observe that \emph{formally}, 
\[
\frac{d}{dt}e^{-tX_\mu}{G_\mu} e^{-tX_\mu}=-e^{-tX_\mu}({G_\mu} X_\mu+X_\mu{G_\mu}) e^{-tX_\mu}=-\jap{\bigcdot,e^{-tX_\mu}\1_\mu}e^{-tX_\mu}\1_\mu.
\]
Integrating over $t$ from $0$ to infinity, again \emph{formally} we obtain
\[
{G_\mu}=\int_0^\infty \jap{\bigcdot,e^{-tX_\mu}\1_\mu}_\mu e^{-tX_\mu}\1_\mu\dd t,
\]
and similarly 
\[
X_\mu=\int_0^\infty \jap{\bigcdot,e^{-t{G_\mu}}\1_\mu}_\mu e^{-t{G_\mu}}\1_\mu\dd t.
\]
We will justify and use these equations in the weak form, i.e. we will prove
\begin{lemma}\label{lma.c4}
Let $\mu\in\calM_\finite$. Then we have the identities 
\begin{align}
\norm{G_\mu^{1/2}f}_\mu^2&=\int_0^\infty \abs{\jap{f,e^{-tX_\mu}\1_\mu}_\mu}^2\dd t,
\quad \forall f\in\Dom G_\mu^{1/2},
\label{c3a}
\\
\norm{X_\mu^{1/2}f}_\mu^2&=\int_0^\infty \abs{\jap{f,e^{-t{G_\mu}}\1_\mu}_\mu}^2\dd t, 
\quad \forall f\in\Dom G_\mu. 
\label{c4a}
\end{align}
\end{lemma}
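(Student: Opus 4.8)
\textbf{Proof proposal for Lemma~\ref{lma.c4}.}

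The plan is to establish the two identities \eqref{c3a} and \eqref{c4a} by recognising both right-hand sides as norms of Laplace-type transforms and matching them against the factorised forms we already have. For \eqref{c3a}, the starting point is \eqref{c11}, which says $\norm{G_\mu^{1/2}f}_\mu^2 = \int_0^\infty \abs{\calL_\mu f(t)}^2\dd t$ for $f\in\Dom L_\mu = \Dom G_\mu^{1/2}$. The key observation is that $\calL_\mu f(t) = \int_0^\infty e^{-tx}f(x)\dd\mu(x) = \jap{f, e^{-tx}}_\mu$, and since $e^{-tX_\mu}$ is multiplication by $e^{-tx}$ in $L^2(\mu)$, we have $e^{-tX_\mu}\1_\mu$ equal to the function $x\mapsto e^{-tx}$. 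Therefore $\jap{f, e^{-tX_\mu}\1_\mu}_\mu = \calL_\mu f(t)$ pointwise in $t$, and \eqref{c3a} is just \eqref{c11} rewritten. The only thing to check carefully is that $e^{-tX_\mu}\1_\mu\in L^2(\mu)$ for each $t>0$, which holds because $\mu\in\calM$ (indeed $\int_0^\infty e^{-2tx}\dd\mu(x) = h_\mu(2t) < \infty$), so the inner product is well-defined; and that the identification $\jap{f,\cdot}_\mu = \calL_\mu f(t)$ is legitimate even when $f\notin\Dom L_\mu$, but in that case both sides of \eqref{c3a} are $+\infty$ (the left by the convention $\Dom G_\mu^{1/2}=\Dom L_\mu$, the right because $\calL_\mu f$ fails to be in $L^2$), so the restriction $f\in\Dom G_\mu^{1/2}$ in the statement handles this.

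For \eqref{c4a}, the strategy is to exploit the symmetry of the Lyapunov equation: the roles of $X_\mu$ and $G_\mu$ are interchangeable, with $\1_\mu$ the cyclic-type element in both. Concretely, I would use the spectral theorem for the self-adjoint operator $G_\mu$ (whose kernel is trivial by Theorem~\ref{thm.b1}, since $\Ker G_\mu = \Ker L_\mu = \{0\}$) together with the resolvent/semigroup identity. Formally, the claim is that $X_\mu = \int_0^\infty \jap{\cdot, e^{-tG_\mu}\1_\mu}_\mu\, e^{-tG_\mu}\1_\mu\, \dd t$; testing this against $f\in\Dom X_\mu$ and pairing with $f$ gives \eqref{c4a}. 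To prove this rigorously I would integrate the formal identity $\frac{d}{dt}e^{-tG_\mu}X_\mu e^{-tG_\mu} = -\jap{\cdot, e^{-tG_\mu}\1_\mu}_\mu e^{-tG_\mu}\1_\mu$ over $t\in(0,\infty)$. The term $e^{-tG_\mu}X_\mu e^{-tG_\mu}f \to 0$ weakly as $t\to\infty$ because $G_\mu$ has trivial kernel (so the spectral projection onto $\{0\}$ vanishes and $e^{-tG_\mu}\to 0$ strongly on the orthogonal complement of the kernel, i.e. on all of $L^2(\mu)$), while at $t=0$ we recover $X_\mu$. The boundary behaviour at $t=0$ requires knowing $e^{-tG_\mu}f\to f$ in $\Dom X_\mu$, or at least that $\jap{X_\mu e^{-tG_\mu}f, e^{-tG_\mu}f}_\mu \to \jap{X_\mu f, f}_\mu$; this follows from strong continuity of $e^{-tG_\mu}$ and a uniform bound, using $e^{-tG_\mu}$ commutes with nothing in particular but $\norm{X_\mu^{1/2}e^{-tG_\mu}f}_\mu$ can be controlled. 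The cleanest route is probably to verify the weak form directly: for $f,g$ in suitable dense sets, show $\jap{X_\mu f, g}_\mu = \int_0^\infty \jap{f, e^{-tG_\mu}\1_\mu}_\mu\overline{\jap{g, e^{-tG_\mu}\1_\mu}_\mu}\,\dd t$ by expanding both sides via the spectral measure of $G_\mu$ and the Lyapunov relation \eqref{c2}, then specialise to $g=f$ and extend by monotone convergence / closedness of $X_\mu^{1/2}$.

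An alternative and perhaps more transparent approach to \eqref{c4a} is to reduce it to \eqref{c3a} by the symmetry of the construction. The operator $G_\mu = L_\mu^* L_\mu$ in $L^2(\mu)$ has the same structure as $\Gamma_\mu = L^* L$ in $L^2$, and one checks that under the spectral representation of $G_\mu$ (which is unitarily equivalent to $\Gamma_\mu^\perp$), the element $\1_\mu$ plays the role that $\delta_0$ plays for $\Gamma_\mu$, while $X_\mu$ becomes the ``model operator'' attached to the measure $\sigma = \Omega(\mu)$. But this circularity (it presupposes facts about $\sigma$ that come later) makes the direct semigroup argument preferable for a self-contained proof at this stage.

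\textbf{Main obstacle.} The genuinely delicate point is the justification of \eqref{c4a}: unlike \eqref{c3a}, which is essentially a restatement of \eqref{c11}, identity \eqref{c4a} encodes the content of the Lyapunov equation and requires handling the unbounded operators $X_\mu$ and $G_\mu$ together. The integration-by-parts argument has two fragile endpoints — showing $e^{-tG_\mu}X_\mu e^{-tG_\mu}f\to 0$ as $t\to\infty$ (needs triviality of $\Ker G_\mu$, which we have) and controlling the $t\to 0^+$ limit in the topology of $\Dom X_\mu^{1/2}$ — and making the formal differentiation rigorous requires knowing $e^{-tG_\mu}f\in\Dom X_\mu$ for $f$ in a suitable core and that $t\mapsto e^{-tG_\mu}\1_\mu$ is sufficiently regular. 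I expect the bulk of the work to be in setting up an appropriate dense set of $f$ (e.g. $f\in\calD_\mu\cap\Dom X_\mu$, or finite linear combinations of spectral-subspace vectors for $G_\mu$) on which all the manipulations are licit, and then passing to the general $f\in\Dom X_\mu = \Dom X_\mu^{1/2}\cap(\text{domain condition})$ by a density and closedness argument; the requirement $f\in\Dom G_\mu$ in the statement of \eqref{c4a} is presumably exactly what is needed to make the $t\to 0$ boundary term behave.
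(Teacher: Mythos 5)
Your treatment of \eqref{c3a} coincides with the paper's: $\jap{f,e^{-tX_\mu}\1_\mu}_\mu=(\calL_\mu f)(t)$, so \eqref{c3a} is just \eqref{c11} rewritten. For \eqref{c4a} your strategy --- differentiate the sandwiched quantity $\jap{X_\mu e^{-tG_\mu}f,e^{-tG_\mu}f}_\mu$, insert the Lyapunov equation, integrate in $t$, and kill the boundary term at infinity using $\Ker G_\mu=\{0\}$ --- is also exactly the paper's. However, at precisely the two places you flag as ``fragile endpoints'' your argument has a genuine gap, and the missing ingredient is the same in both: two a priori operator bounds, proved in the paper beforehand (Lemmas~\ref{lma.c2} and \ref{lma.c3a}) and constituting the point where finiteness of $\mu$ really enters. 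From the explicit kernel $1/(x+y)$, the estimate $y/(x+y)\le 1$ and Cauchy--Schwarz with $\mu(\bbR_+)<\infty$ one gets that $G_\mu$ maps $\Dom G_\mu$ into $\Dom X_\mu$ with $\norm{X_\mu G_\mu f}_\mu\le\mu(\bbR_+)\norm{f}_\mu$; by the Heinz inequality this yields $\norm{X_\mu^{1/2}G_\mu^{1/2}f}_\mu\le\mu(\bbR_+)^{1/2}\norm{f}_\mu$ on $\Dom G_\mu^{1/2}$. Without the first bound the differentiation of $F(t)=\norm{X_\mu^{1/2}e^{-tG_\mu}f}_\mu^2$ and the use of the (rigorous form of the) Lyapunov equation \eqref{c2} --- itself a corollary of that bound --- are not licit, since one needs $G_\mu e^{-tG_\mu}f\in\Dom X_\mu$. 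Without the second bound, $F(t)$ is not even known to be finite, and your claim that $e^{-tG_\mu}X_\mu e^{-tG_\mu}f\to0$ weakly ``because $e^{-tG_\mu}\to0$ strongly'' does not follow: strong decay of the semigroup gives no control of the unbounded $X_\mu$ sandwiched between the semigroups, and $\norm{X_\mu^{1/2}e^{-tG_\mu}f}_\mu$ could a priori grow.

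The paper fixes both endpoints at once by using the hypothesis whose role you only guessed at: it runs the argument for $f$ factorized through $G_\mu^{1/2}$ (it works with $f\in\Ran G_\mu$, which is all that is needed later), so that $X_\mu^{1/2}e^{-tG_\mu}f=X_\mu^{1/2}G_\mu^{1/2}e^{-tG_\mu}g$ is well defined, $F(s)\le\mu(\bbR_+)\norm{e^{-sG_\mu}g}_\mu^2\to0$ as $s\to\infty$ by triviality of $\Ker G_\mu$, and the $t\to0$ limit is controlled by the bounded operator $X_\mu^{1/2}G_\mu^{1/2}$ together with strong continuity of the semigroup --- no convergence ``in $\Dom X_\mu$'' is required. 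Note also that the identity is asserted for $f\in\Dom G_\mu$, not $f\in\Dom X_\mu$ as you write in places; no extension to $\Dom X_\mu^{1/2}$ by density/closedness is attempted at this stage (it is obtained only later, as a by-product of Theorem~\ref{thm.c7}). So to complete your proof along your own lines you must first establish the two bounds above; once they are in place, your semigroup computation becomes the paper's proof.
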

We start by observing that \eqref{c3a} is completely straightforward. Indeed, we have 
\[
\jap{f,e^{-tX_\mu}\1_\mu}_\mu=(\calL_\mu f)(t), 
\]
and so \eqref{c3a} is equivalent to \eqref{c11} and holds true for all $f\in\Dom G_\mu^{1/2}$. Moreover, \eqref{c3a} does not require the measure $\mu$ to be finite. If $\mu$ is not finite, then $\1_\mu$ does not make sense as an element of $L^2(\mu)$, but $e^{-tX_\mu}\1_\mu$ still makes sense. 

In contrast, the identity \eqref{c4a} for $X_\mu$ is subtle and \emph{does} require
$\mu$ to be finite (see Remark after Lemma~\ref{lma.c5}). 
We will prove \eqref{c4a} in Section~\ref{sec.aa4}. This requires some technical 
preparations. 

\begin{remark}
One can eventually prove \eqref{c4a} for 
all $f\in\Dom X_\mu^{1/2}$,  but the proof for $f\in\Dom G_\mu$ is slightly easier and this is all we need. 
In fact, Theorem \ref{thm.c7} below gives a unitary equivalence of triples $(G_\mu, X_\mu, \1_\mu)$ and $(X_\sigma, G_\sigma, \1_\sigma)$. Using this and applying \eqref{c3a} to the measure $\sigma$ in place of $\mu$, we obtain \eqref{c4a} for all $f\in \Dom X_\mu^{1/2}$. 
\end{remark}

\subsection{The boundedness of $X_\mu{G_\mu}$ and ${G_\mu} X_\mu$}

\begin{lemma}\label{lma.c2}
Let $\mu\in\calM_\finite$.
\begin{enumerate}[\rm (i)]
\item
If $f\in\Dom X_\mu$, then $X_\mu f\in\Dom{G_\mu}$ and 
\[
\norm{{G_\mu} X_\mu f}_\mu\leq \mu(\bbR_+)\norm{f}_\mu.
\]
 Thus, the operator ${G_\mu} X_\mu$, defined initially on $\Dom X_\mu$, extends to $L^2(\mu)$ as a bounded operator.
\item
If $f\in\Dom{G_\mu}$, then ${G_\mu} f\in\Dom X_\mu$ and 
\[
\norm{X_\mu {G_\mu} f}_\mu\leq \mu(\bbR_+)\norm{f}_\mu.
\] 
Thus, the operator $X_\mu {G_\mu}$, defined initially on $\Dom{G_\mu}$, extends to $L^2(\mu)$ as a bounded operator. 
\end{enumerate}
\end{lemma}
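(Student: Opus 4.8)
The plan is to prove part (i) directly via the explicit formula \eqref{c1} and a uniform bound, and then deduce part (ii) by a symmetry/duality argument, since $G_\mu$ is self-adjoint and $X_\mu$ is self-adjoint and the Lyapunov relation \eqref{z8} is symmetric in the two operators. First I would take $f\in\calD_\mu$ (bounded, compactly supported away from the origin), so that $X_\mu f\in\calD_\mu$ as well, and apply Lemma~\ref{lma.c1}: then $(G_\mu X_\mu f)(x)=\int_0^\infty \frac{y f(y)}{x+y}\dd\mu(y)$. The pointwise bound $\frac{y}{x+y}\le 1$ gives $\abs{(G_\mu X_\mu f)(x)}\le \int_0^\infty \abs{f(y)}\dd\mu(y)\le \mu(\bbR_+)^{1/2}\norm{f}_\mu$ by Cauchy--Schwarz, which is an $L^\infty(\mu)$ bound, not yet the desired $L^2(\mu)$ bound with constant $\mu(\bbR_+)$. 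To get the sharp $L^2$ bound, I would instead use the factorisation $G_\mu X_\mu = L_\mu^* L_\mu X_\mu$ together with the Lyapunov identity \eqref{z8} in the form $G_\mu X_\mu = \jap{\bigcdot,\1_\mu}_\mu\1_\mu - X_\mu G_\mu$ on $\calD_\mu$; alternatively, and more cleanly, write $(G_\mu X_\mu f)(x)=\int \frac{y}{x+y}f(y)\dd\mu(y)$ and split $\frac{y}{x+y}=1-\frac{x}{x+y}$, so that $G_\mu X_\mu f = \jap{f,\1_\mu}_\mu\1_\mu - X_\mu G_\mu f$ pointwise for $f\in\calD_\mu$. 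This already shows that the two operators in (i) and (ii) differ by the rank-one bounded operator $\jap{\bigcdot,\1_\mu}_\mu\1_\mu$, which has norm $\mu(\bbR_+)$, so the two claims are essentially the same statement; it remains to bound one of them.

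For the actual norm bound I would argue as follows. For $f\in\calD_\mu$, set $g=\calL_\mu(X_\mu f)$, i.e. $g(t)=\int_0^\infty e^{-tx}xf(x)\dd\mu(x)$; note $g(t)=-\frac{d}{dt}\calL_\mu f(t)$. Then $G_\mu X_\mu f=L_\mu^* g=\calL g$, so $\norm{G_\mu X_\mu f}_\mu^2=\int_0^\infty\abs{\calL g(x)}^2\dd\mu(x)$. I would estimate $\abs{\calL g(x)}=\abs{\int_0^\infty e^{-tx}g(t)\dd t}$ and, using $g(t)=-\frac{d}{dt}(\calL_\mu f)(t)$ and integration by parts (the boundary term at $t=0$ is $\calL_\mu f(0)=\int f\dd\mu$, and at $t=\infty$ it vanishes since $f$ is supported away from $0$ so $\calL_\mu f$ decays exponentially), obtain $\calL g(x)=-\calL_\mu f(0)+x\int_0^\infty e^{-tx}(\calL_\mu f)(t)\dd t = -\jap{f,\1_\mu}_\mu + x(G_\mu f)(x)$, which is just the pointwise identity from the previous paragraph again. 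So the clean route is: prove $\norm{X_\mu G_\mu f}_\mu\le\mu(\bbR_+)\norm{f}_\mu$ for $f\in\Dom G_\mu$ and deduce (i) from (ii) via the rank-one correction, rather than the other way around. For (ii), write $h=G_\mu f\in L^2(\mu)$ and estimate $\norm{X_\mu h}_\mu^2=\int_0^\infty x^2\abs{h(x)}^2\dd\mu(x)$. Using $h(x)=(G_\mu f)(x)$ and the factorisation $h=\calL(\calL_\mu f)$ we have $xh(x)=x\int_0^\infty e^{-tx}(\calL_\mu f)(t)\dd t=-\int_0^\infty e^{-tx}\frac{d}{dt}\big[(\calL_\mu f)(t)\big]\dd t + (\calL_\mu f)(0)$, but since $(\calL_\mu f)(0)$ may diverge for general $f\in\Dom G_\mu$, I would instead work with $f\in\calD_\mu$ first (where everything is finite), establish the bound there with the explicit constant, and then extend by density: $\calD_\mu$ is a core for $G_\mu$ appropriately, or more simply one extends $G_\mu X_\mu$ by continuity from $\Dom X_\mu\cap\calD_\mu$ after checking $X_\mu G_\mu$ is bounded on the dense set $\calD_\mu$ and that $G_\mu$ maps $\calD_\mu$ into $\Dom X_\mu$.

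Concretely, here is the cleanest version I would write up. For $f\in\calD_\mu$ with $\supp f\subset[a,\infty)$, set $u(t)=(\calL_\mu f)(t)$; then $\abs{u(t)}\le Ce^{-at}$ and $u'(t)=-(\calL_\mu(X_\mu f))(t)$, with $\abs{u'(t)}\le C'e^{-at}$ as well, and $u(0^+)=\int_0^\infty f\dd\mu=\jap{f,\1_\mu}_\mu$. Integration by parts gives, for every $x>0$,
\[
x\,(\calL u)(x)=x\int_0^\infty e^{-tx}u(t)\dd t = u(0)+\int_0^\infty e^{-tx}u'(t)\dd t = \jap{f,\1_\mu}_\mu - (\calL(\calL_\mu(X_\mu f)))(x).
\]
Since $(G_\mu f)(x)=(\calL u)(x)$ and $(G_\mu X_\mu f)(x)=(\calL(\calL_\mu(X_\mu f)))(x)$, this reads
\[
X_\mu G_\mu f = \jap{f,\1_\mu}_\mu\,\1_\mu - G_\mu X_\mu f,
\]
i.e. the Lyapunov equation \eqref{c2} holds on $\calD_\mu$. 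Now from $G_\mu X_\mu f = \jap{f,\1_\mu}_\mu\1_\mu - X_\mu G_\mu f$ and $(X_\mu G_\mu f)(x)=x(\calL u)(x)=\int_0^\infty xe^{-tx}u(t)\dd t$ with $u(t)=(\calL_\mu f)(t)=\jap{f,e^{-tX_\mu}\1_\mu}_\mu$, I get
\[
(G_\mu X_\mu f)(x)=\int_0^\infty \frac{y}{x+y}f(y)\dd\mu(y),
\]
and then Minkowski's integral inequality in $L^2(\mu,\dd\mu(x))$ gives
\[
\norm{G_\mu X_\mu f}_\mu \le \int_0^\infty \abs{f(y)}\,\Norm{\frac{y}{x+y}}_{L^2(\mu,\dd\mu(x))}\dd\mu(y)\le \int_0^\infty\abs{f(y)}\,\mu(\bbR_+)^{1/2}\dd\mu(y)\le\mu(\bbR_+)\norm{f}_\mu,
\]
using $\frac{y}{x+y}\le 1$ and Cauchy--Schwarz. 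The analogous estimate for $X_\mu G_\mu f$ follows either by the same Minkowski argument applied to $\frac{x}{x+y}\le 1$, or from the Lyapunov identity and the triangle inequality (noting $\norm{\jap{f,\1_\mu}_\mu\1_\mu}_\mu\le\mu(\bbR_+)\norm{f}_\mu$ — though this would give a worse constant, so I would use the direct Minkowski estimate for the sharp bound). Finally I would extend from $\calD_\mu$ to all of $\Dom X_\mu$ (resp. $\Dom G_\mu$): for $f\in\Dom X_\mu$, approximate by truncations $f_n=f\cdot\1_{[1/n,n]}\in\calD_\mu$ with $f_n\to f$ and $X_\mu f_n\to X_\mu f$ in $L^2(\mu)$, use the bound to see $G_\mu X_\mu f_n$ is Cauchy, and identify the limit using closedness of $G_\mu$; likewise for part (ii) using closedness of $X_\mu$.

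\textbf{Main obstacle.} The delicate point is the integration by parts justifying the Lyapunov identity on $\calD_\mu$ — specifically that the boundary term at $t=\infty$ vanishes and at $t=0$ equals $\jap{f,\1_\mu}_\mu$, which is exactly where finiteness of $\mu$ (so that $\1_\mu\in L^2(\mu)$ and $u(0)=\int f\dd\mu<\infty$) is used; this is the step that fails for general $\mu\in\calM$. The second, more technical, obstacle is the density/closability extension from $\calD_\mu$ to the full domains $\Dom X_\mu$ and $\Dom G_\mu$, where one must check that the truncations converge in the relevant graph norms and invoke Theorem~\ref{thm.b1} (closedness of $L_\mu$, hence of $G_\mu$) together with self-adjointness of $X_\mu$ to pass to the limit.
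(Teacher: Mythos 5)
Your part (i) is essentially the paper's proof: establish the pointwise formula $(G_\mu X_\mu f)(x)=\int_0^\infty \frac{y}{x+y}f(y)\,\dd\mu(y)$ on $\calD_\mu$ via Lemma~\ref{lma.c1}, bound it, and extend to all of $\Dom X_\mu$ by truncation plus closedness of $G_\mu$. Two small remarks there: your worry that the pointwise bound is ``not yet'' the $L^2(\mu)$ bound is unfounded, since for finite $\mu$ one has $\norm{g}_\mu\le\mu(\bbR_+)^{1/2}\sup\abs{g}$, which is exactly how the paper turns the sup bound $\mu(\bbR_+)^{1/2}\norm{f}_\mu$ into the constant $\mu(\bbR_+)$ (your Minkowski route gives the same constant, so this is harmless); also $f\1_{[1/n,n]}$ need not be bounded, so one should truncate in value as well, which is trivial.

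The genuine gap is in part (ii). Your concrete write-up proves the bound for $X_\mu G_\mu f$ only for $f\in\calD_\mu$ and then extends ``likewise \dots using closedness of $X_\mu$''. For that extension you need, for every $f\in\Dom G_\mu$, a sequence $f_n\in\calD_\mu$ with $f_n\to f$ \emph{and} $G_\mu f_n\to G_\mu f$ in $L^2(\mu)$, i.e.\ that $\calD_\mu$ is a core for the \emph{operator} $G_\mu$ in its graph norm. This is not available: Theorem~\ref{thm.b1} gives density of $\calD_\mu$ in $\Dom L_\mu$ in the graph norm of $L_\mu$, i.e.\ only a \emph{form} core for $G_\mu$, and the truncation argument that works for $X_\mu$ (dominated convergence) gives no control of $G_\mu f_n-G_\mu f$. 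You flag this as an obstacle at the end but do not resolve it, and resolving it head-on is not routine. The paper avoids the issue entirely by the duality argument you gesture at in your opening sentence but then abandon: by part (i), for $f\in\Dom G_\mu$ and $g\in\Dom X_\mu$ one has $\jap{X_\mu g,G_\mu f}_\mu=\jap{G_\mu X_\mu g,f}_\mu$, hence $\abs{\jap{X_\mu g,G_\mu f}_\mu}\le\mu(\bbR_+)\norm{g}_\mu\norm{f}_\mu$ for all $g\in\Dom X_\mu$; since $X_\mu$ is self-adjoint, this already yields $G_\mu f\in\Dom X_\mu$ together with the bound, with no core property needed. With that replacement (and dropping the integration-by-parts detour, since the Lyapunov identity on $\calD_\mu$ is immediate from the algebraic splitting $\frac{x+y}{x+y}=1$ once the pointwise formula is known, which is how the paper derives it afterwards), your argument coincides with the paper's.
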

\begin{proof} Part (i):
first let $f\in\calD_\mu$. Then it is clear that $xf\in\calD_\mu$ and 
\[
\abs{({G_\mu} X_\mu f)(x)}
\leq \int_0^\infty\frac{y\abs{f(y)}}{x+y}\dd\mu(y)
\leq \int_0^\infty \abs{f(y)}\dd\mu(y)
\leq 
\mu(\bbR_+)^{1/2}\norm{f}_{\mu},
\]
and therefore 
\[
\norm{{G_\mu} X_\mu f}_\mu
\leq
\mu(\bbR_+)\norm{f}_\mu.
\]
Next, let $f\in\Dom X_\mu$. Choose a sequence $f_n\in\calD_\mu$ such that $f_n\to f$ and $X_\mu f_n\to X_\mu f$ in $L^2(\mu)$. Then by the above estimate the sequence ${G_\mu} X_\mu f_n$ is Cauchy and therefore converges to some element $F\in L^2(\mu)$. Since ${G_\mu}$ is a closed operator, it follows that $X_\mu f\in\Dom{G_\mu}$ and ${G_\mu} X_\mu f=F$. Now passing to the limit, we obtain the bound 
$\norm{{G_\mu} X_\mu f}_\mu\leq\mu(\bbR_+) \norm{f}_\mu$. 

Part (ii): in brief, this follows from part (i) by taking adjoints. In more detail:
let $f\in\Dom{G_\mu}$ and $g\in\Dom X_\mu$. Then using part (i), 
\[
\abs{\jap{X_\mu g,{G_\mu} f}_\mu}
=\abs{\jap{{G_\mu} X_\mu g,f}_\mu}
\leq \norm{{G_\mu} X_\mu g}_\mu\norm{f}_\mu
\leq \mu(\bbR_+)\norm{g}_\mu\norm{f}_\mu.
\]
Since this is true for all $g\in\Dom X_\mu$, we find that ${G_\mu} f\in\Dom X_\mu$ and 
\[
\jap{X_\mu g,{G_\mu} f}_\mu=\jap{g,X_\mu{G_\mu} f}_\mu.
\]
Now $\norm{X_\mu {G_\mu} f}_\mu\leq \mu(\bbR_+)\norm{f}_\mu$ follows from the above estimate. 
\end{proof}

\begin{corollary}
For $\mu\in\calM_\finite$, the Lyapunov equation
\[
X_\mu{G_\mu}+{G_\mu} X_\mu=\jap{\bigcdot,\1_\mu}_\mu \1_\mu
\]
holds true.
\end{corollary}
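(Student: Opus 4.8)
The plan is to establish the identity first on a dense subspace by a pointwise computation, and then extend it to all of $L^2(\mu)$ using the boundedness obtained in Lemma~\ref{lma.c2}.

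\textbf{Step 1: the identity on $\calD_\mu$.} I would take $f\in\calD_\mu$. Since $f$ is bounded and compactly supported at a positive distance from the origin, the same is true of the function $x\mapsto xf(x)$, so $X_\mu f\in\calD_\mu$ as well. By Lemma~\ref{lma.c1}, both $G_\mu f$ and $G_\mu X_\mu f$ are then given by the integral formula \eqref{c1}; and by Lemma~\ref{lma.c2}(ii), $G_\mu f\in\Dom X_\mu$, so $X_\mu G_\mu f$ is legitimately obtained by multiplying the expression \eqref{c1} for $G_\mu f$ by $x$. Adding the two expressions and using the elementary identity already recorded in \eqref{z8}, one gets, for every $x>0$,
\[
(X_\mu G_\mu f)(x)+(G_\mu X_\mu f)(x)=\int_0^\infty\frac{x+y}{x+y}f(y)\dd\mu(y)=\int_0^\infty f(y)\dd\mu(y)=\jap{f,\1_\mu}_\mu,
\]
which is precisely the value of the constant function $\jap{f,\1_\mu}_\mu\1_\mu$ (the right-hand side being independent of $x$). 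Hence the Lyapunov equation holds when both sides are applied to any $f\in\calD_\mu$.

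\textbf{Step 2: extension by density.} By Lemma~\ref{lma.c2}, the operators $X_\mu G_\mu$ and $G_\mu X_\mu$, initially defined on $\Dom G_\mu$ and $\Dom X_\mu$ respectively, both extend to bounded operators on $L^2(\mu)$; the right-hand side $\jap{\bigcdot,\1_\mu}_\mu\1_\mu$ is rank one, hence bounded. Since $\mu$ is a finite measure on $\bbR_+$ with $\mu(\{0\})=0$, the set $\calD_\mu$ of bounded functions compactly supported away from the origin is dense in $L^2(\mu)$. An identity between bounded operators that holds on a dense subspace holds everywhere, and this proves the corollary.

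\textbf{Main obstacle.} I do not expect a genuine obstacle here: the delicate point — making sense of the products $X_\mu G_\mu$ and $G_\mu X_\mu$ of two (in general unbounded) operators as \emph{bounded} operators on $L^2(\mu)$ — has already been settled in Lemma~\ref{lma.c2}, and the pointwise algebra reduces to $\frac{x}{x+y}+\frac{y}{x+y}=1$. The only point deserving a word of care is the density of $\calD_\mu$ in $L^2(\mu)$, which is standard because $\mu$ is a finite Radon measure charging no mass at the origin.
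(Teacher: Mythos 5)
Your proof is correct and follows essentially the same route as the paper: verify the identity pointwise on $\calD_\mu$ via the computation \eqref{z8}, then extend by density using the boundedness of $X_\mu G_\mu$ and $G_\mu X_\mu$ established in Lemma~\ref{lma.c2}. The paper's own proof is just a compressed version of your two steps ("perform the calculation \eqref{z8} on $f\in\calD_\mu$ and then take the closure"), so nothing further is needed.
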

\begin{proof} 
Perform the calculation \eqref{z8} on $f\in\calD_\mu$ and then take the closure. 
By Lemma~\ref{lma.c2}, both operators on the left hand side are bounded. 
\end{proof}

Next, we need a technical lemma on domains. 

\begin{lemma}\label{lma.c3a}
Let $\mu\in\calM_\finite$.
\begin{enumerate}[\rm (i)]
\item
If $f\in \Dom X_\mu^{1/2}$, then $X_\mu^{1/2}f\in\Dom G_\mu^{1/2}$ and 
\[
\norm{G_\mu^{1/2}X_\mu^{1/2}f}_\mu\leq\mu(\bbR_+)^{1/2}\norm{f}_\mu.
\]
Furthermore, $\Ran X_\mu$ is dense in $\Dom G_\mu^{1/2}$ in the graph norm of $G_\mu^{1/2}$. 
\item
If $f\in \Dom G_\mu^{1/2}$, then $G_\mu^{1/2}f\in\Dom X_\mu^{1/2}$ and 
\[
\norm{X_\mu^{1/2}G_\mu^{1/2}f}_\mu\leq\mu(\bbR_+)^{1/2}\norm{f}_\mu.
\]
Furthermore, $\Ran {G_\mu}$ is dense in $\Dom X_\mu^{1/2}$ in the graph norm of $X_\mu^{1/2}$. 
\end{enumerate}
\end{lemma}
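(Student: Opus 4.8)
The plan is to deduce everything from the already-established operator identities. The key input is Lemma~\ref{lma.c2}, which says that $X_\mu G_\mu$ and $G_\mu X_\mu$ extend to bounded operators on $L^2(\mu)$ with norm $\le\mu(\bbR_+)$, together with the Lyapunov equation $X_\mu G_\mu + G_\mu X_\mu = \jap{\bigcdot,\1_\mu}_\mu\1_\mu$. The standard tool for passing from a bound on a product $AB$ to a bound on $A^{1/2}B^{1/2}$ for positive self-adjoint $A,B$ is the Heinz--Kato inequality (or, equivalently, an interpolation/moment argument): if $A,B\ge0$ are self-adjoint and $\norm{Bf}\le c\norm{Af}$ on a core, then $\norm{B^{1/2}f}\le c^{1/2}\norm{A^{1/2}f}$. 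Here one would like to apply this with $A=X_\mu^{-1}$-type reasoning, but since $X_\mu$ may fail to be invertible with bounded inverse, it is cleaner to argue as follows.

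First I would prove part (i). For $f\in\calD_\mu$, a direct estimate using \eqref{c1} and the Lyapunov identity gives
\[
\norm{G_\mu^{1/2}X_\mu^{1/2}f}_\mu^2 = \jap{G_\mu X_\mu^{1/2}f, X_\mu^{1/2}f}_\mu.
\]
Using the Lyapunov equation in the form $G_\mu X_\mu = \jap{\bigcdot,\1_\mu}_\mu\1_\mu - X_\mu G_\mu$ one rewrites this, but the cleaner route is the quadratic-form computation: by \eqref{c3a}, $\norm{G_\mu^{1/2}h}_\mu^2 = \int_0^\infty|\jap{h,e^{-tX_\mu}\1_\mu}_\mu|^2\,\dd t$; taking $h=X_\mu^{1/2}f$ and writing $\jap{X_\mu^{1/2}f, e^{-tX_\mu}\1_\mu}_\mu = \jap{f, X_\mu^{1/2}e^{-tX_\mu}\1_\mu}_\mu$, one estimates $\norm{X_\mu^{1/2}e^{-tX_\mu}\1_\mu}_\mu^2 = \int_0^\infty xe^{-2tx}\,\dd\mu(x) \le \frac1{2t}\mu(\bbR_+)$ by the elementary bound $xe^{-2tx}\le 1/(2te)$ — actually the sharper $\int_0^\infty x e^{-2tx}d\mu(x)$ is not integrable in $t$ near $0$, so instead one keeps the inner product intact and integrates by parts in $t$, using $\frac{d}{dt}e^{-tX_\mu}\1_\mu = -X_\mu e^{-tX_\mu}\1_\mu$, to convert $\int_0^\infty |\jap{f,X_\mu^{1/2}e^{-tX_\mu}\1_\mu}_\mu|^2\,dt$ into a telescoping expression controlled by $\jap{\bigcdot,\1_\mu}_\mu$. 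This reproduces exactly the "formal" Lyapunov manipulation of Section~\ref{sec.c2} at the level of quadratic forms and yields $\norm{G_\mu^{1/2}X_\mu^{1/2}f}_\mu^2\le\mu(\bbR_+)\norm{f}_\mu^2$ for $f\in\calD_\mu$. One then extends to all $f\in\Dom X_\mu^{1/2}$ by approximation: $\calD_\mu$ is a core for $X_\mu$, hence (since $X_\mu^{1/2}$ is a function of $X_\mu$) also a core for $X_\mu^{1/2}$, and closedness of $G_\mu^{1/2}$ lets one pass to the limit, giving simultaneously $X_\mu^{1/2}f\in\Dom G_\mu^{1/2}$ and the bound. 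For the density statement, one shows $\Ran X_\mu$ is dense in $\Dom G_\mu^{1/2}$ in the graph norm: any $h\in\Dom G_\mu^{1/2}$ orthogonal (in the graph inner product) to $\Ran X_\mu$ would satisfy $\jap{h,X_\mu g}_\mu + \jap{G_\mu^{1/2}h, G_\mu^{1/2}X_\mu g}_\mu = 0$ for all $g\in\Dom X_\mu$; using the just-proved bound to identify $G_\mu^{1/2}X_\mu^{1/2}$ as bounded and the fact that $\Ran X_\mu^{1/2}$ is dense (since $\Ker X_\mu=\{0\}$, as $\mu$ has no atom at $0$), one forces $h=0$.

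Part (ii) follows from part (i) by a duality/adjoint argument essentially identical to the passage from Lemma~\ref{lma.c2}(i) to Lemma~\ref{lma.c2}(ii): for $f\in\Dom G_\mu^{1/2}$ and $g\in\Dom X_\mu^{1/2}$ one has $|\jap{X_\mu^{1/2}g,G_\mu^{1/2}f}_\mu| = |\jap{G_\mu^{1/2}X_\mu^{1/2}g,f}_\mu|\le\mu(\bbR_+)^{1/2}\norm{g}_\mu\norm{f}_\mu$; since $X_\mu^{1/2}$ is self-adjoint this shows $G_\mu^{1/2}f\in\Dom X_\mu^{1/2}$ with the stated bound, and the density of $\Ran G_\mu$ in $\Dom X_\mu^{1/2}$ is proved symmetrically, again using $\Ker G_\mu=\Ker L_\mu=\{0\}$ from Theorem~\ref{thm.b1}. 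The main obstacle I anticipate is making the quadratic-form version of the Lyapunov manipulation in part (i) rigorous — specifically, justifying the integration by parts in $t$ and the interchange of the $t$-integral with the inner products for $f$ merely in $\calD_\mu$, where one must be careful that $X_\mu^{1/2}e^{-tX_\mu}\1_\mu$ blows up as $t\to0^+$ and only the paired quantity $\jap{f,X_\mu^{1/2}e^{-tX_\mu}\1_\mu}_\mu$ is controlled; this is exactly the point where finiteness of $\mu$ (so that $\1_\mu\in L^2(\mu)$ and $\jap{\cdot,\1_\mu}_\mu\1_\mu$ is a genuine bounded rank-one operator) is used, mirroring the remark after Lemma~\ref{lma.c5}.
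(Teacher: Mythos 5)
Your overall architecture (establish the bound on $\calD_\mu$, extend by the core property of $\calD_\mu$ for $X_\mu^{1/2}$ together with closedness of $G_\mu^{1/2}$, then get (ii) from (i) by duality through $\jap{X_\mu^{1/2}g,G_\mu^{1/2}f}_\mu=\jap{G_\mu^{1/2}X_\mu^{1/2}g,f}_\mu$) is sound, and it differs from the paper, which obtains both norm bounds in one stroke by applying the Heinz inequality to Lemma~\ref{lma.c2}. But the decisive estimate of part (i) is not actually proved in your proposal. You correctly observe that Cauchy--Schwarz against $\norm{X_\mu^{1/2}e^{-tX_\mu}\1_\mu}_\mu$ fails (the resulting $t$-integral diverges), but the ``integration by parts / telescoping'' you offer instead is not carried out, and as described it only reproduces the identity \eqref{c3a}; it does not by itself yield the inequality. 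The fix is elementary and you were one step away from it: for $f\in\calD_\mu$, Fubini gives $\int_0^\infty\abs{\jap{X_\mu^{1/2}f,e^{-tX_\mu}\1_\mu}_\mu}^2\dd t=\int_0^\infty\!\int_0^\infty \frac{\sqrt{xy}}{x+y}\,f(x)\overline{f(y)}\,\dd\mu(x)\dd\mu(y)$, and the pointwise bound $\sqrt{xy}\le\tfrac12(x+y)$ gives at most $\tfrac12\mu(\bbR_+)\norm{f}_\mu^2$ (the same computation can be run directly on $\jap{G_\mu X_\mu^{1/2}f,X_\mu^{1/2}f}_\mu$ via \eqref{c1}). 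Without this (or the paper's Heinz argument), part (i), and hence your derivation of part (ii), is incomplete.

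The density statements are also asserted rather than proved. In (i), your orthogonality condition amounts to $\jap{h,(I+G_\mu)X_\mu g}_\mu=0$ for all $g\in\Dom X_\mu$, and density of $\Ran X_\mu^{1/2}$ does not force $h=0$, because $I+G_\mu$ is unbounded; ``one forces $h=0$'' is precisely the step that needs an argument. The paper sidesteps this: $\calD_\mu\subset\Ran X_\mu$ (divide by $x$), and $\calD_\mu$ is dense in $\Dom G_\mu^{1/2}$ in the graph norm because that norm is the form norm of $G_\mu$, i.e.\ the graph norm of $L_\mu$, and density is part of Theorem~\ref{thm.b1} -- this is where the genuinely nontrivial input lives, and your sketch does not replace it. In (ii), ``proved symmetrically'' likewise hides the real work: the paper's soft argument first upgrades the graph-orthogonal $f$ to $\Dom G_\mu$ using the boundedness of $X_\mu G_\mu$ (Lemma~\ref{lma.c2}(ii)), then deduces $G_\mu(f+X_\mu f)=0$ and concludes from $\Ker G_\mu=\{0\}$ and $I+X_\mu\ge I$. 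You should either supply arguments of this type or invoke Theorem~\ref{thm.b1} as the paper does.
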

\begin{proof}
(i) Let us rephrase Lemma~\ref{lma.c2}(i) as follows:
$\Dom X_\mu^{-1}\subset\Dom{G_\mu}$ and $\norm{{G_\mu} f}_\mu\leq\mu(\bbR_+) \norm{X_\mu^{-1}f}_\mu$ for all $f\in \Dom X_\mu^{-1}$. Now by the Heinz inequality, it follows that $\Dom X_\mu^{-1/2}\subset\Dom G_\mu^{1/2}$ and $\norm{G_\mu^{1/2} f}_\mu\leq \mu(\bbR_+)^{1/2}\norm{X_\mu^{-1/2}f}_\mu$ for all $f\in \Dom X_\mu^{-1/2}$. This proves the first statement. 

In order to prove the denseness, we observe that $\calD_\mu\subset\Ran X_\mu$. Thus, it suffices to prove that $\calD_\mu$ is dense in $\Dom G_\mu^{1/2}$ in the graph norm of $G_\mu^{1/2}$. But the domain of $G_\mu^{1/2}$ is exactly the form domain of ${G_\mu}$ and the graph norm of $G_\mu^{1/2}$ is exactly the norm induced by the quadratic form \eqref{c11} of $G_\mu$; thus the statement follows from Theorem~\ref{thm.b1}.

(ii) As in the proof of part (i), the first statement follows from Lemma~\ref{lma.c2}(ii) by using the Heinz inequality. Let us prove denseness. Here it is more convenient to use a ``soft'' argument (which we could have used in part (i) as well). Suppose, to get a contradiction, that for some $f\in\Dom X_\mu^{1/2}$ we have 
\begin{equation}
\jap{f,G_\mu g}_\mu+\jap{X_\mu^{1/2}f,X_\mu^{1/2}G_\mu g}_\mu=0, \quad \forall g\in \Dom G_\mu.
\label{c7a}
\end{equation}
By Lemma~\ref{lma.c2}(ii), we have $X_\mu^{1/2}G_\mu g\in \Dom X_\mu^{1/2}$ and so we can rewrite \eqref{c7a} as 
\[
\jap{f,G_\mu g}_\mu+\jap{f,X_\mu G_\mu g}_\mu=0. 
\]
Since $X_\mu G_\mu$ is bounded, from here we get
\[
\abs{\jap{f,G_\mu g}_\mu}=\abs{\jap{f,X_\mu G_\mu g}_\mu}\leq \mu(\bbR_+)\norm{f}_\mu\norm{g}_\mu
\]
and so $f\in\Dom G_\mu$. Thus, we can rewrite \eqref{c7a} as
\[
\jap{G_\mu f,g}_\mu+\jap{G_\mu X_\mu f,g}_\mu=0, \quad \forall g\in\Dom G_\mu.
\]
It follows that $G_\mu(f+X_\mu f)=0$. Since the kernel of $G_\mu$ is trivial, this yields $f+X_\mu f=0$ and therefore $f=0$, as required. 
\end{proof}

\subsection{Proof of Lemma~\ref{lma.c4}}\label{sec.aa4}
As already discussed, we only need to prove \eqref{c4a}. For $f\in\Ran G_\mu$ and $t>0$, we have $e^{-tG_\mu}f\in\Ran G_\mu^{1/2}$ and so, by Lemma~\ref{lma.c3a}(ii), the element $X_\mu^{1/2}e^{-tG_\mu}f$ is well-defined. Thus, we may consider
\[
F(t)=\norm{X_\mu^{1/2}e^{-tG_\mu}f}_\mu^2, \quad t>0.
\]
For $t>0$, we have 
\[
\frac{d}{dt}e^{-tG_\mu}f=-G_\mu e^{-tG_\mu}f.
\]
It follows that for $t>0$, 
\[
F'(t)=
-\jap{X_\mu^{1/2}G_\mu e^{-tG_\mu}f,X_\mu^{1/2}e^{-tG_\mu}f}_\mu
-\jap{X_\mu^{1/2} e^{-tG_\mu}f,X_\mu^{1/2}G_\mu e^{-tG_\mu}f}_\mu.
\]
By Lemma~\ref{lma.c2}(ii), we have $G_\mu e^{-tG_\mu}f\in\Dom{X_\mu}$. Thus, we can rewrite the above expression as
\begin{align*}
F'(t)&=
-\jap{{X_\mu} G_\mu e^{-tG_\mu}f,e^{-tG_\mu}f}_\mu
-\jap{ e^{-tG_\mu}f,{X_\mu} G_\mu e^{-tG_\mu}f}_\mu
\\
&=
-\jap{({X_\mu} G_\mu+G_\mu{X_\mu})e^{-tG_\mu}f, e^{-tG_\mu}f}_\mu
\\
&=-\abs{\jap{e^{-tG_\mu}f,\1_\mu}_\mu}^2,
\end{align*}
where we have used the Lyapunov equation at the last step. 
Integrating from $0$ to $s$, we find
\[
F(0)=F(s)+\int_0^s \abs{\jap{e^{-tG_\mu}f,\1_\mu}_\mu}^2\dd t.
\]
Finally, we wish to take $s\to\infty$. 
Since the kernel of $G_\mu$ is trivial, we have $e^{-sG_\mu}\to0$ in the strong operator topology as $s\to\infty$. Since by assumption $f\in\Ran G_\mu$, we can write $f=G_\mu^{1/2}g$ with some $g\in L^2(\mu)$, which allows us to conclude that 
\[
F(s)
=\norm{X_\mu^{1/2}e^{-sG_\mu}G_\mu^{1/2}g}_\mu^2
=\norm{X_\mu^{1/2}G_\mu^{1/2}e^{-sG_\mu}g}_\mu^2\to0
\]
as $s\to\infty$ (here we have used Lemma~\ref{lma.c3a}(ii)). 
\qed

\subsection{The element $\1_\mu$ is cyclic}

\begin{lemma}\label{lma.c5}
Let $\mu\in\calM_\finite$. Then the spectrum of ${G_\mu}$ is simple and $\1_\mu$ is a cyclic element.
\end{lemma}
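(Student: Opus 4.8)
\textbf{Proof plan for Lemma~\ref{lma.c5}.}
The plan is to show that the closed linear span of $\{e^{-tG_\mu}\1_\mu : t>0\}$ is all of $L^2(\mu)$; since $e^{-tG_\mu}$ is a bounded function of $G_\mu$, cyclicity of $\1_\mu$ for the semigroup implies cyclicity for $G_\mu$ (and hence simplicity of the spectrum). Suppose, for contradiction, that $f\in L^2(\mu)$ is orthogonal to $e^{-tG_\mu}\1_\mu$ for all $t>0$. The natural idea is to exploit the second Lyapunov-type identity \eqref{c4a} from Lemma~\ref{lma.c4}, which says
\[
\norm{X_\mu^{1/2}f}_\mu^2=\int_0^\infty \abs{\jap{f,e^{-t{G_\mu}}\1_\mu}_\mu}^2\dd t
\]
for $f\in\Dom G_\mu$. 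If the orthogonality $\jap{f,e^{-tG_\mu}\1_\mu}_\mu=0$ held for such an $f$, the right-hand side would vanish, forcing $X_\mu^{1/2}f=0$, i.e. $xf(x)=0$ $\mu$-a.e.; since $\mu$ is supported on $\bbR_+$ (no mass at $0$), this gives $f=0$.

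First I would reduce to the case $f\in\Dom G_\mu$. Given an arbitrary $f$ orthogonal to all $e^{-tG_\mu}\1_\mu$, note that $e^{-sG_\mu}f$ also has this property for every $s>0$ (because $e^{-sG_\mu}$ commutes with $e^{-tG_\mu}$ and is self-adjoint), and $e^{-sG_\mu}f\in\Ran G_\mu\subset\Dom G_\mu$ — in fact $e^{-sG_\mu}f\in\Dom G_\mu^n$ for all $n$. Applying \eqref{c4a} to $e^{-sG_\mu}f$ in place of $f$ yields $X_\mu^{1/2}e^{-sG_\mu}f=0$, hence $e^{-sG_\mu}f=0$ for every $s>0$. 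Letting $s\to0$ and using that $e^{-sG_\mu}\to I$ strongly (since $G_\mu$ is self-adjoint and bounded below by $0$), we get $f=0$. This closes the contradiction and shows $\1_\mu$ is cyclic, so the spectrum of $G_\mu$ is simple.

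The one point requiring a little care is the passage from ``$\1_\mu$ generates $L^2(\mu)$ under the semigroup $\{e^{-tG_\mu}\}$'' to ``$\1_\mu$ is cyclic for $G_\mu$'' in the sense of the definition in Section~\ref{sec.z6} (i.e. $\{\varphi(G_\mu)\1_\mu : \varphi\text{ bounded continuous}\}$ is dense). This is standard: the closed subspace generated by $\{e^{-tG_\mu}\1_\mu\}$ is a reducing subspace for $G_\mu$ containing $\1_\mu$, so it contains $\varphi(G_\mu)\1_\mu$ for every bounded continuous $\varphi$; conversely each $e^{-tG_\mu}\1_\mu=\varphi_t(G_\mu)\1_\mu$ with $\varphi_t(\lambda)=e^{-t\lambda}$ bounded continuous on $[0,\infty)$. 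So the two notions of the cyclic subspace coincide. The main obstacle — already dispatched by Lemma~\ref{lma.c4} — is precisely the validity of the identity \eqref{c4a}; everything else here is soft functional-analytic bookkeeping.
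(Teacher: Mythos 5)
Your argument is correct and follows essentially the same route as the paper: both deduce cyclicity from the identity \eqref{c4a} combined with the triviality of $\Ker X_\mu^{1/2}$, the only difference being that the paper picks a nonzero element of $\Ran G_\mu$ in the orthogonal complement of the cyclic subspace directly, rather than regularizing an arbitrary orthogonal $f$ with $e^{-sG_\mu}$ and letting $s\to0$. One harmless slip: $e^{-sG_\mu}f$ need not lie in $\Ran G_\mu$ (only in $\Dom G_\mu^n$ for every $n$), but membership in $\Dom G_\mu$ is all that \eqref{c4a} as stated requires, so your proof stands.
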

\begin{proof}
We use identity \eqref{c4a}. Suppose, to get a contradiction, that $\1_\mu$ is not 
cyclic. Then there exists $f\in\Ran{G_\mu}$ such that $f\perp\varphi({G_\mu})\1_\mu$ for all 
bounded continuous functions $\varphi$. In particular, $f\perp e^{-t{G_\mu}}\1_\mu$ for all $t>0$. 
Then by \eqref{c4a}, we have $X_\mu^{1/2}f=0$. Since the kernel of $X_\mu^{1/2}$ is trivial, it 
follows that $f=0$. 
\end{proof}

\begin{remark*}
We emphasize that the assumption that $\mu$ is finite is essential here. For example, for the Carleman operator (corresponding to $\mu$ being the Lebesgue measure on $\bbR_+$) the spectrum is the interval $[0,\pi]$ with multiplicity \emph{two}, and so there are no cyclic elements. 
\end{remark*}

\subsection{The spectral measure $\sigma$ and the spectral map $\Omega$}\label{sec.c6}
Let $\mu\in\calM_\finite$ and let $\sigma$ be the spectral measure of ${G_\mu}$ associated with the cyclic element $\1_\mu$, i.e. 
\[
\jap{\varphi({G_\mu})\1_\mu,\1_\mu}_\mu
=\int_{\bbR} \varphi(\lambda)\dd\sigma(\lambda)
\]
for bounded continuous functions $\varphi$. 
(In the next section we shall see that this is the same measure as the one defined by \eqref{z10}, \eqref{z11}.)
Since ${G_\mu}$ is positive semi-definite with a trivial kernel, we see that $\sigma$ is supported on $\bbR_+$ with no point mass at $0$; thus $\sigma\in\calM_\finite$. 
We set up the map
\[
\Omega:\calM_\finite\to\calM_\finite, \quad \mu\mapsto \sigma.
\]
(We shall soon see that this is the same map as the one discussed in Section~\ref{sec.z4}.)

\begin{theorem}\label{thm.c7}
For the spectral measure $\sigma$ constructed above
\begin{enumerate}[\rm (i)]
\item 
The triples $(G_\mu, X_\mu, \1_\mu)$ and $(X_\sigma, G_\sigma, \1_\sigma)$ are unitary equivalent, 
i.e.~there exists a unitary operator $U:L^2(\mu)\to L^2(\sigma)$ such that 
\begin{align*}
UG_\mu = X_\sigma U, 
\qquad  U X_\mu = G_\sigma U, 
\qquad U\1_\mu = \1_\sigma. 
\end{align*}
\item 
The map $\Omega$ is an involution on $\calM_\finite$ preserving the total mass of the measure. 
\end{enumerate}
\end{theorem}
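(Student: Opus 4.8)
The idea is to put $G_\mu$ in its spectral model via the cyclic vector $\1_\mu$, and then to recognise the image of $X_\mu$ under this model as the operator $G_\sigma$; the symmetry built into Lemma~\ref{lma.c4} is exactly what makes this possible. Part (ii) then follows at once.

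First I would apply the spectral theorem to $G_\mu$ with the cyclic vector $\1_\mu$ supplied by Lemma~\ref{lma.c5}, whose spectral measure is $\sigma$ by construction. This yields a unitary $U\colon L^2(\mu)\to L^2(\sigma)$, determined by $U\,\varphi(G_\mu)\1_\mu=\varphi$ for bounded continuous $\varphi$, such that
\[
UG_\mu U^*=X_\sigma,\qquad U\1_\mu=\1_\sigma,
\]
where $X_\sigma$ is multiplication by the independent variable in $L^2(\sigma)$; note $X_\sigma$ is injective because $\sigma$ has no atom at the origin, and that $\sigma\in\calM_\finite$, so all the constructions of Sections~\ref{sec.b}--\ref{sec.c} apply to $\sigma$ as well. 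It then remains to prove that $Y:=UX_\mu U^*$ coincides with $G_\sigma$; granting this, the three relations of part (i) hold with this $U$.

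To identify $Y$, I would transport the identity \eqref{c4a} of Lemma~\ref{lma.c4} through $U$. For $f\in\Ran G_\mu$ (the set on which \eqref{c4a} is in fact established in Section~\ref{sec.aa4}) set $g=Uf$; using $Ue^{-tG_\mu}\1_\mu=e^{-tX_\sigma}\1_\sigma$ and $X_\mu^{1/2}=U^*Y^{1/2}U$ one obtains
\[
\norm{Y^{1/2}g}_\sigma^2=\int_0^\infty\Abs{\jap{g,e^{-tX_\sigma}\1_\sigma}_\sigma}^2\dd t=\int_0^\infty\abs{(\calL_\sigma g)(t)}^2\dd t,
\]
and the right-hand side equals $\norm{G_\sigma^{1/2}g}_\sigma^2$ by \eqref{c11} applied to $\sigma$ (here one uses $\Ran X_\sigma\subset\Dom G_\sigma^{1/2}$, which is part of Lemma~\ref{lma.c3a}(i) for $\sigma$). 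Since $UG_\mu U^*=X_\sigma$ we have $U(\Ran G_\mu)=\Ran X_\sigma$, so the quadratic forms of $Y$ and of $G_\sigma$ agree on $\Ran X_\sigma$. The technical heart of the argument is to check that $\Ran X_\sigma$ is a form core for both operators: for $G_\sigma$ this is the density statement of Lemma~\ref{lma.c3a}(i) applied to $\sigma$; for $Y$ it amounts, after conjugating back by $U$, to the inclusion $\Ran G_\mu\subset\Dom X_\mu^{1/2}$ and the density of $\Ran G_\mu$ in $\Dom X_\mu^{1/2}$, both of which are contained in Lemma~\ref{lma.c3a}(ii). Two closed quadratic forms agreeing on a common form core are equal, hence $Y=G_\sigma$, which proves (i).

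For part (ii), apply the construction to $\sigma$ in place of $\mu$. By (i) we have $UX_\mu U^*=G_\sigma$ and $U\1_\mu=\1_\sigma$, so the spectral measure of $G_\sigma$ with respect to $\1_\sigma$ equals the spectral measure of $X_\mu$ with respect to $\1_\mu$; the latter is $\mu$ itself, since $\jap{\varphi(X_\mu)\1_\mu,\1_\mu}_\mu=\int_0^\infty\varphi(x)\dd\mu(x)$. By the definition of $\Omega$ this reads $\Omega(\sigma)=\mu$, i.e. $\Omega\circ\Omega=\mathrm{id}$; in particular $\Omega$ is a bijection and an involution. Finally, taking $\varphi\equiv1$ in the defining relation $\jap{\varphi(G_\mu)\1_\mu,\1_\mu}_\mu=\int\varphi\,\dd\sigma$ gives $\sigma(\bbR_+)=\norm{\1_\mu}_\mu^2=\mu(\bbR_+)$, so $\Omega$ preserves total mass. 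I expect the only delicate point to be the form-core verification in the previous paragraph; everything else is bookkeeping built on the already-established Lemma~\ref{lma.c4}.
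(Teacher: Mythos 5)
Your proposal is correct and follows essentially the same route as the paper: the same spectral-theorem unitary $U$ with $UG_\mu U^*=X_\sigma$, $U\1_\mu=\1_\sigma$, the transport of \eqref{c4a} (on $\Ran G_\mu$) combined with \eqref{c3a}/\eqref{c11} for $\sigma$, and the density statements of Lemma~\ref{lma.c3a} to identify $UX_\mu U^*$ with $G_\sigma$, followed by the same computation for part (ii). Your phrasing of the final step as equality of two closed forms on a common form core is just a repackaging of the paper's argument of taking closures and writing $X_\mu=(U^*G_\sigma^{1/2}U)^*(U^*G_\sigma^{1/2}U)$.
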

\begin{proof}
Part (i): consider the operator $U:L^2(\mu)\to L^2(\sigma)$ defined by 
\[
U:L^2(\mu)\to L^2(\sigma), \quad f({G_\mu})\1_\mu\mapsto f
\]
initially on continuous compactly supported functions $f$. 
Since $\1_\mu$ is a cyclic element for $G_\mu$, it is a standard fact (see e.g. \cite[Lemma 1, Section VII.2]{RS1}) that $U$ extends to a unitary operator such that 
\begin{equation}
U{G_\mu}=X_\sigma U \quad \text{ and }\quad U\1_\mu=\1_\sigma.
\label{c6}
\end{equation}
In particular, $U\Ran{G_\mu}=\Ran X_\sigma$. 

Furthermore, by \eqref{c3a} with $\sigma$ in place of $\mu$, we have
\[
\norm{G_\sigma^{1/2}f}_\sigma^2
=
\int_0^\infty\abs{\jap{f,e^{-tX_\sigma}\1_\sigma}_\sigma}^2\dd t
\]
for any $f\in\Dom G_\sigma^{1/2}$. By Lemma~\ref{lma.c2}(i), we have $\Ran X_\sigma\subset\Dom G_\sigma$, and so in particular the above relation holds true for $f\in\Ran X_\sigma$. Using \eqref{c6}, we rewrite the right-hand side as
\[
\int_0^\infty\abs{\jap{e^{-tX_\sigma}f,U\1_\mu}_\sigma}^2\dd t
=
\int_0^\infty\abs{\jap{U^*e^{-tX_\sigma}f,\1_\mu}_\mu}^2\dd t
=
\int_0^\infty\abs{\jap{e^{-t{G_\mu}}U^*f,\1_\mu}_\mu}^2\dd t.
\]
Observe that for $f\in\Ran X_\sigma$, we have $g:=U^*f\in \Ran G_\mu$. 
Recalling \eqref{c4a}, we obtain
\[
\norm{G_\sigma^{1/2}Ug}_\sigma^2
=
\norm{X_\mu^{1/2}g}_\mu^2, \quad g\in\Ran G_\mu, 
\]
or equivalently 
\begin{equation}
\norm{U^*G_\sigma^{1/2}Ug}_\mu^2
=
\norm{X_\mu^{1/2}g}_\mu^2, \quad g\in\Ran G_\mu.
\label{c9}
\end{equation}
By Lemma~\ref{lma.c3a}(ii), $\Ran {G_\mu}$ is dense in $\Dom X_\mu^{1/2}$ in the graph norm of $X_\mu^{1/2}$. 
Likewise, by Lemma~\ref{lma.c3a}(i) $\Ran X_\sigma$ is dense in $\Dom G_\sigma^{1/2}$ in the graph norm of $G_\sigma^{1/2}$. Taking the closure in \eqref{c9}, we conclude that 
\[
\Dom (U^* G_\sigma^{1/2}U)=\Dom X_\mu^{1/2}
\]
and 
\[
\norm{(U^*G_\sigma^{1/2}U)g}_\mu^2
=
\norm{X_\mu^{1/2}g}_\mu^2
\]
for all $g$ in this common domain. It follows that 

\[
X_\mu=(U^*G_\sigma^{1/2}U)^*(U^*G_\sigma^{1/2}U)=U^* G_\sigma U,
\]
and therefore
\[
G_\sigma=U X_\mu U^*.
\]
Together with \eqref{c6}, this proves (i). 

Part (ii): we need to check that the spectral measure of $G_\sigma$ associated with the element $\1_\sigma$ coincides with the measure $\mu$. For continuous $\varphi$ of compact support we have
\[
\jap{\varphi(G_\sigma)\1_\sigma,\1_\sigma}_\sigma
=
\jap{U\varphi(X_\mu)U^*\1_\sigma,\1_\sigma}_\sigma
=
\jap{\varphi(X_\mu)\1_\mu,\1_\mu}_\mu
=
\int_0^\infty \varphi(\lambda)\dd\mu(\lambda),
\]
as required. Finally, since $\norm{\1_\mu}_\mu^2=\mu(\bbR_+)$, we have $\sigma(\bbR_+)=\mu(\bbR_+)$, and so the total mass of the measure is preserved by the map $\Omega$.
\end{proof}

\subsection{The case $\mu\in\calM_\cofinite$}\label{sec.c7}
Assume $\mu\in\calM_\cofinite$. We denote by $\theta_\mu\in L^2(\mu)$ the function 
\[
\theta_\mu(x)=1/x.
\]
\begin{theorem}\label{thm.c4}
Assume $\mu\in\calM_\cofinite$. 
Then the spectrum of $G_\mu$ is simple and $\theta_\mu$ is a cyclic element.
\end{theorem}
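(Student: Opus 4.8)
The statement for co-finite $\mu$ should follow from the finite case by the change of variable $x\mapsto 1/x$ already introduced in the discussion in Section~\ref{sec.aa}. The plan is to exploit the unitary map $\Theta:L^2(\mu)\to L^2(\mu^{\#})$, $(\Theta f)(x)=f(1/x)/x$. First I would verify by a direct computation with the substitution $x\mapsto 1/x$ (and correspondingly $y\mapsto 1/y$) that $\Theta$ is indeed unitary from $L^2(\mu)$ onto $L^2(\mu^{\#})$: this is just the definition of the $\#$-operation on measures. Next I would check the two intertwining identities
\[
\Theta^* G_{\mu^{\#}}\Theta = G_\mu, \qquad \Theta\,\theta_\mu = \1_{\mu^{\#}}.
\]
The second identity is immediate from the definition of $\Theta$ since $\theta_\mu(x)=1/x$ gives $(\Theta\theta_\mu)(x)=(1/(1/x))/x = 1$. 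For the first identity, the cleanest route is to work with the quadratic form \eqref{c11}: one shows $\Theta$ maps $\Dom L_\mu$ onto $\Dom L_{\mu^{\#}}$ and $\norm{G_{\mu^{\#}}^{1/2}\Theta f}_{\mu^{\#}}^2 = \norm{G_\mu^{1/2}f}_\mu^2$, which by the first representation theorem for quadratic forms yields the operator identity. Alternatively one can verify the kernel identity directly on $\calD_\mu$ using Lemma~\ref{lma.c1}: for $f\in\calD_\mu$,
\[
(G_\mu f)(x)=\int_0^\infty \frac{f(y)}{x+y}\dd\mu(y),
\]
and substituting according to $\Theta$ one checks that this transforms into the analogous kernel operator for $\mu^{\#}$; since $\calD_\mu$ is a form core (Theorem~\ref{thm.b1}) this suffices.

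Once the intertwining is established, the conclusion is essentially a transport of structure. Since $\mu\in\calM_\cofinite$, we have $\mu^{\#}\in\calM_\finite$ (the $\#$-operation is a bijection between these classes, as noted before Theorem~\ref{thm.z2a}). Therefore Lemma~\ref{lma.c5} applies to $\mu^{\#}$: the spectrum of $G_{\mu^{\#}}$ is simple with cyclic element $\1_{\mu^{\#}}$. Conjugating by the unitary $\Theta$ and using $\Theta\theta_\mu=\1_{\mu^{\#}}$, we conclude that the spectrum of $G_\mu = \Theta^* G_{\mu^{\#}}\Theta$ is simple and $\theta_\mu$ is a cyclic element for it. Concretely, for any bounded continuous $\varphi$ we have $\varphi(G_\mu)\theta_\mu = \Theta^*\varphi(G_{\mu^{\#}})\Theta\theta_\mu = \Theta^*\varphi(G_{\mu^{\#}})\1_{\mu^{\#}}$, and since $\{\varphi(G_{\mu^{\#}})\1_{\mu^{\#}}\}$ is dense in $L^2(\mu^{\#})$ and $\Theta^*$ is unitary onto $L^2(\mu)$, the orbit $\{\varphi(G_\mu)\theta_\mu\}$ is dense in $L^2(\mu)$.

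\textbf{Main obstacle.} The only genuinely technical point is the verification of the operator identity $\Theta^* G_{\mu^{\#}}\Theta = G_\mu$ at the level of domains, rather than merely on the form core $\calD_\mu$ or on the formal kernel level --- one must be careful because $G_\mu$ may be unbounded and one needs the domains to match exactly, not just the action on a dense set. The clean way around this is to phrase everything through the closed quadratic form \eqref{c11}: $\Theta$ restricts to a unitary between the form domains $\Dom L_\mu = \Dom G_\mu^{1/2}$ and $\Dom L_{\mu^{\#}} = \Dom G_{\mu^{\#}}^{1/2}$ that intertwines the two forms, and two closed semibounded forms that agree under a unitary define unitarily equivalent self-adjoint operators. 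Given the machinery already in place (Theorem~\ref{thm.b1}, Lemma~\ref{lma.c1}, Lemma~\ref{lma.c5}), this is routine, so the proof is short.
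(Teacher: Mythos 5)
Your proposal is correct and follows essentially the same route as the paper: the paper also introduces the unitary $\Theta:L^2(\mu)\to L^2(\mu^{\#})$, verifies $\Theta^*G_{\mu^{\#}}\Theta=G_\mu$ and $\Theta\theta_\mu=\1_{\mu^{\#}}$ by the change of variable $x\mapsto 1/x$, and then invokes Lemma~\ref{lma.c5} for the finite measure $\mu^{\#}$ to transport cyclicity of $\1_{\mu^{\#}}$ back to $\theta_\mu$. Your extra care about establishing the intertwining at the level of the closed quadratic form \eqref{c11} (rather than only on a dense set) is a reasonable way to make precise the "direct calculation" the paper states somewhat informally, but it is not a different method.
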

\begin{proof}
Let  $\mu\in\calM_\cofinite$. Consider the map $\Theta$ 
\[
\Theta:L^2(\mu)\to L^2(\mu^{\#}), \quad (\Theta f)(x)=f(1/x)/x.
\]
We have 
\[
\int_0^\infty \abs{(\Theta f)(x)}^2\dd\mu^{\#}(x)
=
\int_0^\infty \abs{f(1/x)}^2\frac{\dd\mu^{\#}(x)}{x^2}
=
\int_0^\infty \abs{f(x)}^2\dd\mu(x),
\]
by the change of variable $x\mapsto1/x$, and so $\Theta$ is unitary. 
By a direct calculation with the same change of variable, we have
\[
\jap{G_{\mu^{\#}}\Theta f,\Theta g}_{\mu^{\#}}
=
\jap{G_\mu f,g}_\mu, \quad \forall f,g\in L^2(\mu),
\]
and so 
\begin{equation}
\Theta^*G_{\mu^{\#}}\Theta =G_\mu.
\label{c2a}
\end{equation}
Furthermore, we clearly have 
\begin{equation}
\Theta\theta_\mu=\1_{\mu^{\#}}.
\label{c3aa}
\end{equation}
By Lemma~\ref{lma.c5},  $\1_{\mu^{\#}}$ is a cyclic element for $G_{\mu^{\#}}$. Thus $\theta_\mu$ is a cyclic element for $G_\mu$. 
\end{proof}

Let us define the spectral measure $\rho$ corresponding to $\theta_\mu$:
\[
\jap{\varphi(G_\mu)\theta_\mu,\theta_\mu}_\mu
=
\int_0^\infty \varphi(\lambda)\dd\rho(\lambda). 
\]
Clearly, $\rho\in\calM_\finite$. Thus we have the spectral map
\[
\Omega^{\#}: \calM_\cofinite\to\calM_\finite, \quad \mu\mapsto\rho.
\]
\begin{theorem}\label{thm.c5}
We have 
\[
\Omega^{\#}=\Omega\circ\#
\]
on $\calM_\cofinite$. In particular, $\Omega^{\#}$ is a bijection between $\calM_\cofinite$ and $\calM_\finite$ and the inverse is given by 
\[
(\Omega^{\#})^{-1}=\#\circ\Omega.
\]
\end{theorem}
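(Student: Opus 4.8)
The plan is to deduce this statement directly from Theorem~\ref{thm.c4} and Theorem~\ref{thm.c7}; no new analysis is required beyond bookkeeping with the unitary $\Theta$ introduced in the proof of Theorem~\ref{thm.c4}. The point is that the change of variable $x\mapsto 1/x$ converts the ``co-finite'' spectral datum $(G_\mu,\theta_\mu)$ into the ``finite'' spectral datum $(G_{\mu^{\#}},\1_{\mu^{\#}})$, for which $\Omega$ has already been analysed in Section~\ref{sec.c6}.

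First I would recall from the proof of Theorem~\ref{thm.c4} the unitary $\Theta:L^2(\mu)\to L^2(\mu^{\#})$ together with the relations $\Theta^*G_{\mu^{\#}}\Theta=G_\mu$ and $\Theta\theta_\mu=\1_{\mu^{\#}}$; note also that $\theta_\mu$ indeed lies in $L^2(\mu)$ precisely because $\mu\in\calM_\cofinite$, since $\norm{\theta_\mu}_\mu^2=\int_0^\infty x^{-2}\dd\mu(x)<\infty$. From the intertwining relation we get $\varphi(G_\mu)=\Theta^*\varphi(G_{\mu^{\#}})\Theta$ for every bounded continuous $\varphi$, and therefore
\[
\jap{\varphi(G_\mu)\theta_\mu,\theta_\mu}_\mu
=\jap{\varphi(G_{\mu^{\#}})\Theta\theta_\mu,\Theta\theta_\mu}_{\mu^{\#}}
=\jap{\varphi(G_{\mu^{\#}})\1_{\mu^{\#}},\1_{\mu^{\#}}}_{\mu^{\#}}.
\]
The left-hand side equals $\int_0^\infty\varphi\,\dd\rho$ with $\rho=\Omega^{\#}(\mu)$ by the definition of $\Omega^{\#}$, while the right-hand side equals $\int_0^\infty\varphi\,\dd\sigma$ with $\sigma=\Omega(\mu^{\#})$ by the definition of $\Omega$ in Section~\ref{sec.c6} applied to the finite measure $\mu^{\#}$. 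Since these agree for all bounded continuous $\varphi$, the finite measures $\rho$ and $\sigma$ coincide; as $\mu\in\calM_\cofinite$ was arbitrary, this proves $\Omega^{\#}=\Omega\circ\#$.

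For the remaining assertions I would argue as follows. The map $\#$ is a bijection $\calM_\cofinite\to\calM_\finite$, and as an involution on locally finite positive measures on $\bbR_+$ it satisfies $\#\circ\#=\mathrm{id}$; by Theorem~\ref{thm.c7}(ii) the map $\Omega$ is a bijection — indeed an involution — on $\calM_\finite$. Hence $\Omega\circ\#$ is a bijection of $\calM_\cofinite$ onto $\calM_\finite$, with inverse $(\Omega\circ\#)^{-1}=\#^{-1}\circ\Omega^{-1}=\#\circ\Omega$. There is no genuine obstacle here: all the substance is already contained in Theorem~\ref{thm.c4} (the unitary reduction via $\Theta$) and Theorem~\ref{thm.c7} (the involutivity of $\Omega$), and the only thing to double-check carefully is that the definition of $\Omega$ from Section~\ref{sec.c6} is being applied verbatim to $\mu^{\#}$ rather than to $\mu$, which it is.
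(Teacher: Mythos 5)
Your proposal is correct and follows essentially the same route as the paper: the paper's proof of Theorem~\ref{thm.c5} uses exactly the relations \eqref{c2a} and \eqref{c3aa} (the unitary $\Theta$ from Theorem~\ref{thm.c4}) to identify $\jap{\varphi(G_\mu)\theta_\mu,\theta_\mu}_\mu$ with $\jap{\varphi(G_{\mu^{\#}})\1_{\mu^{\#}},\1_{\mu^{\#}}}_{\mu^{\#}}$ and conclude $\rho=\sigma$. Your additional bookkeeping for the bijection and the inverse via the involutivity of $\#$ and $\Omega$ (Theorem~\ref{thm.c7}(ii)) is exactly what the paper leaves implicit.
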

\begin{proof}
Using \eqref{c2a} and \eqref{c3aa}, for all bounded continuous test functions $\varphi$ we find
\[
\jap{\varphi(G_\mu)\theta_\mu,\theta_\mu}_\mu
=
\jap{\varphi(G_{\mu^{\#}})\1_{\mu^{\#}},\1_{\mu^{\#}}}_{\mu^{\#}},
\]
and so 
\[
\int_0^\infty \varphi(\lambda)\dd\rho(\lambda)=
\int_0^\infty \varphi(\lambda)\dd\sigma(\lambda), 
\]
where $\rho=\Omega^{\#}(\mu)$ and $\sigma=\Omega(\mu^{\#})$. Thus, we get 
$\sigma=\rho$ and so $\Omega^{\#}=\Omega\circ\#$. 
\end{proof}

\section{Hankel operators in $L^2(\bbR_+)$}
\label{sec.a}

Throughout this section, the measure $\mu$ is in $\calM$ and $\Gamma_\mu$ is the Hankel operator from Definition~\ref{def:1}.

\subsection{The case $\mu\in\calM_\finite$}\label{s:cv}

Since $G_\mu = L_\mu^* L_\mu$, we can write the polar decomposition of $L_\mu$ as 
\begin{align}\label{a7-a}
L_\mu = V G_\mu^{1/2}, 
\end{align}
where $V:L^2(\mu)\to L^2$ is an isometry (recall that  $\Ker G_\mu=\{0\}$). Since $\Gamma_\mu 
=L_\mu L_\mu^*$, we can write 
\begin{align}\label{a7}
\Gamma_\mu = VG_\mu^{1/2}G_\mu^{1/2}V^*=V G_\mu V^*, 
\end{align}
and we conclude that $\Ker V^*=\Ker \Gamma_\mu$ and
\begin{align*}
\Ran V = (\Ker \Gamma_\mu)^\perp = \overline{ \Ran \Gamma_\mu} .
\end{align*}
Considering $V$ as a unitary operator from $L^2(\mu)$ to $(\Ker \Gamma_\mu)^\perp$ we conclude from 
\eqref{a7} that 
\[
\Gamma_\mu^\perp
\quad\text{ and }\quad
G_\mu 
\]
are unitary equivalent. Continuing this line of reasoning, we obtain the following lemma.

\begin{lemma}\label{l:cyc02}
Assume $\mu\in\calM_\finite$. 
The spectrum of $\Gamma_\mu^\perp$ is simple and 
$v_\mu:=V\1_\mu$ is a cyclic vector with $\|v_\mu\|^2=\mu(\bbR_+)$. 
The spectral measure of $\Gamma_\mu^\perp$ with respect to $v_\mu$ coincides with the spectral measure of $G_\mu$ with respect to $\1_\mu$. 
The element $v_\mu$ satisfies the identity \eqref{z12} for all $g\in\calD$. 
\end{lemma}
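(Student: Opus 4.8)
The plan is to assemble Lemma~\ref{l:cyc02} from the polar decomposition \eqref{a7-a}--\eqref{a7} together with the spectral analysis of $G_\mu$ carried out in Section~\ref{sec.c}. Most of the work is already packaged: from \eqref{a7} we know that $V$, regarded as a unitary operator $L^2(\mu)\to(\Ker\Gamma_\mu)^\perp=\overline{\Ran\Gamma_\mu}$, intertwines $G_\mu$ with $\Gamma_\mu^\perp$. By Lemma~\ref{lma.c5}, the spectrum of $G_\mu$ is simple with cyclic element $\1_\mu$; transporting this through the unitary $V$ immediately gives that the spectrum of $\Gamma_\mu^\perp$ is simple and that $v_\mu:=V\1_\mu$ is cyclic. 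Since $V$ is an isometry, $\|v_\mu\|^2=\|\1_\mu\|_\mu^2=\mu(\bbR_+)$, which is \eqref{z9}. The equality of spectral measures is also formal: for bounded continuous $\varphi$,
\[
\jap{\varphi(\Gamma_\mu^\perp)v_\mu,v_\mu}
=\jap{\varphi(VG_\mu V^*)V\1_\mu,V\1_\mu}
=\jap{V\varphi(G_\mu)\1_\mu,V\1_\mu}
=\jap{\varphi(G_\mu)\1_\mu,\1_\mu}_\mu,
\]
so the spectral measure of $\Gamma_\mu^\perp$ with respect to $v_\mu$ is exactly $\sigma$ as defined in Section~\ref{sec.c6}.

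The one genuinely computational point — and the step I expect to be the main obstacle — is verifying the weak identity \eqref{z12}, namely
\[
\jap{v_\mu,\Gamma_\mu^{1/2}g}=\int_0^\infty h_\mu(t)\overline{g(t)}\dd t
\qquad\text{for all }g\in C^\infty_\comp(\bbR_+).
\]
Here one must unravel the polar decomposition carefully. From \eqref{a7-a}, $\Gamma_\mu=L_\mu L_\mu^*$, so $\Gamma_\mu^{1/2}=VG_\mu^{1/2}$ as well in the appropriate sense; more usefully, one shows $\Gamma_\mu^{1/2}g$ relates to $L_\mu^* g$ and to the Laplace transform $L=L_\mu^*$. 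The plan is: first note $g\in C^\infty_\comp(\bbR_+)\subset\Dom L$ (Theorem~\ref{thm.b1}), with $Lg=\calL g\in L^2(\mu)$. Then, since $\Gamma_\mu^{1/2}=V G_\mu^{1/2}$ and $L_\mu=VG_\mu^{1/2}$, one gets $V^*\Gamma_\mu^{1/2}g = G_\mu^{1/2}V^*V\cdots$ — the cleaner route is to use $\langle v_\mu,\Gamma_\mu^{1/2}g\rangle=\langle V\1_\mu,\Gamma_\mu^{1/2}g\rangle=\langle \1_\mu, V^*\Gamma_\mu^{1/2}g\rangle_\mu$ and identify $V^*\Gamma_\mu^{1/2}g=G_\mu^{1/2}Lg$ via \eqref{a7-a} (using $L_\mu^*=L$ and $\Gamma_\mu^{1/2}=VG_\mu^{1/2}=L_\mu G_\mu^{-1/2}$ on suitable vectors). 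This then equals $\langle G_\mu^{1/2}\1_\mu, Lg\rangle_\mu$, and since $G_\mu^{1/2}\1_\mu$ is captured by $\calL_\mu\1_\mu(t)=\int_0^\infty e^{-tx}\dd\mu(x)=h_\mu(t)$ through \eqref{c11}/\eqref{c3a}, one arrives at $\int_0^\infty h_\mu(t)\overline{g(t)}\dd t$.

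A safer and more transparent way to organize this last step, avoiding fractional powers of unbounded operators, is to work directly with the quadratic form: for $g\in C^\infty_\comp(\bbR_+)$,
\[
\jap{v_\mu,\Gamma_\mu^{1/2}g}
=\jap{\1_\mu, V^*\Gamma_\mu^{1/2}g}_\mu
=\jap{G_\mu^{1/2}\1_\mu,\, G_\mu^{1/2}(V^* V)\,\cdots}
\]
— more concretely, use that $\Gamma_\mu^{1/2}g = VG_\mu^{1/2}\,V^*\,\Gamma_\mu^{1/2}g$ is awkward; instead, recall $\calL_\mu(\calL g)$ pointwise equals $h_\mu * $-type convolution, and combine $L_\mu^*=L$, $L^*=L_\mu$ with $\langle L_\mu\1_\mu, Lg\rangle = \langle \1_\mu, L^*Lg\rangle_\mu$ wherever the domains permit, reducing everything to Fubini:
\[
\jap{v_\mu,\Gamma_\mu^{1/2}g}
=\int_0^\infty\!\!\int_0^\infty e^{-tx}\dd\mu(x)\,\overline{g(t)}\,\dd t
=\int_0^\infty h_\mu(t)\overline{g(t)}\dd t,
\]
the interchange being justified by finiteness of $\mu$ and compact support of $g$ away from $0$ (so $e^{-tx}|g(t)|$ is integrable against $\dd\mu\,\dd t$). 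Uniqueness of $v_\mu$ in $\overline{\Ran\Gamma_\mu}$ then follows since $\{\Gamma_\mu^{1/2}g: g\in C^\infty_\comp(\bbR_+)\}$ is dense in $\overline{\Ran\Gamma_\mu^{1/2}}=\overline{\Ran\Gamma_\mu}$ by Proposition~\ref{thm.z1}. The main care needed is the precise domain bookkeeping linking $V$, $G_\mu^{1/2}$, $L_\mu$ and $\Gamma_\mu^{1/2}$; everything else is a transcription of Section~\ref{sec.c}.
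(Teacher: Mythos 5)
The first half of your proposal (simplicity, cyclicity of $v_\mu=V\1_\mu$, the norm identity, and the equality of spectral measures via the unitary equivalence \eqref{a7}) is correct and is exactly the paper's argument. The problem is the verification of \eqref{z12}, which is the only genuinely computational step, and there your argument does not go through as written. The operator identities you lean on are wrong: $\Gamma_\mu^{1/2}\neq VG_\mu^{1/2}$ (that product is $L_\mu$, a map from $L^2(\mu)$ to $L^2$, not an operator on $L^2$; the correct statement is $\Gamma_\mu^{1/2}=VG_\mu^{1/2}V^*$), and consequently the claimed identification $V^*\Gamma_\mu^{1/2}g=G_\mu^{1/2}Lg$ is false — one has instead $V^*\Gamma_\mu^{1/2}g=G_\mu^{1/2}V^*g=Lg$, because $G_\mu^{1/2}V^*=(VG_\mu^{1/2})^*=L_\mu^*=L$. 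Your subsequent manipulations involving $G_\mu^{1/2}\1_\mu$ or $L_\mu\1_\mu$ (and the pairing $\jap{L_\mu\1_\mu,Lg}$, which does not even live in one space) implicitly require $\1_\mu\in\Dom G_\mu^{1/2}=\Dom L_\mu$, i.e.\ $h_\mu\in L^2(\bbR_+)$; this is \emph{not} guaranteed for $\mu\in\calM_\finite$, since $h_\mu$ is bounded but may decay arbitrarily slowly. Finally, your ``safer'' route simply asserts the key equality $\jap{v_\mu,\Gamma_\mu^{1/2}g}=\int_0^\infty\int_0^\infty e^{-tx}\,\dd\mu(x)\,\overline{g(t)}\,\dd t$ without deriving it; but that bridge is precisely the content of \eqref{z12}, and it cannot be reduced to ``Fubini'' alone.

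The repair is short and is what the paper does: since $\Gamma_\mu^{1/2}=VG_\mu^{1/2}V^*$, for $g\in C^\infty_\comp(\bbR_+)\subset\Dom\Gamma_\mu^{1/2}$ one writes $\jap{v_\mu,\Gamma_\mu^{1/2}g}=\jap{V^*v_\mu,G_\mu^{1/2}V^*g}_\mu=\jap{\1_\mu,Lg}_\mu$, using $V^*V=I$ on $L^2(\mu)$ (so $V^*v_\mu=\1_\mu$) and $G_\mu^{1/2}V^*=L_\mu^*=L$ from the polar decomposition \eqref{a7-a} and Theorem~\ref{thm.b1}. Only at this point does Fubini enter, and there your justification (finiteness of $\mu$, compact support of $g$) is fine, as is your density argument for uniqueness. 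So the gap is not the overall strategy but the missing correct link between $\Gamma_\mu^{1/2}$, $V$ and $L$; as written, the chain of identities you propose is invalid and in places relies on a domain assumption ($h_\mu\in L^2$) that fails for general finite measures.
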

\begin{proof}
By Lemma~\ref{lma.c5} the vector  $\1_\mu$ is a cyclic vector for 
$G_\mu$, so by the unitary equivalence \eqref{a7} the vector $V\1_\mu$ is  cyclic  for $\Gamma_\mu^\perp$. Since $V$ is an isometry, we have $\|v_\mu\|^2=\|\1_\mu\|_\mu^2 =\mu(\bbR_+)$. For any bounded continuous function $\varphi$ we have 
\begin{align*}
\jap{\varphi(\Gamma_\mu)v_\mu, v_\mu} 
=\jap{V\varphi(G_\mu) V^* V \1_\mu, V \1_\mu } 
=\jap{\varphi(G_\mu)\1_\mu, \1_\mu}_\mu, 
\end{align*}
and so the corresponding spectral measures coincide. 

Let us prove \eqref{z12}. By \eqref{a7}, we have the identity 
\[
\Gamma_\mu^{1/2} = V G_\mu^{1/2} V^*. 
\]
Using this, we can write 
\begin{align*}
\jap{v_\mu, \Gamma_\mu^{1/2}g} & =  \jap{v_\mu, V G_\mu^{1/2} V^* g} 
=  \jap{V^* v_\mu,  G_\mu^{1/2} V^* g}_\mu   \\
&=  \jap{V^* v_\mu,  L g}_\mu =  \jap{\1_\mu,  L g}_\mu  \\
&=  \int_0^\infty \left(\int_{0}^\infty\overline{g(s)} e^{-st} \dd s\right) \dd\mu(t) \\
&=  \int_{0}^\infty \overline{g(s)} \left(\int_{0}^\infty e^{-st} \dd \mu(t)\right) \dd s 
=\int_0^\infty h_\mu(s)\overline{g(s)}\dd s,
\end{align*}
as required. 
\end{proof}
In particular, this proves Theorems~\ref{prp.z1} (in the case $\mu\in\calM_\finite$) and~\ref{prp.z2}(i).

\begin{proof}[Concluding the proof of Theorem~\ref{thm.z2}]
The main Theorem~\ref{thm.z2} follows directly from Theorem~\ref{thm.c7}(ii) and Lemma~\ref{l:cyc02}.
\end{proof}

\subsection{The case $\mu\in\calM_\cofinite$}
Assume $\mu\in\calM_\cofinite$. 
Again, we have the polar decomposition \eqref{a7-a} and the unitary equivalence \eqref{a7}. 
Let us define
\[
w_\mu=V\theta_\mu.
\]
Since $V$ is an isometry, the norms of $w_\mu$ and $\theta_\mu$ coincide. 
Since $\theta_\mu$ is a cyclic element for $G_\mu$, it follows that $w_\mu$ is a cyclic element for $\Gamma_\mu^\perp$. Moreover, we have
\[
\jap{\varphi(\Gamma_\mu)w_\mu,w_\mu}
=
\jap{\varphi(G_\mu)\theta_\mu,\theta_\mu}_\mu,
\]
and so the spectral measure of $\Gamma_\mu^\perp$ with respect to the element $w_\mu$ coincides with the spectral measure $\rho$ of $G_\mu$ with respect to the element $\theta_\mu$.

\begin{lemma}\label{l:cyc03}
Assume $\mu\in\calM_\cofinite$. 
The spectrum of $\Gamma_\mu^\perp$ is simple and 
$w_\mu:=V\theta_\mu$ is a cyclic vector with 
\[
\|w_\mu\|^2=\int_0^\infty\frac{\dd\mu(x)}{x^2}.
\]
The spectral measure of $\Gamma_\mu^\perp$ with respect to $w_\mu$ coincides with the spectral measure of $G_\mu$ with respect to $\theta_\mu$. 
The element $w_\mu$ satisfies the identity \eqref{z12a} for all $g\in C^\infty_{\comp}(\bbR_+)$. 
\end{lemma}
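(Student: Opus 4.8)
The plan is to mirror the proof of Lemma~\ref{l:cyc02}, replacing the cyclic element $\1_\mu$ by $\theta_\mu$ and using the co-finite analogue of the norm identity together with Theorem~\ref{thm.c4}. First I would recall the polar decomposition \eqref{a7-a} and the unitary equivalence \eqref{a7}, viewing $V$ as a unitary operator from $L^2(\mu)$ onto $(\Ker\Gamma_\mu)^\perp=\overline{\Ran\Gamma_\mu}$; this is exactly as in the finite case and does not use finiteness of $\mu$, only $\Ker G_\mu=\{0\}$ (Theorem~\ref{thm.b1}). By Theorem~\ref{thm.c4}, $\theta_\mu$ is cyclic for $G_\mu$ and the spectrum of $G_\mu$ is simple; transporting through the unitary equivalence, $w_\mu:=V\theta_\mu$ is cyclic for $\Gamma_\mu^\perp$ and the spectrum of $\Gamma_\mu^\perp$ is simple. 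The norm identity is immediate: since $V$ is an isometry, $\|w_\mu\|^2=\|\theta_\mu\|_\mu^2=\int_0^\infty x^{-2}\,\dd\mu(x)$, which is finite precisely because $\mu\in\calM_\cofinite$, giving \eqref{z9a}. The equality of spectral measures follows from the computation $\jap{\varphi(\Gamma_\mu)w_\mu,w_\mu}=\jap{V\varphi(G_\mu)V^*V\theta_\mu,V\theta_\mu}=\jap{\varphi(G_\mu)\theta_\mu,\theta_\mu}_\mu$ for bounded continuous $\varphi$, exactly as in Lemma~\ref{l:cyc02}.

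It remains to verify the weak identity \eqref{z12a}. The plan is to repeat the chain of equalities from the proof of Lemma~\ref{l:cyc02}: using $\Gamma_\mu^{1/2}=VG_\mu^{1/2}V^*$ and $L_\mu=VG_\mu^{1/2}$ (so that $G_\mu^{1/2}V^*=L=L_\mu^*$ on the appropriate domain), for $g\in C^\infty_\comp(\bbR_+)$ one writes
\[
\jap{w_\mu,\Gamma_\mu^{1/2}g}
=\jap{V\theta_\mu, VG_\mu^{1/2}V^*g}
=\jap{\theta_\mu, G_\mu^{1/2}V^*g}_\mu
=\jap{\theta_\mu, Lg}_\mu.
\]
Then $\jap{\theta_\mu,Lg}_\mu=\int_0^\infty x^{-1}\overline{(\calL g)(x)}\,\dd\mu(x)$. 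The point is to recognize this as the claimed double integral. Writing $x^{-1}=\int_0^\infty e^{-sx}\,\dd s$ and $(\calL g)(x)=\int_0^\infty e^{-tx}g(t)\,\dd t$, and using Fubini, one gets
\[
\int_0^\infty\int_0^\infty\int_0^\infty e^{-(s+t)x}\overline{g(t)}\,\dd s\,\dd t\,\dd\mu(x)
=\int_0^\infty\int_0^\infty h_\mu(s+t)\overline{g(t)}\,\dd s\,\dd t,
\]
which is the right-hand side of \eqref{z12a}. The absolute convergence needed to apply Fubini comes from \eqref{eq:3}: since $g$ is compactly supported in $\bbR_+$, say $\supp g\subset[t_0,\infty)$ with $t_0>0$, the bound $h_\mu(s+t)\le C t_0^{-2}\int_0^\infty x^{-2}\,\dd\mu(x)$ when combined with... actually one should be slightly more careful and keep $h_\mu(s+t)\le C(s+t)^{-2}\int x^{-2}\dd\mu$, which is integrable in $s$ over $(0,\infty)$ for each fixed $t\ge t_0$, and then integrable in $t$ over the compact support of $g$.

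The main obstacle I expect is purely the domain bookkeeping in the identity $G_\mu^{1/2}V^*g = Lg$ for $g\in C^\infty_\comp(\bbR_+)$, i.e.~justifying that $V^*g$ lies in $\Dom G_\mu^{1/2}$ and that applying $G_\mu^{1/2}$ returns $Lg$. This is the same technical point that appears in the finite case in Lemma~\ref{l:cyc02}, where it was handled via $L_\mu=VG_\mu^{1/2}$ and duality; the argument transfers verbatim since it does not use finiteness of $\mu$, only the polar decomposition \eqref{a7-a}, the identity $L_\mu^*=L$ from Theorem~\ref{thm.b1}, and $g\in C^\infty_\comp(\bbR_+)\subset\Dom L$. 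Everything else — cyclicity, simplicity, the norm formula, and the equality of spectral measures — is a direct transport through the isometry $V$ of the corresponding facts about $G_\mu$ established in Theorem~\ref{thm.c4}, so there is no genuinely new difficulty beyond what was already overcome for $v_\mu$.
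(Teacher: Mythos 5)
Your proposal is correct and follows essentially the same route as the paper: transport cyclicity, the norm, and the spectral measure from $(G_\mu,\theta_\mu)$ via the polar-decomposition isometry $V$, then verify \eqref{z12a} through $\jap{\theta_\mu,Lg}_\mu$ using $G_\mu^{1/2}V^*=L_\mu^*=L$ and Fubini with $x^{-1}=\int_0^\infty e^{-sx}\,\dd s$. The paper's proof is exactly this computation, so there is nothing to add.
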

\begin{proof}
By \eqref{a7}, we have 
\[
\Gamma_\mu^{1/2}=VG_\mu^{1/2}V^*.
\]
Using this, for any $g\in C^\infty_{\comp}(\bbR_+)$ we find
\begin{align*}
\jap{w_\mu,\Gamma_\mu^{1/2}g}
&=
\jap{V\theta_\mu,VG_\mu^{1/2}V^*g}
=
\jap{\theta_\mu,G_\mu^{1/2}V^*g}_\mu
=\jap{\theta_\mu,L g}_\mu
\\
&=\int_0^\infty \left\{\int_0^\infty \overline{g(t)}e^{-tx}\dd t\right\}\frac{\dd\mu(x)}{x}
=\int_0^\infty \left\{\int_0^\infty \frac{e^{-tx}}{x}\dd \mu(x)\right\}\overline{g(t)}\dd t.
\end{align*}
Finally, it is easy to see that 
\[
\int_0^\infty \frac{e^{-tx}}{x}\dd \mu(x)=\int_0^\infty h_\mu(t+s)\dd s, \quad t>0,
\]
which completes the proof of  \eqref{z12a}. 
\end{proof}
In particular, this proves Theorems~\ref{prp.z1} (in the case $\mu\in\calM_\cofinite$) and~\ref{prp.z2}(ii).

\begin{proof}[Concluding the proof of Theorem~\ref{thm.z2a}]
Theorem~\ref{thm.z2a} follows directly from Theorem~\ref{thm.c5} and Lemma~\ref{l:cyc03}.
\end{proof}

\section{Proofs of properties of $\Omega$}\label{sec.p}

\begin{proof}[Proof of Theorem~\ref{thm.z6}]
Let $\Omega(\mu)=\sigma$ and $\Omega(\tau\mu)=\sigma'$. We need to check that $\sigma'=\tau\sigma_\tau$. From \eqref{c1} it is easy to see that
\[
G_{\tau\mu}=\tau G_{\mu}
\]
and therefore 
\begin{align*}
\int_0^\infty \varphi(x)\dd\sigma'(x)
&=\jap{\varphi(G_{\tau\mu})\1_{\tau\mu},\1_{\tau\mu}}_{\tau\mu}
=\tau\jap{\varphi(\tau G_{\mu})\1_{\mu},\1_{\mu}}_{\mu}
\\
&=\tau \int_0^\infty \varphi(\tau x)\dd\sigma(x)
=\tau\int_0^\infty \varphi(x)\dd\sigma_{\tau}(x).
\end{align*}
It follows that $\sigma'=\tau\sigma_\tau$ as required. 

Let $\Omega(\mu)=\sigma$ and $\Omega(\mu_\tau)=\sigma'$. We need to check that $\sigma'=\sigma_{1/\tau}$. Consider the unitary map
\[
U_\tau:L^2(\mu)\to L^2(\mu_\tau), \quad
(U_\tau f)(x)=f(x/\tau). 
\]
Clearly, $U_\tau\1_{\mu}=\1_{\mu_\tau}$. 
It is straightforward to check that 
\[
G_{\mu_\tau}U_\tau=\tau^{-1}U_\tau G_\mu,
\]
and so 
\[
G_{\mu_\tau}=\tau^{-1}U_\tau G_\mu U_\tau^*.
\]
From here we find
\begin{align*}
\int_0^\infty \varphi(x)\dd\sigma'(x)
&=\jap{\varphi(G_{\mu_\tau})\1_{\mu_\tau},\1_{\mu_\tau}}_{\mu_\tau}
=\jap{\varphi(\tau^{-1}U_\tau G_\mu U_\tau^*)\1_{\mu_\tau},\1_{\mu_\tau}}_{\mu_\tau}
\\
&=\jap{U_\tau \varphi(\tau^{-1}G_\mu )U_\tau^*\1_{\mu_\tau},\1_{\mu_\tau}}_{\mu_\tau}
=\jap{\varphi(\tau^{-1}G_\mu )U_\tau^*\1_{\mu_\tau},U_\tau^*\1_{\mu_\tau}}_{\mu}
\\
&=\jap{\varphi(\tau^{-1}G_\mu )\1_{\mu},\1_{\mu}}_{\mu}
=\int_0^\infty \varphi(\tau^{-1}x)\dd\sigma(x)
=\int_0^\infty \varphi(x)\dd\sigma_{1/\tau}(x),
\end{align*}
which proves that $\sigma'=\sigma_{1/\tau}$.
\end{proof}

\begin{proof}[Proof of Theorem~\ref{thm.z7}]
We have 
\[
\Tr\Gamma_\mu=\Tr L^*L=\int_0^\infty \int_0^\infty e^{-2tx}\dd\mu(x)\dd t=\frac12\int_0^\infty\frac{\dd\mu(x)}{x},
\]
which proves the equivalence of (i) and (ii) and the first equality in \eqref{z15}. 

Next, we already know that $\Gamma_\mu^\perp$ and $G_\mu$ are unitarily equivalent. By Theorem~\ref{thm.c7}, the operators $G_\mu$ and $X_\sigma$ are unitarily equivalent. Thus, we have 
\[
\Tr\Gamma_\mu=\Tr\Gamma_\mu^\perp=\Tr G_\mu=\Tr X_\sigma.
\]
Clearly, $\Tr X_\sigma$ is finite if and only if $\sigma$ is supported on a summable sequence of points $\{a_k\}_{k=1}^\infty$ and in this case 
\[
\Tr X_\sigma=\sum_{k=1}^\infty a_k.
\]
This proves the equivalence of (i) and (iii) and the second equality in \eqref{z15}. 
\end{proof}

\begin{proof}[Proof of Theorem~\ref{thm.z8}]
This is a minor variation of the proof of the previous theorem. We have
\[
\Tr\Gamma_\mu^2
=\Tr G_\mu^2
=\int_0^\infty \int_0^\infty \frac{\dd\mu(t)\dd\mu(s)}{(t+s)^2},
\]
which proves the equivalence of (i) and (ii) and the first equality in \eqref{z17}. Next, by the same argument as in the proof of the previous theorem,
\[
\Tr\Gamma_\mu^2=\Tr X_\sigma^2=\sum_{k=1}^\infty a_k^2,
\]
which proves the equivalence of (i) and (iii) and the second equality in \eqref{z17}.
\end{proof}

\section{Proof of continuity of $\Omega$}\label{sec.ct}
Here we prove Theorem~\ref{thm.z9}.

\subsection{The strong convergence of $\Gamma_{\mu_n}$}
We recall some facts from \cite{PuTreil}. We denote
\[
C^\infty_{\comp,0}(\bbR_+)
=\{f\in C^\infty_\comp(\bbR_+): {\textstyle \int_0^\infty f(t)\dd t=0}\}
=\{g': g\in C^\infty_\comp(\bbR_+)\}.
\]
By \cite[Theorem~2.4(ii)]{PuTreil}, for any $\mu\in\calM$, the set $C^\infty_{\comp,0}(\bbR_+)$ is dense in $\Dom\Gamma_\mu$ in the graph norm of $\Gamma_\mu$ (i.e. this set is a \emph{core} for $\Gamma_\mu$). Denoting $\dd\nu(x)=x\dd\mu(x)$, for any $g\in C^\infty_\comp(\bbR_+)$ we have by \cite[Lemma~5.1]{PuTreil}
\begin{equation}
\Gamma_\mu g'=\Gamma_\nu g \quad\text{ and }\quad 
(\Gamma_\nu g)(t)=\int_0^\infty h_\nu(t+s)g(s)\dd s. 
\label{eq:ct1}
\end{equation}
\begin{lemma}\label{lma.ct1}
Let $\{\mu_n\}_{n=1}^\infty$, $\mu$ be measures in $\calM$, and suppose that $h_{\mu_n}(t)\to h_\mu(t)$ for every $t>0$ as $n\to\infty$. Then for any $f\in C^\infty_{\comp,0}(\bbR_+)$ we have 
\begin{equation}
\norm{\Gamma_{\mu_n}f-\Gamma_{\mu}f}\to0, \quad n\to\infty.
\label{eq:ct3}
\end{equation}
Moreover, $\Gamma_{\mu_n}\to\Gamma_\mu$ in the sense of strong resolvent convergence. 
\end{lemma}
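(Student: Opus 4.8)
The plan is to reduce the problem, via the identity \eqref{eq:ct1}, to an $L^2$-estimate on a difference of two Laplace transforms on a half-line. Write $f=g'$ with $g\in C^\infty_\comp(\bbR_+)$, say $\supp g\subset[a,b]$ with $0<a<b<\infty$, and put $\dd\nu_n(x)=x\,\dd\mu_n(x)$ and $\dd\nu(x)=x\,\dd\mu(x)$. By \eqref{eq:ct1} we have $\Gamma_{\mu_n}f=\Gamma_{\nu_n}g$ with $(\Gamma_{\nu_n}g)(t)=\int_0^\infty h_{\nu_n}(t+s)g(s)\,\dd s$, and likewise for $\mu$. Subtracting, applying the Cauchy--Schwarz inequality in $s$, then Tonelli and the substitution $\tau=t+s$, one obtains
\[
\norm{\Gamma_{\mu_n}f-\Gamma_\mu f}^2=\norm{\Gamma_{\nu_n}g-\Gamma_\nu g}^2\leq(b-a)\norm{g}^2\int_a^\infty\abs{h_{\nu_n}(\tau)-h_\nu(\tau)}^2\,\dd\tau .
\]
So it suffices to prove $h_{\nu_n}\to h_\nu$ in $L^2([a,\infty))$. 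The point of replacing $\mu_n$ by $\nu_n$ is that $h_{\nu_n}=-h_{\mu_n}'$ is nonnegative and nonincreasing, which provides the control at infinity that $h_{\mu_n}$ itself need not have.

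First I would establish the pointwise convergence $h_{\nu_n}(\tau)\to h_\nu(\tau)$ for every $\tau>0$. Each $h_{\mu_n}(z)=\int_0^\infty e^{-zx}\,\dd\mu_n(x)$ extends holomorphically to $\{\Re z>0\}$, and for $\Re z\geq c>0$ one has $\abs{h_{\mu_n}(z)}\leq h_{\mu_n}(\Re z)\leq h_{\mu_n}(c)$; since $h_{\mu_n}(c)\to h_\mu(c)<\infty$, the family $\{h_{\mu_n}\}_n$ is locally bounded on $\{\Re z>0\}$, uniformly in $n$. By Vitali's theorem the pointwise convergence on $(0,\infty)$ upgrades to locally uniform convergence on $\{\Re z>0\}$ (the limit being necessarily $h_\mu$), hence $h_{\mu_n}'\to h_\mu'$ locally uniformly, i.e.\ $h_{\nu_n}=-h_{\mu_n}'\to-h_\mu'=h_\nu$ pointwise on $(0,\infty)$.

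The main obstacle is to upgrade this to convergence in $L^2([a,\infty))$, since no uniform integrable majorant for $\{h_{\nu_n}\}$ is available a priori. The key is the identity, valid by Tonelli,
\[
\int_a^\infty h_{\nu_n}(\tau)\,\dd\tau=\int_0^\infty e^{-ax}\,\dd\mu_n(x)=h_{\mu_n}(a),
\]
together with the hypothesis $h_{\mu_n}(a)\to h_\mu(a)=\int_a^\infty h_\nu(\tau)\,\dd\tau<\infty$. Combined with $h_{\nu_n}\geq0$ and the pointwise convergence $h_{\nu_n}\to h_\nu$, Scheff\'e's lemma gives $\int_a^\infty\abs{h_{\nu_n}(\tau)-h_\nu(\tau)}\,\dd\tau\to0$. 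Finally, monotonicity gives $0\leq h_{\nu_n}(\tau)\leq h_{\nu_n}(a)$ for $\tau\geq a$, and $h_{\nu_n}(a)\to h_\nu(a)$, so the $h_{\nu_n}(a)$ are bounded uniformly in $n$; consequently
\[
\int_a^\infty\abs{h_{\nu_n}(\tau)-h_\nu(\tau)}^2\,\dd\tau\leq\Bigl(\sup_n h_{\nu_n}(a)+h_\nu(a)\Bigr)\int_a^\infty\abs{h_{\nu_n}(\tau)-h_\nu(\tau)}\,\dd\tau\longrightarrow0 ,
\]
which proves \eqref{eq:ct3}.

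For the last assertion I would use the standard core criterion for strong resolvent convergence (see e.g.\ \cite[Theorem~VIII.25(a)]{RS1}). By \cite[Theorem~2.4(ii)]{PuTreil}, $C^\infty_{\comp,0}(\bbR_+)$ is contained in $\Dom\Gamma_{\mu_n}$ for every $n$ and is a core for $\Gamma_\mu$; since \eqref{eq:ct3} shows $\Gamma_{\mu_n}f\to\Gamma_\mu f$ for every $f$ in this common core and all the operators are self-adjoint, it follows that $\Gamma_{\mu_n}\to\Gamma_\mu$ in the strong resolvent sense.
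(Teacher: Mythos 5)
Your proposal is correct, and it follows the same overall skeleton as the paper (reduction via \eqref{eq:ct1} to the measures $\dd\nu_n=x\,\dd\mu_n$, pointwise convergence of the kernels $h_{\nu_n}$, then the core criterion from \cite[Theorem VIII.25(a)]{RS1}), but the mechanism in the key convergence step is genuinely different. The paper establishes a uniform-in-$n$ pointwise majorant $h_{\nu_n}(t)\leq C_\alpha/t$ for $t\geq\alpha$ (using $th_{\nu_n}(t)\leq 2h_{\mu_n}(\alpha/2)$ and the boundedness of $h_{\mu_n}(\alpha/2)$), deduces the $n$-independent $L^2$ bound $\abs{(\Gamma_{\nu_n}g)(t)}\leq C_g/(t+\alpha)$, and concludes by dominated convergence applied to the functions $\Gamma_{\nu_n}g$ themselves. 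You instead prove the stronger intermediate statement that $h_{\nu_n}\to h_\nu$ in $L^2([a,\infty))$: the exact identity $\int_a^\infty h_{\nu_n}=h_{\mu_n}(a)\to h_\mu(a)$ together with positivity and pointwise convergence gives $L^1$ convergence by Scheff\'e, and the monotonicity bound $h_{\nu_n}(\tau)\leq h_{\nu_n}(a)$ upgrades it to $L^2$; Cauchy--Schwarz then yields the quantitative estimate $\norm{\Gamma_{\mu_n}f-\Gamma_\mu f}^2\leq(b-a)\norm{g}^2\norm{h_{\nu_n}-h_\nu}_{L^2(a,\infty)}^2$, which the paper does not provide. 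You also make explicit, via Vitali's theorem for the holomorphic extensions of $h_{\mu_n}$, the step $h_{\nu_n}=-h_{\mu_n}'\to -h_\mu'=h_\nu$, which the paper only asserts as easy (it can alternatively be obtained from the convexity of $h_{\mu_n}$, avoiding complex analysis). Both routes are sound; the paper's is slightly more elementary, while yours isolates a cleaner quantitative lemma on convergence of the kernels. Your final step is also fine: the resolvent argument only needs $C^\infty_{\comp,0}(\bbR_+)$ to be a core for $\Gamma_\mu$ and contained in each $\Dom\Gamma_{\mu_n}$, both of which are guaranteed by the cited result of \cite{PuTreil}.
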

\begin{proof}
Let us prove \eqref{eq:ct3}. By \eqref{eq:ct1}, this is equivalent to proving that 
\begin{equation}
\norm{\Gamma_{\nu_n}g-\Gamma_{\nu}g}\to0, \quad n\to\infty,
\label{eq:ct2}
\end{equation}
for any $g\in C^\infty_{\comp}(\bbR_+)$, where $\dd\nu(x)=x\dd\mu(x)$ and $\dd\nu_n(x)=x\dd\mu_n(x)$. We will prove this by using dominated convergence and the explicit integral of \eqref{eq:ct1}. Assume $\supp g\subset[\alpha,\beta]$ with some $0<\alpha<\beta<\infty$. We first need an elementary bound on the kernel $h_\nu$. For $t\geq\alpha$ we have 
\begin{align*}
th_\nu(t)=\int_0^\infty e^{-tx}tx\, \dd\mu(x)
=
\int_0^\infty e^{-tx/2}e^{-tx/2}tx\, \dd\mu(x)
\leq 
\int_0^\infty e^{-\alpha x/2}e^{-tx/2}tx\, \dd\mu(x).
\end{align*}
Using that $te^{-t/2}\leq 2$ for $t\geq0$, from here we find 
\[
h_\nu(t)\leq \frac{2}{t}\int_0^\infty e^{-\alpha x/2}\dd\mu(x)=\frac{2h_\mu(\alpha/2)}{t}.
\]
Since $h_{\mu_n}(\alpha/2)\to h_{\mu}(\alpha/2)$ by our assumptions, the sequence $h_{\mu_n}(\alpha/2)$ is bounded. Thus, we have a uniform bound
\[
h_{\nu_n}(t)\leq \frac{C_\alpha}{t}, \quad t\geq\alpha
\]
with a constant $C_\alpha$ independent of $n$. From here we deduce a uniform upper bound for $\Gamma_{\nu_n}g$: 
\begin{equation}
\abs{(\Gamma_{\nu_n}g)(t)}
\leq 
\int_\alpha^\beta h_{\nu_n}(t+s)\abs{g(s)}\dd s
\leq
\int_{\alpha}^\beta \frac{C_\alpha}{t+s}\abs{g(s)}\dd s
\leq 
\frac{C_g}{t+\alpha}, \quad t>0.
\label{eq:ct6}
\end{equation}
Of course, the right hand side here is in $L^2$. 

Next, from the convergence $h_{\mu_n}(t)\to h_{\mu}(t)$ we easily deduce the convergence $h_{\nu_n}(t)\to h_{\nu}(t)$ as $n\to\infty$ locally uniformly on any interval separated away from the origin. From here using \eqref{eq:ct1}, we obtain the pointwise convergence 
\[
(\Gamma_{\nu_n}g)(t)\to(\Gamma_\nu g)(t), \quad n\to\infty
\]
for any $t>0$. By dominated convergence from here and \eqref{eq:ct6} we obtain \eqref{eq:ct2}. 

Since $C^\infty_{\comp,0}(\bbR_+)$ is a common core for $\Gamma_{\mu_n}$ and $\Gamma_\mu$, the strong resolvent convergence $\Gamma_{\mu_n}\to\Gamma_\mu$ follows by Theorem~VIII.25(a) of \cite{RS1}.
\end{proof}

\subsection{The strong convergence of $v_{\mu_n}$}

We first isolate an elementary abstract statement which is probably well-known.
\begin{lemma}
Let $v_n$, $v$ be vectors in a Hilbert space and let $P$ be an orthogonal projection in the same space. Assume that 
\begin{equation}
Pv_n\to v \quad\text{ weakly and}\quad \norm{v_n}\to\norm{v}
\label{eq:ct7}
\end{equation}
as $n\to\infty$. 
Then $\norm{v_n-v}\to0$ as $n\to\infty$. 
\end{lemma}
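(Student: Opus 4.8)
The plan is to reduce the claim to the elementary polarization identity
\[
\norm{v_n-v}^2=\norm{v_n}^2-2\Re\jap{v_n,v}+\norm{v}^2,
\]
and to control the cross term using the weak convergence of $Pv_n$ rather than of $v_n$ itself.

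First I would note that $v$ lies in $\Ran P$. Indeed, $\Ran P$ is a closed subspace of the Hilbert space, hence weakly closed, and the sequence $Pv_n\in\Ran P$ converges weakly to $v$; therefore $Pv=v$. This is really the only step that requires a moment's thought.

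Next, using $Pv=v$ together with the self-adjointness of $P$, I would write $\jap{v_n,v}=\jap{v_n,Pv}=\jap{Pv_n,v}$. Since $Pv_n\to v$ weakly, the right-hand side tends to $\jap{v,v}=\norm{v}^2$, so in particular $\Re\jap{v_n,v}\to\norm{v}^2$ as $n\to\infty$.

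Finally, inserting this together with the hypothesis $\norm{v_n}\to\norm{v}$ into the identity above yields $\norm{v_n-v}^2\to\norm{v}^2-2\norm{v}^2+\norm{v}^2=0$, which is the assertion. There is no genuine obstacle here: once $v\in\Ran P$ has been observed, the argument is just the classical fact that weak convergence combined with convergence of the norms implies strong convergence, applied to the sequence $Pv_n$.
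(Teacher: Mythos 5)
Your proof is correct. Each step checks out: $\Ran P$ is a closed subspace, hence weakly closed, so the weak limit $v$ of $Pv_n$ satisfies $Pv=v$; then $\jap{v_n,v}=\jap{v_n,Pv}=\jap{Pv_n,v}\to\norm{v}^2$, and the expansion $\norm{v_n-v}^2=\norm{v_n}^2-2\Re\jap{v_n,v}+\norm{v}^2$ together with $\norm{v_n}\to\norm{v}$ gives the conclusion. The route differs somewhat from the paper's. The paper argues in two steps: first it upgrades the weak convergence $Pv_n\to v$ to strong convergence via the lower semicontinuity of the norm, using $\norm{v}\le\liminf\norm{Pv_n}\le\liminf\norm{v_n}=\norm{v}$; then it uses the Pythagorean identity $\norm{v_n}^2=\norm{Pv_n}^2+\norm{(I-P)v_n}^2$ to conclude $\norm{(I-P)v_n}\to0$, and adds the two pieces. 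Your argument collapses these two steps into a single polarization computation, at the price of making explicit the observation $Pv=v$, which the paper never needs (it only uses $v_n=Pv_n+(I-P)v_n$). Both proofs are elementary and of comparable length; yours is slightly more direct, while the paper's version makes visible the geometric mechanism, namely that the component of $v_n$ orthogonal to $\Ran P$ carries no norm in the limit, which is perhaps closer to how the lemma is actually used afterwards.
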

\begin{proof}
Let us first prove that $\norm{Pv_n-v}\to0$. We recall that from the weak convergence $Pv_n\to v$ it follows that 
\[
\norm{v}\leq\liminf_{n\to\infty}\norm{Pv_n}
\]
and the convergence is strong if and only if we have equality here. Since $\norm{Pv_n}\leq \norm{v_n}$, using the second assumption in \eqref{eq:ct7} we find 
\[
\norm{v}\leq\liminf_{n\to\infty}\norm{Pv_n}\leq \liminf_{n\to\infty}\norm{v_n}=\norm{v}
\]
and thus we have a chain of equalities here. In particular, the convergence $Pv_n\to v$ is strong. Now write 
\[
\norm{v_n}^2=\norm{Pv_n}^2+\norm{(I-P)v_n}^2.
\]
By the previous step we have $\norm{Pv_n}\to\norm{v}$. It follows that $\norm{(I-P)v_n}\to0$. Thus we obtain the strong convergence $v_n\to v$ as claimed. 
\end{proof}

\begin{lemma}\label{lma.ct2}
Let $\mu_n\to\mu$ weakly in $\calM_\finite$. Then $\norm{v_{\mu_n}- v_\mu}\to0$ as $n\to\infty$. 
\end{lemma}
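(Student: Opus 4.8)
The plan is to deduce the statement from the abstract lemma just proved, applied with $v_n=v_{\mu_n}$, $v=v_\mu$, and $P$ the orthogonal projection of $L^2$ onto $\overline{\Ran\Gamma_\mu}=(\Ker\Gamma_\mu)^\perp$. This requires two things: (a)~$\norm{v_{\mu_n}}\to\norm{v_\mu}$, and (b)~$Pv_{\mu_n}\to v_\mu$ weakly. Point (a) is immediate from \eqref{z9}: $\norm{v_{\mu_n}}^2=\mu_n(\bbR_+)\to\mu(\bbR_+)=\norm{v_\mu}^2$, the convergence being weak convergence of measures applied to the constant test function $1$. It also yields $M:=\sup_n\mu_n(\bbR_+)<\infty$, so the $v_{\mu_n}$ are uniformly bounded in $L^2$ and $h_{\mu_n}(t)\leq M$ for all $t>0$; moreover $h_{\mu_n}(t)\to h_\mu(t)$ for every $t>0$, since $x\mapsto e^{-tx}$ is bounded and continuous on $[0,\infty)$ --- this is exactly the hypothesis of Lemma~\ref{lma.ct1}.

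For (b), since $v_\mu\in\Ran P$ and $P=P^*$, we have $\jap{Pv_{\mu_n},\varphi}=\jap{v_{\mu_n},P\varphi}$ and $\jap{v_\mu,\varphi}=\jap{v_\mu,P\varphi}$ for all $\varphi\in L^2$, so (b) reduces to proving $\jap{v_{\mu_n},\psi}\to\jap{v_\mu,\psi}$ for every $\psi\in\overline{\Ran\Gamma_\mu}$; as the $v_{\mu_n}$ are uniformly bounded, it is enough to verify this for $\psi$ in a dense subset. I would use $\{\Gamma_\mu^{1/2}g:g\in C^\infty_{\comp,0}(\bbR_+)\}$, which is dense in $\overline{\Ran\Gamma_\mu}$: indeed $C^\infty_{\comp,0}(\bbR_+)$ is a core for $\Gamma_\mu$ by \cite[Theorem~2.4(ii)]{PuTreil} and hence a form core, i.e.\ dense in $\Dom\Gamma_\mu^{1/2}$ in the graph norm of $\Gamma_\mu^{1/2}$; since that norm dominates $\norm{\Gamma_\mu^{1/2}\,\cdot\,}$ and $\Ran\Gamma_\mu^{1/2}$ is dense in $(\Ker\Gamma_\mu)^\perp$, the asserted density follows. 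Then, for fixed $g\in C^\infty_{\comp,0}(\bbR_+)$, I would split
\[
\jap{v_{\mu_n},\Gamma_\mu^{1/2}g}=\jap{v_{\mu_n},\Gamma_{\mu_n}^{1/2}g}+\jap{v_{\mu_n},(\Gamma_\mu^{1/2}-\Gamma_{\mu_n}^{1/2})g}.
\]
By \eqref{z12} for $\mu_n$, the first term equals $\int_0^\infty h_{\mu_n}(t)\overline{g(t)}\dd t$, which by dominated convergence (using $h_{\mu_n}\leq M$, $h_{\mu_n}\to h_\mu$ pointwise, and $\supp g$ compact in $\bbR_+$) tends to $\int_0^\infty h_\mu(t)\overline{g(t)}\dd t=\jap{v_\mu,\Gamma_\mu^{1/2}g}$, the last step by \eqref{z12} again. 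The second term is at most $\norm{v_{\mu_n}}\,\norm{(\Gamma_\mu^{1/2}-\Gamma_{\mu_n}^{1/2})g}$, so everything reduces to showing that $\Gamma_{\mu_n}^{1/2}g\to\Gamma_\mu^{1/2}g$ in $L^2$ for each $g\in C^\infty_{\comp,0}(\bbR_+)$.

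For this convergence I would argue as follows. By Lemma~\ref{lma.ct1} we have $\Gamma_{\mu_n}g\to\Gamma_\mu g$ in $L^2$ and $\Gamma_{\mu_n}\to\Gamma_\mu$ in the strong resolvent sense, so $C:=\sup_n\norm{\Gamma_{\mu_n}g}^2<\infty$ and $\norm{\Gamma_\mu g}^2\leq C$. For $R>0$ let $\varphi_R(\lambda)=\min\{\sqrt\lambda,\sqrt R\}$, a bounded continuous function on $[0,\infty)$. For any non-negative self-adjoint operator $A$ with spectral measure $E$ and any $\psi\in\Dom A$, the spectral theorem together with the inequality $(\sqrt\lambda-\sqrt R)^2\leq\lambda$ for $\lambda>R$ gives
\[
\norm{A^{1/2}\psi-\varphi_R(A)\psi}^2=\int_{(R,\infty)}(\sqrt\lambda-\sqrt R)^2\dd\jap{E(\lambda)\psi,\psi}\leq\int_{(R,\infty)}\lambda\dd\jap{E(\lambda)\psi,\psi}\leq\frac1R\norm{A\psi}^2.
\]
Applying this with $A=\Gamma_{\mu_n}$ and with $A=\Gamma_\mu$, $\psi=g$, gives $\norm{\Gamma_{\mu_n}^{1/2}g-\varphi_R(\Gamma_{\mu_n})g}^2\leq C/R$ and $\norm{\Gamma_\mu^{1/2}g-\varphi_R(\Gamma_\mu)g}^2\leq C/R$, uniformly in $n$; and since $\varphi_R$ is bounded and continuous, strong resolvent convergence gives $\varphi_R(\Gamma_{\mu_n})g\to\varphi_R(\Gamma_\mu)g$ in $L^2$ for each fixed $R$. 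A triangle inequality, followed by first letting $n\to\infty$ and then $R\to\infty$, finishes the proof of this convergence, hence of (b), hence of the lemma.

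The step I expect to be the main obstacle is this last one, $\Gamma_{\mu_n}^{1/2}g\to\Gamma_\mu^{1/2}g$: strong resolvent convergence by itself controls only \emph{bounded} functions of the operators, and it is precisely the uniform bound $\sup_n\norm{\Gamma_{\mu_n}g}<\infty$ furnished by \eqref{eq:ct3} that makes it possible to pass from the truncations $\varphi_R$ to the unbounded square root uniformly in $n$.
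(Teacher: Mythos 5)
Your proof is correct, and its overall architecture coincides with the paper's: norm convergence of $v_{\mu_n}$ from \eqref{z9}, weak convergence of $Pv_{\mu_n}$ obtained by pairing against $\Gamma_\mu^{1/2}g$ for $g$ in the core $C^\infty_{\comp,0}(\bbR_+)$ and using \eqref{z12} together with the convergence $\Gamma_{\mu_n}^{1/2}g\to\Gamma_\mu^{1/2}g$, and then the abstract projection lemma to upgrade to strong convergence. The only place where you deviate is the proof of $\Gamma_{\mu_n}^{1/2}g\to\Gamma_\mu^{1/2}g$: the paper deduces it from \eqref{eq:ct3} via the integral representation $\Gamma^{1/2}=\frac1\pi\int_0^\infty(\Gamma+t)^{-1}\Gamma\,t^{-1/2}\dd t$, with the integral converging uniformly in $n$, whereas you use the spectral cutoff $\varphi_R(\lambda)=\min\{\sqrt\lambda,\sqrt R\}$, strong resolvent convergence (so that $\varphi_R(\Gamma_{\mu_n})g\to\varphi_R(\Gamma_\mu)g$), and the tail bound $\norm{A^{1/2}\psi-\varphi_R(A)\psi}^2\leq R^{-1}\norm{A\psi}^2$, which is correct since $(\sqrt\lambda-\sqrt R)^2\leq\lambda\leq\lambda^2/R$ for $\lambda>R$. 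Both arguments hinge on exactly the same input, namely the uniform bound $\sup_n\norm{\Gamma_{\mu_n}g}<\infty$ furnished by \eqref{eq:ct3}; your truncation variant is marginally more elementary (it only needs functional calculus for bounded continuous functions plus the standard strong-resolvent-convergence theorem), while the paper's integral-representation argument handles the square root in one stroke without introducing the cutoff parameter.
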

\begin{proof}
\emph{Step 1:}
Let $f\in C^\infty_{\comp,0}(\bbR_+)$. 
By Lemma~\ref{lma.ct1}, we have the convergence \eqref{eq:ct3}. Let us prove that 
\begin{equation}
\norm{\Gamma_{\mu_n}^{1/2}f-\Gamma_\mu^{1/2}f}\to0, \quad n\to\infty.
\label{eq:ct4}
\end{equation}
This is a standard argument. We use the integral representation 
\[
\Gamma^{1/2}=\frac1\pi\int_0^\infty (\Gamma+t)^{-1}\Gamma \frac{\dd t}{\sqrt{t}}
\]
valid for any positive semi-definite self-adjoint operator $\Gamma$. Applying this to $\Gamma=\Gamma_{\mu_n}$, we find 
\[
\Gamma_{\mu_n}^{1/2}f=\frac1\pi\int_0^\infty (\Gamma_{\mu_n}+t)^{-1}\Gamma_{\mu_n}f \frac{\dd t}{\sqrt{t}}
\]
where the integral converges absolutely and uniformly in $n$ because 
\[
\norm{(\Gamma_{\mu_n}+t)^{-1}\Gamma_{\mu_n}f}\leq \norm{\Gamma_{\mu_n}f}/t 
\]
(we can use this estimate for $t\geq1$) and also 
\[
\norm{(\Gamma_{\mu_n}+t)^{-1}\Gamma_{\mu_n}f}
\leq
\norm{(\Gamma_{\mu_n}+t)^{-1}\Gamma_{\mu_n}}\norm{f}\leq \norm{f}
\]
(we can use this estimate for $0<t<1$).
From here and \eqref{eq:ct3} we obtain \eqref{eq:ct4} by a version of the dominated convergence theorem. 

\emph{Step 2:}
We first note that using \eqref{z9}, 
\begin{equation}
\norm{v_{\mu_n}}^2=\mu_n(\bbR_+)\to\mu(\bbR_+)=\norm{v_{\mu}}^2
\label{eq:ct8}
\end{equation}
as $n\to\infty$. In particular, the norms $\norm{v_{\mu_n}}$ are uniformly bounded. 

Next, for any $f\in C^\infty_{\comp,0}(\bbR_+)$ we have
\[
\int_0^\infty h_{\mu_n}(t) \overline{f(t)}\dd t\to \int_0^\infty h_{\mu}(t) \overline{f(t)}\dd t
\]
as $n\to\infty$. By \eqref{z12}, it follows that 
\[
\jap{v_{\mu_n},\Gamma_{\mu_n}^{1/2}f}\to \jap{v_{\mu},\Gamma_{\mu}^{1/2}f}
\]
as $n\to\infty$. Using Step 1 and the uniform boundedness of $v_{\mu_n}$, from here we find 
\[
\jap{v_{\mu_n},\Gamma_{\mu}^{1/2}f}\to \jap{v_{\mu},\Gamma_{\mu}^{1/2}f}
\]
as $n\to\infty$ for any $f\in C^\infty_{\comp,0}(\bbR_+)$. Since $C^\infty_{\comp,0}(\bbR_+)$ is a core for $\Gamma_\mu$, we obtain that $v_{\mu_n}\to v_\mu$ weakly on a dense set of elements in $\overline{\Ran \Gamma_\mu}$. Let $P$ be an orthogonal projection onto $\overline{\Ran \Gamma_\mu}$. We obtain that 
\[
Pv_{\mu_n}\to Pv_\mu=v_\mu
\]
weakly as $n\to\infty$. Combining this with \eqref{eq:ct8} and using the previous lemma, we obtain the strong convergence $v_{\mu_n}\to v_\mu$. 
\end{proof}

\subsection{Proof of Theorem~\ref{thm.z9}}
Let $\sigma=\Omega(\mu)$ and $\sigma_n=\Omega(\mu_n)$, and let $\varphi$ be any bounded continuous function on $[0,\infty)$. By Lemma~\ref{lma.ct1}, we have the strong resolvent convergence $\Gamma_{\mu_n}\to\Gamma_\mu$. By \cite[Theorem VIII.20]{RS1} it follows that 
\[
\norm{\varphi(\Gamma_{\mu_n})v_\mu-\varphi(\Gamma_{\mu})v_\mu}\to0
\]
as $n\to\infty$.
Using this and Lemma~\ref{lma.ct2}, we find that 
\[
\jap{\varphi(\Gamma_{\mu_n})v_{\mu_n},v_{\mu_n}}
\to
\jap{\varphi(\Gamma_\mu)v_\mu,v_\mu}
\]
as $n\to\infty$. By the definition of the map $\Omega$, this means
\[
\int_0^\infty \varphi(x)\dd\sigma_n(x)\to\int_0^\infty \varphi(x)\dd\sigma(x)
\]
as $n\to\infty$. Thus, $\sigma_n\to\sigma$ weakly in $\calM_\finite$. The proof of Theorem~\ref{thm.z9} is complete.
\qed

\section{Examples}\label{sec.exa}
Here we give two examples of Hankel operators with spectral measures that can be computed explicitly. In the first example the measure $\mu$ is finite and in the second one it is co-finite.

\subsection{The Mehler operator}
Consider the Hankel operator with the kernel function
\[
h_\mu(t)=\frac1{t+2}=\int_0^\infty e^{-t x}e^{-2x}\dd x,
\]
corresponding to the measure $\dd\mu(x)=e^{-2x}\dd x$. Clearly, $\mu\in\calM_\finite$. 
This operator has been diagonalised by Mehler \cite[formula (16)]{Mehler} in 1881. He proved that the ``generalised eigenfunctions'' $P_{-\frac12+ik}(1+t)$ (here $P$ is the Legendre function) satisfy the eigenvalue equation
\[
\int_0^\infty\frac{P_{-\frac12+ik}(1+s)}{2+t+s}\dd s
=\pi\sech(\pi k)P_{-\frac12+ik}(1+t), \quad t>0,\quad k>0,
\]
where $\sech(x)=1/\cosh(x)$. 
Following Yafaev \cite{Ya-commutator}, we will call $\Gamma_\mu$ \emph{Mehler's operator}. 
More precisely, it can be shown \cite{Ya-commutator} that the operator
\begin{align*}
\Phi: L^2(\bbR_+)&\to L^2(\bbR_+)
\\
(\Phi f)(k)&=\sqrt{k\tanh \pi k}\int_0^\infty P_{-\frac12+ik}(t+1)f(t)\dd t, 
\end{align*}
is unitary and intertwines Mehler's operator $\Gamma_\mu$ with the operator of multiplication by 
$\pi\sech(\pi k)$: 
\[
(\Phi\Gamma_\mu f)(k)=\pi\sech(\pi k)(\Phi f)(k), \quad k>0.
\]
In order to determine the corresponding spectral measure $\sigma$, we proceed from formula \eqref{z11}: 
\[
\int_0^\infty \varphi(\lambda)\dd\sigma(\lambda)
=\jap{\varphi(\Gamma_\mu)\Gamma_\mu\delta_0,\delta_0}
=\int_0^\infty \varphi(\pi\sech \pi k)\pi \sech\pi k \abs{(\Phi\delta_0)(k)}^2\dd k.
\]
Observe that 
\[
(\Phi\delta_0)(k)=
\sqrt{k\tanh \pi k}\,  P_{-\frac12+ik}(1)=\sqrt{k\tanh \pi k},
\]
because $P_\nu(1)=1$ for all $\nu$.
Thus, we find
\[
\int_0^\infty \varphi(\lambda)\dd\sigma(\lambda)
= \int_0^\infty \varphi(\pi\sech \pi k)(\pi \sech\pi k \tanh \pi k) k \dd k.
\]
Changing the integration variable to $\lambda=\pi \sech\pi k$, we find that 
\[
\int_0^\infty \varphi(\lambda)\dd\sigma(\lambda)
=
\frac1{\pi^2}\int_0^{\pi}\varphi(\lambda)\sech^{-1}(\lambda/\pi)\dd\lambda,
\]
and so finally the spectral measure $\dd\sigma$ is supported on $[0,\pi]$, with 
\[
\dd\sigma(\lambda)=\frac1{\pi^2}\sech^{-1}(\lambda/\pi)\dd\lambda.
\]

From here, using our main result Theorem~\ref{thm.z2}, we get the following interesting corollary. Consider the Hankel operator $\Gamma_\sigma$ with the kernel function 
\[
h_\sigma(t)=\int_0^\infty e^{-t\lambda}\dd\sigma(\lambda)=\frac1\pi\int_0^1e^{-\pi t\lambda}\sech^{-1}\lambda\dd\lambda. 
\]
Then this operator has the spectral measure $\dd\mu(\lambda)=e^{-2\lambda}\dd\lambda$. In particular, the spectrum of $\Gamma_\sigma$ is simple, purely absolutely continuous and coincides with the interval $[0,\infty)$.

We do not know how to express the kernel $h_\sigma$ in terms of known special functions, nor are we aware of the operator $\Gamma_\sigma$ appearing in the literature. A calculation shows that
\[
h_\sigma(t)=\frac1{\pi^2}\frac{\log t}{t}(1+o(1)), \quad t\to\infty,
\]
and in particular the estimate $h_\sigma(t)\leq C/t$ fails at infinity, in agreement with the fact that $\Gamma_\sigma$ is unbounded.

\subsection{The Rosenblum operator}
Consider the Hankel operator with the kernel function
\[
h_\mu(t)=e^{-t/2}/t=\int_{1/2}^\infty e^{-tx}\dd x,
\]
corresponding to the measure $\dd\mu(x)=\chi_{(1/2,\infty)}(x)\dd x$. Clearly, $\mu\in\calM_\cofinite$. 
This operator has been diagonalised by Rosenblum \cite{Ros}, see also \cite{L1,L2,Ya-commutator}

Let $K_\nu$ be the modified Bessel function of the third kind \cite[Section 7.2.2]{BE}. It can be shown that the operator 
\begin{align*}
\Phi: L^2(\bbR_+)&\to L^2(\bbR_+)
\\
(\Phi f)(k)&=\frac1\pi \sqrt{2k\sinh(\pi k)}\int_0^\infty t^{-1/2}K_{ik}(t/2)f(t)\dd t,\quad k>0,
\end{align*}
is unitary and intertwines Rosenblum's operator $\Gamma_\mu$ with the operator of multiplication by $\pi\sech(\pi k)$. In order to determine the corresponding measure $\rho$, we proceed from formula \eqref{z11a}: 
\[
\int_0^\infty \varphi(\lambda)\dd\rho(\lambda)
=\jap{\varphi(\Gamma_\mu)\Gamma_\mu\1,\1}
=\int_0^\infty \varphi(\pi\sech \pi k)\pi \sech\pi k \abs{(\Phi \1)(k)}^2\dd k.
\]
Using \cite[6.561(16)]{GR}
\begin{align*}
(\Phi \1)(k)=\frac1\pi\sqrt{2k\sinh(\pi k)}\int_0^\infty K_{ik}(t/2)t^{-1/2}\dd t
=
\frac1{2\pi}\sqrt{2k\sinh(\pi k)}\Abs{\Gamma\left(\tfrac14+i\tfrac{k}{2}\right)}^2.
\end{align*}
Thus, we find 
\[
\int_0^\infty \varphi(\lambda)\dd\rho(\lambda)
=
\frac1{2\pi}\int_0^\infty \varphi(\pi\sech \pi k)k\tanh(\pi k)\Abs{\Gamma\left(\tfrac14+i\tfrac{k}{2}\right)}^4\dd k.
\]
Changing the integration variable to $\lambda=\pi \sech\pi k$, we find that 
\[
\int_0^\infty \varphi(\lambda)\dd\rho(\lambda)
=
\frac1{2\pi^2}\int_0^{\pi}\varphi(\lambda)\frac{k(\lambda)}{\lambda}\Abs{\Gamma\left(\tfrac14+i\tfrac{k(\lambda)}{2}\right)}^4
\dd\lambda,
\]
where 
\[
k(\lambda)=\frac1\pi \sech^{-1}(\lambda/\pi). 
\]
Thus, $\rho$ is supported on $[0,\pi]$ with 
\[
\dd\rho(\lambda)=
\frac1{2\pi^2}\frac{k(\lambda)}{\lambda}\Abs{\Gamma\left(\tfrac14+i\tfrac{k(\lambda)}{2}\right)}^4
\dd\lambda.
\]

\appendix

\section{Background in control theory }\label{sec.aa5}
The ideas behind the proof of the main Theorem~\ref{thm.z2} originate from control theory. 
The link between control theory and the spectral theory of Hankel operators was first used by 
R.~Ober \cite{Ober1,Ober2} and further developed in \cite{Treil,MPT}.

Here we explain the control theory context. 

\subsection{Dynamical systems and balanced realizations of non-negative Hankel 
operators}\label{s:BalReal}
Consider a one-dimensional dynamical system $(A, b, c)$
\begin{align*}
\left\{ 
\begin{array}{ll}
x'(t)&= -A x(t) + u(t) b\\
y(t)&= \jap{x(t), c}\ci\calH 
\end{array}
\right.
\end{align*}
with the input $u$ and output $y$. Here $A$ is a bounded operator in a Hilbert space $\calH$, 
$x=x(t)$ is an $\calH$-valued function, $b$ and $c$ are vectors in $\calH$ and $u$, $y$ are scalar-valued functions. The initial condition $x(0)=0$ is usually imposed.

The function 
\begin{align}\label{e:real01}
h(t):= \langle e^{-tA}b,c \rangle\ci{\calH}, \qquad t\ge 0,
\end{align}
is called the \emph{impulse response} of the system $(A, b, c)$; formally this is the output $y$ for input $u$ being the $\delta$-function at $0$. The Laplace transform of $h$ is called the \emph{transfer function} of the system.  

Given a function $h=h(t)$ we say that a system $(A, b, c)$ is a \emph{realization} of $h$ if 
\eqref{e:real01} holds. In this paper $h$ is the kernel of an integral Hankel operator $\Gamma_h$, and in this case we say that $(A, b, c)$ is a \emph{realization} of the Hankel operator $\Gamma_h$. 

A system $(A, b, c)$ is called \emph{controllable} if $b$ is a cyclic vector for $A$ and 
\emph{observable} if $c$ is a cyclic vector for $A^*$. A system that is both observable and 
controllable is called \emph{minimal}.

Let us also assume that  the  controllability and observability Gramians $W\ti{c}$ and $W\ti{o}$ exist. This means that the integrals
\begin{align*}
W\ti{c} &:= \int_0^\infty e^{-tA} \left(\jap{\bigcdot,b}b\right) e^{-tA^*}  \dd t= \int_0^\infty 
\jap{\bigcdot,e^{-tA}b}e^{-tA}b\,  \dd t, \\  
W\ti{o} &:= \int_0^\infty e^{-tA^*} \left(\jap{\bigcdot,c}c\right) e^{-tA}  \dd t =  \int_0^\infty 
\jap{\bigcdot,e^{-tA^*}c}e^{-tA^*}c \, \dd t, 
\end{align*}
converge in the weak operator topology to bounded operators in $\calH$. It is easy to see that in this case the system is controllable if and only if $\Ker W\ti{c}=\{0\}$, 
and it is observable if and only if $\Ker W\ti{o}=\{0\}$.   

The system is called \emph{balanced} if 
it is minimal and $W\ti{c}=W\ti{o}$; in this case we will write simply $W$ for these Gramians. 

It was shown in \cite{MPT} that 
\begin{enumerate}
\item For a balanced system $(A,b,c)$ 
and  $h$ given by \eqref{e:real01},  the modulus 
$|\Gamma_h|:=(\Gamma_h^*\Gamma_h)^{1/2}$ of the corresponding bounded Hankel operator $\Gamma_h$, restricted to $(\Ker\Gamma_h)^\perp$, is unitarily equivalent to $W$.  

\item If $\Gamma_h = \Gamma_h^* \ge 0$ and $(A, b, c)$ is its 
balanced realization, then $A=A^*>0$ and $b=c$; for the rest of this Appendix we  restrict 
ourselves to this case. 

\item  If the Gramian
\begin{align}\label{e:Gram02}
W= \int_0^\infty \jap{\bigcdot,e^{-tA}b}e^{-tA}b\,  \dd t,
\end{align}
of a system (triple) $(A, b, b)$ with $A=A^*>0$ exists (i.e.~if the integral converges in the weak operator topology to a bounded operator), then $W$ is the unique solution of the Lyapunov equation
\begin{align}\label{e:Lyap03}
AW+WA=\jap{\bigcdot,b}b.   
\end{align}  
\item If bounded operators $A=A^*>0$ and $W$ satisfy \eqref{e:Lyap03}, then the integral on the right hand side of \eqref{e:Gram02} converges in the weak operator topology and $W$ coincides with this integral. This means, in particular, that in this case $W$ is the unique solution of the Lyapunov equation \eqref{e:Lyap03}. This also implies that if $\Ker W=\{0\}$, then $b$ must be a cyclic vector for $A$. 

Note that here one does not assume anything about Hankel operators or balanced realizations.  We are just given operators $A$, $W$, and the vector $b$; it might happen that $\Ker W\ne\{0\}$, so the system is not balanced because it is not minimal. But if 
$\Ker W=\{0\}$,  the system is a balanced realization of some non-negative Hankel operator.

\end{enumerate}

We should mention that $A$ and $W$ enter the above Lyapunov equation \eqref{e:Lyap03} 
symmetrically. This implies that if the kernel of one of the operators $A$, $W$ is trivial, then the vector $b$ is cyclic for the other one. This symmetry was exploited 
in \cite{MPT} to show that the spectrum of any non-negative integral Hankel operator $\Gamma_h$ admitting a balanced realization with bounded generator $A$ is simple. (As a consequence, non-negative Hankel operators with multiple spectrum do not admit balanced realizations; example: the Carleman operator.) This symmetry also plays an important role in this paper.

\subsection{Extension to the unbounded operators}
In this paper we extend the above reasoning to the case of unbounded generators $A$ and unbounded Gramians $W$. 

First we notice that while for general measures $\mu\in\calM$  the function $\1_\mu$ might fail to 
be in $L^2(\mu)$, we see that for operator  $X_\mu$ defined in Section~\ref{sec.c}, the element 
$e^{-tX_\mu}\1_\mu \in L^2(\mu)$,  so we can write 
\begin{align*}
h_\mu(t)= \jap{e^{-tX_\mu}\1_\mu, \1_\mu}_\mu = \jap{e^{-tX_\mu/2}\1_\mu, 
e^{-tX_\mu/2}\1_\mu}_\mu . 
\end{align*}
Thus we can say that the system $(X_\mu, \1_\mu, 
\1_\mu)$ is a realization of a Hankel operator $\Gamma_\mu$ (i.e.~of the function $h=h_\mu$). 
Moreover, the identity \eqref{c3a}, appropriately interpreted, means that $G_\mu$ is the Gramian of 
$(X_\mu, \1_\mu, \1_\mu)$ and since  $\Ker G_\mu=\{0\}$, we can 
say that $(X_\mu, \1_\mu, \1_\mu)$ is a \emph{balanced} realization. 
Thus we can say that any non-negative 
Hankel operator $\Gamma_\mu$ admits a  balanced representation, if all objects are correctly 
interpreted. 

The operators $X_\mu$ and $G_\mu$ defined in Section~\ref{sec.c} satisfy 
(at least formally) the Lyapunov equation 
\begin{align}\label{e:Lyap04}
X_\mu G_\mu + G_\mu X_\mu = \jap{\bigcdot, \1_\mu}\1_\mu  .  
\end{align}
For measures $\mu\in\calM_\finite$ we gave rigorous interpretation of the equation 
\eqref{e:Lyap04}, and proved in  Section \ref{sec.c}  that in this case $G_\mu$ is 
the Gramian of the system $(X_\mu, \1_\mu, \1_\mu)$ and $X_\mu$ is the Gramian of the system 
$(G_\mu, \1_\mu, \1_\mu)$, see Lemma \ref{lma.c4}.  

We should mention that while formally $X_\mu$ and $G_\mu$  participate symmetrically in the 
Lyapunov equation \eqref{e:Lyap04}, there essential asymmetry between these operators, even for 
$\mu\in\calM_{\finite}$. 
Namely, the fact that  $G_\mu$ is the Gramian of the system $(X_\mu, \1_\mu, \1_\mu)$, i.e.~the identity \eqref{c3a}, is simply 
a restatement of the identity $G_\mu=L_\mu^* L_\mu$. But the fact that for $\mu\in\calM_{\finite}$ the operator $X_\mu$ is 
the Gramian of the system $(G_\mu, \1_\mu, \1_\mu)$, i.e.~identity \eqref{c4a}, is highly non-trivial, and its proof requires some hard work, see Section 
\ref{sec.c}.  
And since trivially $\Ker X_\mu=\{0\}$, this identity implies that $\1_\mu$ is a cyclic vector for  $G_\mu$, which is at the crux of our construction.

\subsection{Asymmetry between $X_\mu$ and $G_\mu$ for general measures}
The above mentioned asymmetry between $X_\mu$ and $G_\mu$ is even more dramatic in the case of general measures $\mu$ (i.e. not finite or co-finite). Namely, for an arbitrary measure  $\mu\in\calM$   the operator $G_\mu$ is still the Gramian of the system $(X_\mu, \1_\mu, \1_\mu)$, i.e.~the identity \eqref{c3a} still holds as 
an identity for the quadratic form on an appropriate dense set; as before, it is just a restatement of the identity  $G_\mu=L_\mu^* L_\mu$.

However for the general measures $\mu\in\calM$  the operator $G_\mu$ can have spectrum of
multiplicity $2$. The simplest example is again the Carleman operator, i.e.~the Hankel operator
$\Gamma_\mu$  where the measure $\mu$ is the Lebesgue measure $\dd x$ on $\bbR_+$; this operator
has the spectrum of multiplicity $2$, and the corresponding operator $G_\mu$ coincides with the
Carleman operator itself.

But if $G_\mu$ does not have a simple spectrum, the operator  $X_\mu$ cannot be the Gramian of the system $(G_\mu, \1_\mu, \1_\mu)$ in any reasonable sense.  

Indeed, as we discussed above,  $e^{-tG_\mu}\1_\mu \in L^2(\mu)$ for all $t>0$, so the left hand 
side of \eqref{c4a} is well defined. Thus, if  
the identity \eqref{c4a} holds for a reasonable class of test functions $f$, the fact that $\Ker 
X_\mu=\{0\}$ would imply that  $\Span\{e^{-tG_\mu}\1_\mu:t>0\}$ is dense in $L^2(\mu)$, i.e.~that the operator $G_\mu$ has a simple spectrum. 
This contradicts our initial assumption, so $X_\mu$ cannot be the Gramian of the system $(G_\mu, \1_\mu, \1_\mu)$.


\begin{thebibliography}{28}


\bibitem{A}
N.~I.~Akhiezer, 
\emph{The classical moment problem and some related questions in analysis}, vol.~82 of 
\emph{Classics in Applied Mathematics}.
Society for Industrial and Applied Mathematics (SIAM), Philadelphia, PA, 2021.

\bibitem{Bauer}
H.~Bauer, 
\emph{Measure and integration theory},
De Gruyter, Berlin, 2001.

\bibitem{BE}
A. Erdelyi et al., 
\emph{Higher Transcendental Functions,} 
Bateman Manuscript Project, Vol. 2,
McGraw--Hill, New York, 1953.


\bibitem{GP}
P.~G\'erard, A.~Pushnitski,
\emph{An inverse problem for Hankel operators and turbulent solutions of the cubic Szeg\H{o} equation on the line,}
J. Eur. Math. Soc. (JEMS) \textbf{27}, no.~11 (2025), 4591--4648.

\bibitem{GS}
F.~Gesztesy, B.~Simon, 
\emph{Rank one perturbations at infinite coupling,} 
J. Funct. Anal. \textbf{128} (1995), 245--252.

\bibitem{GR}
I.~S.~Gradshteyn, I.~M.~Ryzhik,
\emph{Table of Integrals, Series, and Products,}
Seventh Edition, Academic Press 2007.

\bibitem{L1}
N.~N.~Lebedev, 
\emph{Some singular integral equations connected with integral representations of mathematical physics,} (Russian) Doklady Akad. Nauk SSSR (N.S.) \textbf{65} (1949), 621--624.

\bibitem{L2}
N.~N.~Lebedev, 
\emph{The analogue of Parseval’s theorem for a certain integral transform,} (Russian) 
Doklady Akad. Nauk SSSR (N.S.) \textbf{68} (1949), 653--656.


\bibitem{MPT}
A.~V.~Megretski\u{\i}, V.~V.~Peller, S.~R.~Treil', 
\emph{The inverse spectral problem for self-adjoint Hankel operators,}
Acta Math. \textbf{174} (1995), no. 2, 241--309.

\bibitem{Mehler}
F.~G.~Mehler, 
\emph{Ueber eine mit den Kugel- und Cylinderfunctionen verwandte Function und ihre Anwendung in der Theorie der Elektricit\"atsvertheilung,}
Math. Ann. \textbf{18}, no. 2 (1881), 161--194. 

\bibitem{Ober1}
R.~J.~Ober, 
\emph{A note on a system theoretic approach to a conjecture by Peller-Khrushchev},
Systems Control Lett. \textbf{8} (1987), no. 4, 303--306.

\bibitem{Ober2}
R.~J.~Ober, 
\emph{A note on a system-theoretic approach to a conjecture by Peller-Khrushchev: the general case}, 
IMA J. Math. Control Inform. \textbf{7} (1990), no. 1, 35--45.



\bibitem{Peller}
V.~V~Peller, 
\emph{Hankel operators and their applications,}
Springer, New York, 2003.

\bibitem{Pocovnicu}
O.~Pocovnicu, 
\emph{Explicit formula for the solution of the Szeg\H{o} equation on the real line and applications,}
Discrete Contin. Dyn. Syst. \textbf{31} (2011), no. 3, 607--649.

\bibitem{PuTreil}
A.~Pushnitski, S.~Treil, 
\emph{Unbounded integral Hankel operators,}
Funct. Anal. Appl. \textbf{59} (2025), no. 3, 297--320.


\bibitem{RS1}
M.~Reed, B.~Simon, 
\emph{Methods of modern mathematical physics. I. Functional analysis.} 
Second edition. Academic Press, New York, 1980.



\bibitem{Ros}
M.~Rosenblum, 
\emph{On the Hilbert matrix, I, II,} 
Proc. Amer. Math. Soc., 9 (1958), 137--140, 581--585.

\bibitem{Treil} 
S.~R.~Treil, 
\emph{An inverse spectral problem for the modulus of the Hankel operator, and balanced realizations}, Leningrad Math. J. \textbf{2} (1991), no. 2, 353--375.

\bibitem{VasTr}
S.~R.~Treil and V.~I.~Vasyunin, 
\emph{The inverse spectral problem for the modulus of a {H}ankel operator}, 
Leningrad Math.\ J.,  \textbf{1} (1990), no.\ 4, 859--870.

\bibitem{Ya1}
D.~Yafaev,
\emph{Quasi-Carleman operators and their spectral properties,}
Integr. Equ. Oper. Theory \textbf{81} (2015), 499--534.

\bibitem{Ya2}
D.~Yafaev,
\emph{Quasi-diagonalization of Hankel operators,}
Journal d'Analyse Mathematique, \textbf{133} (2017), 133--182.

\bibitem{Ya-commutator}
D.~Yafaev,
\emph{A commutator method for the diagonalization of Hankel operators,}
Funct. Anal. Appl. \textbf{44} no. 4 (2010), 295--306.




\end{thebibliography}
\end{document}